\newcommand\version{July 1, 2017}
\newtheorem{theorem}{Theorem}
\newtheorem{proposition}[theorem]{Proposition}
\newtheorem{lemma}[theorem]{Lemma}
\theoremstyle{definition}
\theoremstyle{remark}
\renewcommand{\epsilon}{\varepsilon}
\renewcommand{\phi}{\varphi}
\newcommand{\R}{\mathbb{R}}
\DeclareMathOperator{\dist}{dist}
\DeclareMathOperator{\supp}{supp}
\newcommand{\ud}{\mathrm{d}}
\newcommand{\be}{\begin{equation}}      
\newcommand{\ee}{\end{equation}}
\newcommand{\lda}{\lambda}
\newcommand{\om}{\Omega}
\newcommand{\va}{\varepsilon}
\begin{document}

\title[Minimizers for the fractional Sobolev inequality --- \version]{Minimizers for the fractional Sobolev inequality\\ on domains}

\author{Rupert L. Frank}
\address[R. Frank]{Mathematisches Institut, Ludwig-Maximilans Universit\"at M\"unchen, Theresienstr. 39, 80333 M\"unchen, Germany, and Department of Mathematics, California Institute of Technology, Pasadena, CA 91125, USA}
\email{frank@math.lmu.de, rlfrank@caltech.edu}

\author{Tianling Jin}
\address[T. Jin]{Department of Mathematics, The Hong Kong University of Science and Technology, Clear Water Bay, Kowloon, Hong Kong}
\email{tianlingjin@ust.hk}

\author{Jingang Xiong}
\address[J. Xiong]{School of Mathematical Sciences, Beijing Normal University, Beijing 100875, China}
\email{jx@bnu.edu.cn}

\begin{abstract}
We consider a version of the fractional Sobolev inequality in domains and study whether the best constant in this inequality is attained. For the half-space and a large class of bounded domains we show that a minimizer exists, which is in contrast to the classical Sobolev inequalities in domains.
\end{abstract}

\maketitle

\renewcommand{\thefootnote}{${}$} \footnotetext{\copyright\, 2017 by
  the authors. This paper may be reproduced, in its entirety, for
  non-commercial purposes.}

\section{Introduction}

The fractional Sobolev inequality in $\R^n$ of order $\sigma\in (0,1)$ (with the additional assumption $\sigma<1/2$ if $n=1$) states that any function in $\mathring H^\sigma(\R^n)$ belongs to $L^\frac{2n}{n-2\sigma}(\R^n)$ and its norm in this space is controlled by its norm in $\mathring H^\sigma(\R^n)$. Here $\mathring H^\sigma(\R^n)$ denotes the space of all (real-valued) functions $u$ on $\R^n$ such that
$$
I_{n,\sigma,\R^n}[u] := \iint_{\R^n\times\R^n} \frac{(u(x)-u(y))^2}{|x-y|^{n+2\sigma}} \,\ud x\,\ud y
$$
is finite and such that $|\{ |u|>\tau\}|<\infty$ for all $\tau>0$. The best constant in this fractional Sobolev inequality, that is,
$$
S_{n,\sigma}(\R^n) := \inf_{0\not\equiv u \in\mathring H^\sigma(\R^n)} \frac{I_{n,\sigma,\R^n}[u]}{\left( \int_{\R^n} |u|^\frac{2n}{n-2\sigma}\,\ud x \right)^\frac{n-2\sigma}{n}} \,,
$$
was found by Lieb \cite{Lieb}, who also showed that this infimum is attained exactly by multiples, translates and dilates of the function $(1+|x|^2)^{-\frac{n-2\sigma}{2}}$. In fact, Lieb considered the dual version of the inequality, known as Hardy--Littlewood--Sobolev inequality, and proved a more general result. Alternative proofs of this result were later given in \cite{CaLo,FrLi1,FrLi2,FrLi3}. Lions \cite{Lions} proved that any normalized minimizing sequence for the optimization problem $S_{n,\sigma}(\R^n)$ is relatively compact up to translations and dilations. Interestingly, although not really relevant for us here, one can show \cite{ChLiOu,Li} that translates and dilates of the function $(1+|x|^2)^{-\frac{n-2\sigma}{2}}$ are the only positive solutions of the Euler--Lagrange equation corresponding to the minimization problem.

In this paper we are interested in the fractional Sobolev inequality on the half-space
$$
\R^n_+ = \{ (x',x_n)\in \R^{n-1}\times\R :\ x_n >0\}
$$
for functions vanishing on the boundary. Since the quadratic form $I_{n,\sigma,\R^n}$ is non-local, there are at least two natural ways of formulating such an inequality. The first one is to consider the minimization problem
$$
S_{n,\sigma}'(\R^n_+) := \inf\left\{ \frac{I_{n,\sigma,\R^n}[u]}{\left( \int_{\R^n} |u|^\frac{2n}{n-2\sigma}\,\ud x \right)^\frac{n-2\sigma}{n}} :\ 0\not\equiv u \in \mathring H^\sigma(\R^n) \,,\ \supp u \subset\R^n_+\right\}.
$$
Then, clearly, $S_{n,\sigma}'(\R^n_+) \geq S_{n,\sigma}(\R^n)$ and, in fact, using the dilation or translation invariance of the whole space problem, it is not difficult to see that
$$
S_{n,\sigma}'(\R^n_+) = S_{n,\sigma}(\R^n) \,.
$$
Moreover, by Lieb's classification result mentioned above, minimizers for $S_{n,\sigma}(\R^n)$ do not vanish on a half-space and therefore the infimum $S_{n,\sigma}'(\R^n_+)$ is not attained.

The second and more interesting way of formulating the problem consists in replacing $I_{n,\sigma,\R^n}$ by
$$
I_{n,\sigma,\R^n_+}[u] := \iint_{\R^n_+\times\R^n_+} \frac{(u(x)-u(y))^2}{|x-y|^{n+2\sigma}} \,\ud x\,\ud y
$$
and to define $\mathring H^\sigma(\R^n_+)$ as the completion of $C^1_c(\R^n_+)$ with respect to the quadratic form $I_{n,\sigma,\R^n_+}$. Then it is easy to see \cite[Lemma 2.1]{DyFr} that, assuming again $0<\sigma<1/2$ for $n=1$ and $0<\sigma<1$ for $n\geq 2$, any function in $\mathring H^\sigma(\R^n_+)$ belongs to $L^\frac{2n}{n-2\sigma}(\R^n_+)$ and its norm in this space is controlled by its norm in $\mathring H^\sigma(\R^n_+)$. One is naturally led to the minimization problem
$$
S_{n,\sigma}(\R^n_+) := \inf_{0\not\equiv u \in\mathring H^\sigma(\R^n_+)} \frac{I_{n,\sigma,\R^n_+}[u]}{\left( \int_{\R^n_+} |u|^\frac{2n}{n-2\sigma}\,\ud x \right)^\frac{n-2\sigma}{n}} \,.
$$
The following two theorems, which are our main results, show that under the condition $n\geq 4\sigma$ the minimization problem $S_{n,\sigma}(\R^n_+)$ behaves completely differently from the minimization problem $S_{n,\sigma}'(\R^n_+)$.

\begin{theorem}\label{mainhsbinding}
Let $0<\sigma<1/2$ if $n=1$ and $0<\sigma<1$ if $n\geq 2$ and assume that
\begin{equation*}
n\geq 4\sigma \,.
\end{equation*}
Then
\begin{equation}
\label{eq:bindingineq}
S_{n,\sigma}(\R^n_+) < S_{n,\sigma}(\R^n) \,.
\end{equation}
\end{theorem}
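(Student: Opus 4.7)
The plan is to exhibit a trial function $v \in \mathring H^\sigma(\R^n_+)$ whose Rayleigh quotient lies strictly below $S_{n,\sigma}(\R^n)$. The natural candidate is a translate of Lieb's extremal pushed deep inside the half-space: setting $U(x) = (1+|x|^2)^{-(n-2\sigma)/2}$, I take $v_R(x) := U(x - R e_n)$ with $R \gg 1$, regarded as an element of $\mathring H^\sigma(\R^n_+)$. For $\sigma \geq 1/2$ this requires a mild mollification: replace $v_R$ by $\chi_\epsilon(x_n)\,v_R(x)$ for a smooth cutoff $\chi_\epsilon$ that vanishes at $x_n=0$, and pass to the limit $\epsilon\to 0$; the correction to the Rayleigh quotient is $o(1)$ and does not interfere with the asymptotics below.

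Translating coordinates so that the bubble is centered at the origin and the boundary hyperplane becomes $\{z_n = -R\}$, one writes
\[
I_{n,\sigma,\R^n_+}[v_R] = I_{n,\sigma,\R^n}[U] - L_R, \qquad \|v_R\|_{L^p(\R^n_+)}^p = \|U\|_p^p - M_R,
\]
with $p = 2n/(n-2\sigma)$, $M_R = \int_{\{z_n<-R\}} U^p$, and $L_R$ the part of the double integral for $I_{n,\sigma,\R^n}[U]$ lost by restricting to $\{z_n>-R\}^2$. A first-order Taylor expansion in $M_R/\|U\|_p^p$ reduces $Q(v_R) < S_{n,\sigma}(\R^n)$ to the requirement $L_R/M_R \to \infty$ as $R\to\infty$.

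A direct computation with the explicit $U$ yields $M_R = O(R^{-n})$. For the lower bound on $L_R$ I use $L_R \geq 2 A_R$, where
\[
A_R := \iint_{\{z_n > -R,\ w_n < -R\}} \frac{(U(z)-U(w))^2}{|z-w|^{n+2\sigma}}\,dz\,dw.
\]
Restricting $z$ to a fixed compact subset $K \subset \R^n$ and $w$ to $\{w_n < -R,\ |w| > 2R\}$, one has $U(w)\leq U(z)/2$ and $|z-w|\asymp|w|$, so $(U(z)-U(w))^2 \geq U(z)^2/4$, and an elementary scaling computation gives $A_R \geq c R^{-2\sigma}\int_K U^2 \geq c' R^{-2\sigma}$. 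Because $n \geq 4\sigma > 2\sigma$, we deduce $L_R/M_R \geq c'' R^{n-2\sigma} \to \infty$, which proves $Q(v_R) < S_{n,\sigma}(\R^n)$ for all $R$ sufficiently large.

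\textbf{Main obstacle.} The delicate step is the sharp asymptotic analysis of $L_R$. Expanding the square $(U(z)-U(w))^2 = U(z)^2 - 2 U(z)U(w) + U(w)^2$ produces individual integrals with non-integrable singularities straddling the hyperplane $\{z_n=-R\}$ (especially when $\sigma \geq 1/2$), and the correct leading-order behaviour emerges only through careful cancellation. The dimension hypothesis $n \geq 4\sigma$ enters precisely here: it ensures the (borderline) integrability of $U^2$ over the bulk, so that the principal contribution to $A_R$ is genuinely of order $R^{-2\sigma}$ (with a logarithmic correction when $n = 4\sigma$). Density of $C^1_c(\R^n_+)$ functions in the quadratic form and the justification of the first-order Taylor expansion are then routine.
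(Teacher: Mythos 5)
Your trial function is, up to translation and dilation, the same one the paper uses: a translated bubble $U(\cdot-Re_n)$ cut off away from the boundary. The argument runs into a genuine gap, however, precisely in the ``mild mollification'' step, and the gap is exactly where the hypothesis $n\geq4\sigma$ must enter.

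The problem is your cutoff $\chi_\epsilon(x_n)$ for $\sigma\geq1/2$. Since $v_R(\cdot,0)$ does not vanish, $v_R|_{\R^n_+}\notin\mathring H^\sigma(\R^n_+)$ for $\sigma\geq1/2$, and the limit $\epsilon\to 0$ you invoke does \emph{not} reproduce $v_R$: for $\sigma>1/2$ the fractional Hardy inequality of \cite{BoDy} (see also the identity $I_{n,\sigma,\R^n_+}[u]=I_{n,\sigma,\R^n}[u]-\kappa_{n,\sigma}\int u^2x_n^{-2\sigma}\,dx$) gives
$$
I_{n,\sigma,\R^n_+}[\chi_\epsilon v_R]\;\gtrsim\;\int_{\{\epsilon<x_n<1\}}\frac{v_R^2}{x_n^{2\sigma}}\,dx\;\gtrsim_{R}\;\epsilon^{1-2\sigma}\;\longrightarrow\;\infty
\qquad(\epsilon\to 0,\ R\ \text{fixed}),
$$
because $v_R$ is bounded away from zero on $\{x_n<1\}$. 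So ``the correction is $o(1)$'' is false; the numerator blows up, and your Rayleigh-quotient comparison never gets started for $\sigma\geq1/2$. This is not a technicality: the cutoff contributes a \emph{positive} error to $I_{n,\sigma,\R^n_+}[\cdot]$ which competes with the Hardy gain of order $R^{-2\sigma}$, and the cutoff must therefore be done at the concentration scale $\sim R$ (as the paper does, with $\eta U_\lambda$ supported in a fixed ball inside which the bubble concentrates). With that choice, a careful computation using the Euler--Lagrange equation gives a cutoff error $O(\lambda^{-n+2\sigma})$, which beats the gain $\sim\lambda^{-2\sigma}$ (or $\lambda^{-2\sigma}\log\lambda$) precisely when $n\geq4\sigma$. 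This is where the hypothesis enters; your stated reason (integrability of $U^2$) does not match your own computation, which only uses a fixed compact $K$ and needs $n>2\sigma$ (always true). That your argument as written ``proves'' the inequality under $n>2\sigma$ alone should itself be a red flag that the cutoff cost has been dropped. (For $\sigma<1/2$ the raw restriction $v_R|_{\R^n_+}$ is already admissible and your bound is fine, so the gap is concentrated in the regime $\sigma\geq1/2$, a large part of the theorem's range.)
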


The second theorem says that under assumption \eqref{eq:bindingineq} the half-space analogues of the theorems of Lieb and Lions mentioned above hold.

\begin{theorem}\label{mainhs}
Let $0<\sigma<1/2$ if $n=1$ and $\sigma\in(0,1/2)\cup(1/2,1)$ if $n\geq 2$ and assume \eqref{eq:bindingineq}. Then any minimizing sequence for $S_{n,\sigma}(\R^n_+)$, normalized in $\mathring H^\sigma(\R^n_+)$, is relatively compact in $\mathring H^\sigma(\R^n_+)$, up to translations parallel to the boundary and dilations. In particular, the infimum is attained.
\end{theorem}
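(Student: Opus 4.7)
My strategy is a concentration-compactness argument of the type introduced by Lions for the whole-space problem, adapted to the half-space and crucially using the binding inequality \eqref{eq:bindingineq} from Theorem~\ref{mainhsbinding} to rule out escape of mass into the bulk.

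Take a minimizing sequence $(u_k)\subset\mathring H^\sigma(\R^n_+)$ normalized by $\int_{\R^n_+}|u_k|^p\,\ud x = 1$, where $p := 2n/(n-2\sigma)$, so that $I_{n,\sigma,\R^n_+}[u_k]\to S_{n,\sigma}(\R^n_+)$. The Rayleigh quotient is invariant under the $n$-parameter group of horizontal translations $\tau\in\R^{n-1}$ and dilations $\lambda>0$. Via a L\'evy-type concentration function $Q_k(r) := \sup_{x_0\in\R^n_+}\int_{B_r(x_0)\cap\R^n_+}|u_k|^p\,\ud x$, I would rescale and translate so that $Q_k(1)=\delta\in(0,1)$ is attained on a ball centered at $(0,d_k)$ for some $d_k>0$. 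The $n-1$ horizontal translations fix $x'=0$ and the dilation fixes the L\'evy scale to $1$, so the only remaining scale-invariant datum is $d_k$, the distance of the concentration ball from the boundary. The entire argument pivots on a dichotomy on the behavior of $d_k$.

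Suppose first that, along a subsequence, $d_k\to\infty$. Set $v_k(x):=u_k(x+d_k e_n)$; these functions are supported in $\{x_n>-d_k\}$ and concentrate in $B_1(0)$. The identity
$$
I_{n,\sigma,\R^n}[v_k] = I_{n,\sigma,\{x_n>-d_k\}}[v_k] + \const\int_{\{x_n>-d_k\}}\frac{v_k(x)^2}{(x_n+d_k)^{2\sigma}}\,\ud x,
$$
whose first summand equals $I_{n,\sigma,\R^n_+}[u_k]$, together with a careful analysis of the boundary-interaction term (exploiting the L\'evy concentration of $|v_k|^p$ in a bounded region where the weight is $O(d_k^{-2\sigma})$ together with the fractional Hardy inequality for the tail), would yield $I_{n,\sigma,\R^n}[v_k]\to S_{n,\sigma}(\R^n_+)$. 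Extracting a weak limit $v_k\rightharpoonup v$ in $\mathring H^\sigma(\R^n)$ and combining Brezis--Lieb splittings of $I_{n,\sigma,\R^n}$ and $\|\cdot\|_p$ with the full-space Sobolev inequality $I_{n,\sigma,\R^n}[w]\ge S_{n,\sigma}(\R^n)\|w\|_p^2$ and the strict subadditivity $\alpha^{2/p}+(1-\alpha)^{2/p}>1$ for $\alpha\in(0,1)$ would force $S_{n,\sigma}(\R^n_+)\ge S_{n,\sigma}(\R^n)$, contradicting \eqref{eq:bindingineq}. Hence $d_k$ is bounded, and the weak limit $u\in\mathring H^\sigma(\R^n_+)$ is nontrivial by the local $L^p_{\loc}$-compactness embedding together with the fact that mass $\delta$ is trapped in a bounded ball.

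With $u\not\equiv 0$, the nonlocal Brezis--Lieb identities
$$
I_{n,\sigma,\R^n_+}[u_k] = I_{n,\sigma,\R^n_+}[u] + I_{n,\sigma,\R^n_+}[u_k-u] + o(1),\qquad \|u_k\|_p^p = \|u\|_p^p + \|u_k-u\|_p^p + o(1),
$$
combined with the Sobolev inequality applied to $u$ and to $u_k-u$ separately, yield with $\alpha:=\|u\|_p^p\in(0,1]$,
$$
S_{n,\sigma}(\R^n_+) \ge S_{n,\sigma}(\R^n_+)\bigl[\alpha^{2/p}+(1-\alpha)^{2/p}\bigr].
$$
Since $2/p=(n-2\sigma)/n<1$, strict concavity of $t\mapsto t^{2/p}$ forces $\alpha=1$. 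Then $u_k\to u$ in $L^p(\R^n_+)$, and the Brezis--Lieb identity for the quadratic form delivers strong convergence in $\mathring H^\sigma(\R^n_+)$, so that $u$ attains $S_{n,\sigma}(\R^n_+)$. The main technical obstacles I anticipate are: (i) the nonlocal Brezis--Lieb identity for $I_{n,\sigma,\R^n_+}$, requiring pointwise a.e.\ convergence and Fatou-type control of the singular kernel; (ii) the quantitative decay of the boundary-interaction term in the $d_k\to\infty$ case, since the fractional Hardy inequality alone gives only boundedness and a sharper use of the L\'evy concentration is needed; and (iii) the exclusion $\sigma=1/2$ for $n\ge 2$, which I suspect is forced by the failure of the zero-extension $\mathring H^{1/2}(\R^n_+)\hookrightarrow\mathring H^{1/2}(\R^n)$ at that endpoint (the Lions--Magenes $H^{1/2}_{00}$ phenomenon), breaking the comparison between the two problems used to handle the escape case.
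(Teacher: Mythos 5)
Your proposal takes a genuinely different route from the paper: Lions-style concentration-compactness with a L\'evy concentration function and a dichotomy on the concentration distance $d_k$ from the boundary, versus the paper's ``missing mass'' argument built on the notion of weak convergence modulo symmetries and the identity $S^*_{n,\sigma}(\R^n_+) = S_{n,\sigma}(\R^n)$ (Proposition~\ref{mainhsprop}), whose proof extracts a nonzero profile via the Besov-refined Sobolev inequality of Lemma~\ref{lem:improvedinequality} and then locates it by analyzing $c_k=t_k^{-1/2}b_k$. Both strategies are reasonable, and your plan for the bounded-$d_k$ branch is essentially what the paper does once a nontrivial limit is secured.

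However, there is a genuine gap at the crux of the escape case. You assert that the boundary-interaction term $\kappa_{n,\sigma}\int_{\R^n_+} u_k^2\, x_n^{-2\sigma}\,\ud x$ tends to zero when $d_k\to\infty$, invoking Hardy together with ``a sharper use of the L\'evy concentration,'' but you do not supply the estimate, and it is not automatic. This term is scale-invariant; the fractional Hardy inequality only bounds it by $I_{n,\sigma,\R^n_+}[u_k]=O(1)$; and $L^p$-concentration in a unit ball at distance $d_k$ from the wall says nothing directly about the weighted $L^2$ contribution from tails near $\{x_n=0\}$, where the weight is singular. This is exactly the difficulty the paper addresses in Step~3 of the proof of Proposition~\ref{mainhsprop}: it replaces the extracted profile $v$ by compactly supported approximants $w_k$ with $\supp w_k\subset\{|x'|\le c_k,\ |x_n|\le c_k/2\}$, and then bounds $\int_{x_n>0}\tilde w_k^2/x_n^{2\sigma}\,\ud x$ by splitting $|x_n|<\epsilon c_k$ from $|x_n|\ge\epsilon c_k$, using strong $L^{p}$ convergence of $w_k$ on the outer region and the $O(\epsilon^{2\sigma/n})$-smallness of the weight integral on the inner region. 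An analogue of this careful estimate is precisely what your outline is missing, and without it the escape case is not closed. Two smaller points: the embedding $\mathring H^\sigma\hookrightarrow L^q_{\loc}$ is compact only for $q<p=2n/(n-2\sigma)$, so the ``local $L^p_{\loc}$-compactness'' you cite for nontriviality of the limit is not available at the critical exponent and the vanishing step must run through the subcritical embedding; and your diagnosis of the exclusion $\sigma=1/2$ is on the mark — the paper's mechanism is the failure of the fractional Hardy inequality on $\R^n_+$ at that exponent, which is the analytic content of the $H^{1/2}_{00}$ phenomenon you mention.
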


As we will see in the proof, assumption \eqref{eq:bindingineq} is not only sufficient, but also necessary for the relative compactness modulo symmetries of all minimizing sequences.

We do not know whether the assumption $\sigma\neq 1/2$ for $n\geq 2$ is necessary for the conclusion of Theorem \ref{mainhs}. In our proof this assumption allows us to use the fractional Hardy inequality in half-spaces \cite{BoDy} (see also \cite{FrSe}).

Not only does the minimization problem $S_{n,\sigma}(\R^n_+)$ behave differently from $S_{n,\sigma}'(\R^n_+)$, it also behaves differently from its local analogue. Namely, one has both
$$
\lim_{\sigma\to 1-} (1-\sigma) I_{n,\sigma,\R^n}[u] = c_n \int_{\R^n} |\nabla u|^2\,\ud x
\qquad\text{for all}\ u\in H^1(\R^n) \,,
$$
and
$$
\lim_{\sigma\to 1-} (1-\sigma) I_{n,\sigma,\R^n_+}[u] = c_n \int_{\R^n_+} |\nabla u|^2\,\ud x
\qquad\text{for all}\ u\in H^1(\R^n_+)
$$
for some explicit constant $c_n\in (0,\infty)$. (This is essentially contained in \cite{BoBrMi}.) Therefore both minimization problems $S_{n,\sigma}'(\R^n_+)$ and $S_{n,\sigma}(\R^n_+)$ can be seen as fractional analogues of the minimization problem
\begin{equation}
\label{eq:snloc}
S_{n}(\R^n_+) = \inf_{0\not\equiv u \in\mathring H^1(\R^n_+)} \frac{\int_{\R^n_+} |\nabla u|^2\,\ud x}{\left( \int_{\R^n_+} |u|^\frac{2n}{n-2}\,\ud x \right)^\frac{n-2}{n}}
\end{equation}
for $n\geq 3$. For the latter problem, however, we obtain by the same arguments as for the $S_{n,\sigma}'(\R^n_+)$ problem that $S_{n}(\R^n_+) = S_{n}(\R^n)$ (the latter being defined in an obvious way with integrals extended over all of $\R^n$ and allowing for functions in $\mathring H^1(\R^n)$) and that the infimum is not attained.

The discrepancy between the $S_{n,\sigma}(\R^n_+)$ and $S_{n,\sigma}'(\R^n_+)$ problems can be explained as a Br\'ezis--Nirenberg effect. For $u\in\mathring H^\sigma(\R^n_+)$ we write
$$
I_{n,\sigma,\R^n_+}[u] = I_{n,\sigma,\R^n}[u] - 2 \int_{\R^n_+} |u|^2 \int_{\R^n_-} \frac{\ud y}{|x-y|^{n+2\sigma}} \,\ud x = I_{n,\sigma,\R^n}[u] - \kappa_{n,\sigma} \int_{\R^n_+} \frac{|u|^2}{x_n^{2\sigma}} \,\ud x
$$
with a constant $\kappa_{n,\sigma}\in(0,\infty)$ whose precise value is not important for us. Therefore the $S_{n,\sigma}(\R^n_+)$ problem is the $S_{n,\sigma}'(\R^n_+)$ problem with an additional negative term, and it is this term that for $n\geq 4\sigma$ lowers the value of the infimum and produces a minimizer. The fact that a `lower order term' can produce these phenomena in high enough dimensions was observed by Br\'ezis and Nirenberg (motivated by work of Aubin \cite{Au}) and our Theorems \ref{mainhsbinding} and \ref{mainhs} are the analogues of the results of Br\'ezis--Nirenberg and Lieb in \cite{BrNi}. We mention also that fractional versions of the Br\'ezis--Nirenberg problem were studied in \cite{CaTa,SeVa}. Our problem is significantly more complicated than the traditional or fractional Br\'ezis--Nirenberg problems since the term $\int x_n^{-2\sigma} |u|^2\,\ud x$ scales in the same way as $I_{n,\sigma,\R^n_+}[u]$ and therefore is almost of the same strength.

\medskip

As an application of our Sobolev inequality on half-spaces we now consider the case of an arbitrary domain $\Omega\subset\R^n$. We put
$$
I_{n,\sigma,\Omega}[u] := \iint_{\Omega\times\Omega} \frac{(u(x)-u(y))^2}{|x-y|^{n+2\sigma}} \,\ud x\,\ud y
$$
and denote by $\mathring H^\sigma(\Omega)$ the completion of $C^1_c(\Omega)$ with respect to the non-negative quadratic form $I_{n,\sigma,\Omega}$. Let
$$
S_{n,\sigma}(\Omega) := \inf_{0\not\equiv u \in\mathring H^\sigma(\Omega)} \frac{I_{n,\sigma,\Omega}[u]}{\left( \int_{\Omega} |u|^\frac{2n}{n-2\sigma}\,\ud x \right)^\frac{n-2\sigma}{n}} \,.
$$
(Strictly speaking, $\mathring H^\sigma(\Omega)$ may or may not be a space of functions and in the definition of $S_{n,\sigma}(\Omega)$ one should minimize over functions in $C^1_c(\Omega)$. When $S_{n,\sigma}(\Omega)$, defined in this way, is positive, which is the case we are mostly interested in, then $\mathring H^\sigma(\Omega)$ is a space of functions and the above definition is equivalent.)

Let us recall some results about the validity of the Sobolev inequality on $\Omega$. For $n\geq 2$ and $\sigma>1/2$ one has $S_{n,\sigma}(\Omega)>0$ for any open set $\Omega$. This follows from \cite{DyFr}, which even shows that $\underline S_{n,\sigma} := \inf_\Omega S_{n,\sigma}(\Omega)>0$. In passing we mention that it is an open problem to compute $\underline S_{n,\sigma}$ and to analyze minimizing sequences of sets $\Omega$. On the other hand, when $n\geq 1$ and $\sigma<1/2$, one has $S_{n,\sigma}(\Omega)=0$ for any open set $\Omega$ of finite measure with sufficiently regular boundary; see Lemma \ref{nosob}. However, one does have $S_{n,\sigma}(\Omega)>0$ for $n\geq 1$ and $\sigma<1/2$ if $\Omega$ is the complement of the closure of a bounded Lipschitz domain or a domain above the graph of a Lipschitz function. This follows from the Sobolev inequality on $\R^n$ and the Hardy inequality from \cite{Dy}. The case $\sigma=1/2$ seems to be not really understood.

Our next result compares $S_{n,\sigma}(\Omega)$ with $S_{n,\sigma}(\R^n_+)$ for a class a open sets whose boundary has a flat part. It would be interesting to extend this result to a larger class of sets.

\begin{theorem}\label{mainbddbinding}
Let $n\geq 2$ and $1/2<\sigma<1$ and assume that $S_{n,\sigma}(\R^n_+)$ is attained. Let $\Omega\subset\R^n_+$ be an open set such that for some $\epsilon>0$ and some $a\in\partial\R^n_+$,
$$
B_\epsilon^+(a)\subset\Omega
$$
and such that $\R^n_+\setminus\Omega$ has non-empty interior. Then
\begin{equation}
\label{eq:mainbddass}
S_{n,\sigma}(\Omega) < S_{n,\sigma}(\R^n_+) \,.
\end{equation}
\end{theorem}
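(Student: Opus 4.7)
The strategy is the fractional Br\'ezis--Nirenberg construction: take a minimizer of $S_{n,\sigma}(\R^n_+)$, translate and dilate it to concentrate at the flat boundary point $a$, truncate to fit inside $\Omega$, and show that the negative correction coming from the `missing' piece $\R^n_+\setminus\Omega$ strictly outweighs the truncation error. Let $U\in\mathring H^\sigma(\R^n_+)$ be a minimizer for $S_{n,\sigma}(\R^n_+)$, normalized by $\|U\|_{L^{p^*}(\R^n_+)}=1$ with $p^*=2n/(n-2\sigma)$; the attainment hypothesis forces $n\geq 4\sigma$. Writing $a=(a',0)$ and setting
\[
U_\lambda(x):=\lambda^{(n-2\sigma)/2}\,U\bigl(\lambda(x-a)\bigr),
\]
the function $U_\lambda$ is still a minimizer, since translation parallel to $\partial\R^n_+$ and dilation preserve the half-space problem. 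Pick $\eta\in C_c^\infty(B_\epsilon(a))$ with $\eta\equiv 1$ on $B_{\epsilon/2}(a)$ and $0\leq\eta\leq 1$, and define the test function $v_\lambda:=\eta U_\lambda$, which satisfies $\supp v_\lambda\subset B_\epsilon^+(a)\subset\Omega$.

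As in the discussion of the Br\'ezis--Nirenberg effect in the introduction, splitting the double integral and using $\supp v_\lambda\subset\Omega$ gives
\[
I_{n,\sigma,\Omega}[v_\lambda]=I_{n,\sigma,\R^n_+}[v_\lambda]-2\int_\Omega v_\lambda(x)^2\,V(x)\,\ud x,\qquad V(x):=\int_{\R^n_+\setminus\Omega}\frac{\ud y}{|x-y|^{n+2\sigma}}.
\]
Since $\R^n_+\setminus\Omega$ has non-empty interior and $\dist(a,\R^n_+\setminus\Omega)\geq\epsilon$, the function $V$ is smooth near $a$ with $V(a)>0$, hence $V\geq V(a)/2$ on some half-ball $B_\delta^+(a)$ with $\delta<\epsilon/2$. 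The change of variables $y=\lambda(x-a)$ yields the \emph{gain}
\[
\int_\Omega v_\lambda^2\,V\,\ud x\;\geq\;\tfrac12 V(a)\,\lambda^{-2\sigma}\!\int_{B_{\lambda\delta}^+(0)}U(y)^2\,\ud y\;\geq\;c\,\lambda^{-2\sigma}
\]
for all large $\lambda$ (with an additional $\log\lambda$ factor if $n=4\sigma$). Standard fractional cutoff estimates, together with the Euler--Lagrange equation for $U$ and the pointwise decay $U(y)\lesssim (1+|y|)^{-(n-2\sigma)}$, supply the \emph{loss}
\[
I_{n,\sigma,\R^n_+}[v_\lambda]\leq S_{n,\sigma}(\R^n_+)+O(\lambda^{-(n-2\sigma)}),\qquad \|v_\lambda\|_{L^{p^*}(\Omega)}^{p^*}\geq 1-O(\lambda^{-n}).
\]
Putting these together,
\[
\frac{I_{n,\sigma,\Omega}[v_\lambda]}{\|v_\lambda\|_{L^{p^*}(\Omega)}^2}\leq S_{n,\sigma}(\R^n_+)+C\lambda^{-(n-2\sigma)}-c\,\lambda^{-2\sigma},
\]
and since $n-2\sigma\geq 2\sigma$ (strictly, except at the borderline $n=4\sigma$ where the logarithm in the gain wins) the right-hand side falls below $S_{n,\sigma}(\R^n_+)$ for $\lambda$ large, yielding \eqref{eq:mainbddass}.

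The delicate step is proving the cutoff bound $I_{n,\sigma,\R^n_+}[v_\lambda]-I_{n,\sigma,\R^n_+}[U_\lambda]=O(\lambda^{-(n-2\sigma)})$ at this sharp rate; a naive triangle-inequality or $L^{p^*}$-Sobolev estimate produces only $O(\lambda^{-(n-4\sigma)})$, of the same order as the gain and thus too weak near the critical dimension. The improvement is obtained by using the Euler--Lagrange equation $I_{n,\sigma,\R^n_+}[U,\phi]=S_{n,\sigma}(\R^n_+)\int U^{p^*-1}\phi\,\ud y$ with $\phi=(\tilde\eta-1)U$, where $\tilde\eta(y)=\eta(a+y/\lambda)$: this turns the bilinear part of the error into the integral $-2S_{n,\sigma}(\R^n_+)\int(1-\tilde\eta)U^{p^*}\,\ud y=O(\lambda^{-n})$, so that only the purely quadratic residual $I_{n,\sigma,\R^n_+}[(1-\tilde\eta)U]$ has to be bounded, which is done using the pointwise decay of $U$ at infinity---itself a non-trivial but standard consequence of regularity for the corresponding integral equation on $\R^n_+$.
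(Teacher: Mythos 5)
Your overall strategy is the right one and agrees with the paper's: concentrate a minimizer of $S_{n,\sigma}(\R^n_+)$ at the flat boundary point $a$, cut it off inside $B_\epsilon^+(a)$, and compare the gain from the absorption potential $V=\int_{\R^n_+\setminus\Omega}|x-y|^{-n-2\sigma}\,\ud y$ against the cutoff loss. However, there are genuine gaps in your quantitative estimates, all traceable to a single source: you have used the whole-space decay rate $U(y)\lesssim(1+|y|)^{-(n-2\sigma)}$ for the half-space minimizer, which is not the correct (sharp) behavior. The paper's Proposition~\ref{prop:extremalfunct} shows that a non-negative solution of the Euler--Lagrange equation on the half-space satisfies $\Theta(x)\asymp x_n^{2\sigma-1}(1+|x|)^{-(n+2\sigma-2)}$; the factor $x_n^{2\sigma-1}$ encodes the forced vanishing at the boundary (for $\sigma>1/2$), and the tail exponent $n+2\sigma-2$ is strictly larger than $n-2\sigma$. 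This two-sided bound is the main technical content of Section~\ref{sec:estimateofminimizer} (reduction to a local estimate via Kelvin inversion plus the Chen--Kim--Song Green's function bounds), and calling it ``a standard consequence of regularity'' understates what is actually needed.

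The consequence of using the wrong decay is that your cutoff error $O(\lambda^{-(n-2\sigma)})$ is too weak. With the sharp decay, the paper obtains the much better rate $O(\lambda^{-(n+2\sigma-2)})$, which is $o(\lambda^{-2\sigma})$ as soon as $n>2$ --- no restriction to $n\geq4\sigma$ is needed. Your error only beats the gain when $n>4\sigma$, and your assertion that ``the attainment hypothesis forces $n\geq 4\sigma$'' is not justified: the paper proves $n\geq4\sigma\Rightarrow S_{n,\sigma}(\R^n_+)<S_{n,\sigma}(\R^n)\Rightarrow$ attainment, but the converse implication is left open, and the paper explicitly flags the case $n=2$, $\sigma>1/2$ as possibly void but handles it anyway. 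Relatedly, your placement of the marginal logarithm is off: because the correct decay of $\Theta$ makes $\int_{B_R^+}\Theta^2\sim R^{2-n}$ (rather than $R^{4\sigma-n}$), the $L^2$-integral is log-divergent at $n=2$, not at $n=4\sigma$; the paper uses the \emph{lower} bound of Proposition~\ref{prop:extremalfunct} to extract a $\lambda^{-2\sigma}\log\lambda$ gain precisely when $n=2$. So your argument, as written, establishes the theorem only in the subrange $n\geq 4\sigma$, while the paper, by exploiting the true anisotropic decay of the half-space minimizer, covers all $n\geq 2$ (conditionally on attainment) with the borderline $n=2$ rescued by the logarithm.
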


We recall that by Theorems \ref{mainhsbinding} and \ref{mainhs} the assumption $n\geq 4\sigma$ guarantees that $S_{n,\sigma}(\R^n_+)$ is attained. The reason for the assumption $\sigma>1/2$ will be explained after Proposition~\ref{prop:extremalfunct}.

Finally, we show that the strict inequality \eqref{eq:mainbddass} implies the existence of a minimizer and, more generally, relative compactness of minimizing sequences.

\begin{theorem}\label{mainbdd}
Let $n\geq 2$ and $1/2\leq\sigma<1$. Let $\Omega\subset\R^n$ be a bounded open set with $C^1$ boundary and assume that
$$
0< S_{n,\sigma}(\Omega)< S_{n,\sigma}(\R^n_+) \,.
$$
Then any minimizing sequence for $S_{n,\sigma}(\Omega)$, normalized in $\mathring H^{\sigma}(\Omega)$, is relatively compact in $\mathring H^{\sigma}(\Omega)$. In particular, the infimum is attained.
\end{theorem}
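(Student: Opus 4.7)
Since $\Omega$ is bounded, the goal is outright strong compactness in $\mathring H^\sigma(\Omega)$ of minimizing sequences. I would use a concentration--compactness argument in the spirit of Lions, adapted to the nonlocal form $I_{n,\sigma,\Omega}$. Let $(u_k)\subset\mathring H^\sigma(\Omega)$ satisfy $\int_\Omega |u_k|^q\,\mathrm{d}x=1$ with $q := 2n/(n-2\sigma)$ and $I_{n,\sigma,\Omega}[u_k]\to S_{n,\sigma}(\Omega)$. After extraction, $u_k\rightharpoonup u$ weakly in $\mathring H^\sigma(\Omega)$ and a.e.\ in $\Omega$. Writing $u_k=u+v_k$ with $v_k\rightharpoonup 0$, the fractional Br\'ezis--Lieb identities
\begin{equation*}
I_{n,\sigma,\Omega}[u_k] = I_{n,\sigma,\Omega}[u]+I_{n,\sigma,\Omega}[v_k]+o(1),
\qquad
1 = \|u\|_q^q+\|v_k\|_q^q+o(1),
\end{equation*}
combined with the strict concavity of $t\mapsto t^{2/q}$ on $[0,1]$ and the Sobolev inequality on $\Omega$, show that if $u\not\equiv 0$ then $v_k\to 0$ in $L^q(\Omega)$ and hence in $\mathring H^\sigma(\Omega)$, so $u$ is a minimizer.

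It therefore suffices to rule out $u\equiv 0$. A nonlocal analogue of Lions's concentration--compactness lemma yields, along a subsequence, an at most countable set $\{x_j\}\subset\overline\Omega$ and nonnegative numbers $\{\nu_j\},\{\mu_j\}$ with
\begin{equation*}
|u_k|^q\,\mathrm{d}x\rightharpoonup\nu=\sum_j\nu_j\delta_{x_j},\qquad \sum_j\nu_j=1,\qquad \mu_j\geq S^\ast(x_j)\,\nu_j^{2/q},
\end{equation*}
where $S^\ast(x_j)=S_{n,\sigma}(\R^n)$ if $x_j\in\Omega$ and $S^\ast(x_j)=S_{n,\sigma}(\R^n_+)$ if $x_j\in\partial\Omega$, and with $\sum_j\mu_j\leq\lim I_{n,\sigma,\Omega}[u_k]=S_{n,\sigma}(\Omega)$. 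Using $S_{n,\sigma}(\R^n_+)\leq S_{n,\sigma}(\R^n)$ (a trivial comparison between the two half-space problems) and $\nu_j^{2/q}\geq\nu_j$ for $\nu_j\in[0,1]$, summing gives $S_{n,\sigma}(\Omega)\geq S_{n,\sigma}(\R^n_+)\sum_j\nu_j=S_{n,\sigma}(\R^n_+)$, contradicting the hypothesis. Thus $u\not\equiv 0$, and the previous paragraph concludes.

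The main technical obstacle is establishing the boundary lower bound $\mu_j\geq S_{n,\sigma}(\R^n_+)\,\nu_j^{2/q}$ at points $x_j\in\partial\Omega$. In the local case one localizes near $x_j$ with a smooth cut-off and applies the half-space Sobolev inequality after a $C^1$ straightening of $\partial\Omega$. Here the nonlocality of $I_{n,\sigma,\Omega}$ forces the cut-off to produce a commutator coupling near and far regions of $\Omega$, and the straightening perturbs the Gagliardo kernel; both effects must be shown to be negligible as the localization shrinks to $x_j$. Concretely, I would rescale $w_k(y):=\lambda_k^{(n-2\sigma)/2}u_k(x_j+\lambda_k y)$ on $\Omega_k:=\lambda_k^{-1}(\Omega-x_j)$ with $\lambda_k\to 0$, show that $\Omega_k\to\R^n_+$ locally in Hausdorff sense, and establish convergence of the forms $I_{n,\sigma,\Omega_k}$ to $I_{n,\sigma,\R^n_+}$ by a kernel comparison controlled by the $C^1$ modulus of continuity of $\partial\Omega$ at scale $\lambda_k$, together with tail estimates that exploit the $L^q$-tightness of $|w_k|^q$. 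The borderline case $\sigma=1/2$ does not cause an additional difficulty here, since the standing hypothesis $S_{n,\sigma}(\Omega)>0$ already supplies the coercivity that one would otherwise need to derive from a half-space Hardy inequality.
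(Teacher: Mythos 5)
Your proposal reaches the correct conclusion and has the right overall shape, but it follows a genuinely different organizational route than the paper. The paper does not invoke a Lions concentration--compactness lemma with Dirac masses. Instead it introduces the quantity
\[
S^{**}_{n,\sigma}(\Omega) := \inf\Bigl\{ \liminf_{k\to\infty} \bigl( {\textstyle\int_\Omega} |u_k|^{2n/(n-2\sigma)}\,\ud x\bigr)^{-(n-2\sigma)/n} : I_{n,\sigma,\Omega}[u_k]=1,\ u_k\rightharpoonup 0 \Bigr\}
\]
and proves the identity $S^{**}_{n,\sigma}(\Omega)=S_{n,\sigma}(\R^n_+)$. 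The lower bound $S^{**}_{n,\sigma}(\Omega)\geq S_{n,\sigma}(\R^n_+)$ is obtained from a single global ``almost Sobolev'' inequality
\[
I_{n,\sigma,\Omega}[u]\ \geq\ (1-\epsilon)\,S_{n,\sigma}(\R^n_+)\Bigl( \int_\Omega|u|^{2n/(n-2\sigma)}\,\ud x \Bigr)^{(n-2\sigma)/n} - C_\epsilon \int_\Omega u^2\,\ud x \,,
\]
established by an IMS partition of unity $\sum_j\chi_j^2\equiv 1$ (whose commutator $\sum_j\iint u(x)u(y)(\chi_j(x)-\chi_j(y))^2|x-y|^{-n-2\sigma}$ is an $L^2$-bounded error by the Schur test) together with a $C^1$ straightening of $\partial\Omega$. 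Once this inequality is known, Rellich's theorem immediately kills the error on weakly vanishing sequences, and your first paragraph (Br\'ezis--Lieb for the quadratic form and for $L^q$, plus superadditivity of $t\mapsto t^{2/q}$) completes the argument exactly as in the paper.

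The price of your Lions-type formulation is a real technical point you have not fully addressed: the Gagliardo form on $\Omega$ has no canonical local energy density. To define the limit measure $\mu$ and obtain the atom bound $\mu_j\geq S^\ast(x_j)\nu_j^{2/q}$, you need to relate $\int\phi^2\,\ud\mu_k$ (for a cutoff $\phi$ near $x_j$) to $I_{n,\sigma,\Omega}[\phi u_k]$. These two quantities do not coincide, and the discrepancy must be absorbed either by a $\delta$-splitting of the cross term, $I[\phi u]\leq(1+\delta)\int\phi^2\,\ud\mu + C(\delta)\|u\|_2^2$, or by the exact IMS identity applied to the pair $\phi,\sqrt{1-\phi^2}$. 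This is workable (the residual coupling is again $L^2$-controlled), but it is exactly the bookkeeping that the paper's $S^{**}$ route avoids by working at a single fixed scale rather than rescaling at each concentration point. Your ``main technical obstacle'' remark correctly identifies the boundary straightening as the crux; the paper's Lemma on change of variables provides precisely that, and it is used both ways (for the lower bound on $S^{**}$ and for the test-function upper bound).

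One small imprecision: the coercivity you invoke for $\sigma=1/2$ is supplied by $S_{n,\sigma}(\R^n_+)>0$, not directly by $S_{n,\sigma}(\Omega)>0$. Fortunately the standing hypothesis $0<S_{n,\sigma}(\Omega)<S_{n,\sigma}(\R^n_+)$ gives this for free, so no separate half-space Hardy inequality is required; this is also why the paper needs the assumption $S_{n,\sigma}(\Omega)>0$ only in the borderline case.
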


We will also show that assumption \eqref{eq:mainbddass} is not only sufficient, but also necessary for the relative compactness of all minimizing sequences.

The assumption $S_{n,\sigma}(\Omega)>0$ is only needed for $\sigma=1/2$, since it holds automatically for $\sigma>1/2$, as recalled above. Moreover, we assume $n\geq 2$ and $1/2\leq\sigma<1$, because for $\sigma<1/2$ one has $S_{n,\sigma}(\Omega)=0$ by Lemma \ref{nosob}.

\medskip

Let us comment on the method of proof of our main results. Theorems \ref{mainhsbinding} and \ref{mainbddbinding} are proved by a trial function computation. We take the minimizers for the $S_{n,\sigma}(\R^n)$ and the $S_{n,\sigma}(\R^n_+)$ problem, respectively, scale them to a small ball and cut them off. To leading order, they will give the value of $S_{n,\sigma}(\R^n)$ and $S_{n,\sigma}(\R^n_+)$, and our goal is to compute the sub-leading correction. The computation is relatively straightforward in the proof of Theorem \ref{mainhsbinding} since the optimizer for the $S_{n,\sigma}(\R^n)$ is explicitly known. On the other hand, in the proof of Theorem \ref{mainbddbinding} we need to work with the unkown optimizer for $S_{n,\sigma}(\R^n_+)$ and it is crucial to have bounds on its behavior at infinity and near the boundary. These bounds are obtained by analyzing the Euler--Lagrange equation corresponding to the problem. Note that, since $I_{n,\sigma,\R^n_+}[u]\geq I_{n,\sigma,\R^n_+}[|u|]$ for all $u\in\mathring H^\sigma(\R^n_+)$, we may assume that the minimizer is non-negative. For non-negative functions, the Euler--Lagrange equation reads, after an appropriate normalization,
\begin{equation}
\label{eq:minimizer}
2 \int_{\R^n_+} \frac{u(x)-u(y)}{|x-y|^{n+2\sigma}}\,\ud y = u(x)^\frac{n+2\sigma}{n-2\sigma}
\qquad\text{for}\ x\in\R^n_+ \,.
\end{equation}
Here and in all the following the integral on the left side is understood in the principal value sense as the limit as $\epsilon\to 0$ of the integrals over $|x-y|>\epsilon$.

\begin{proposition}\label{prop:extremalfunct}
Let $n\geq 2$ and $1/2<\sigma<1$. Let $0\not\equiv u\in \mathring H^\sigma(\R^n_+)$ be non-negative and satisfy \eqref{eq:minimizer}. Then there are constants $0<c\leq C<\infty$ (depending on $u$) such that
\[
c \frac{x_n^{2\sigma-1}} {(1+|x|)^{n+2\sigma-2}} \leq u(x) \le C \frac{x_n^{2\sigma-1}} {(1+|x|)^{n+2\sigma-2}}\quad\mbox{for }x\in\R^n_+.
\]
\end{proposition}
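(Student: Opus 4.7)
The bounds naturally split into two regimes: the local boundary profile $u\asymp x_n^{2\sigma-1}$ in any bounded region meeting $\partial\R^n_+$, and the far-field behaviour $u\asymp x_n^{2\sigma-1}|x|^{-(n+2\sigma-2)}$. These are linked by the Kelvin transform $\tilde u(x):=|x|^{-(n-2\sigma)}u(x/|x|^2)$: the inversion $Tx=x/|x|^2$ sends $\R^n_+$ to itself with $|Tx-Ty|=|x-y|/(|x||y|)$, and at the critical exponent this transform preserves both $\mathring H^\sigma(\R^n_+)$ and equation \eqref{eq:minimizer}. It therefore suffices to prove a local two-sided estimate $c\,x_n^{2\sigma-1}\le u(x)\le C\,x_n^{2\sigma-1}$ on a half-ball $B_r^+(0)$; applying this to $\tilde u$ in place of $u$ and transporting back yields the sharp asymptotics at infinity, while the intermediate compact annulus $R^{-1}\le|x|\le R$ is handled by interior regularity.

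The concrete plan is: (i) prove $u\in L^\infty(\R^n_+)$ with interior H\"older continuity; (ii) verify the Kelvin invariance of \eqref{eq:minimizer}; (iii) prove the local two-sided boundary estimate. Step (i) is a De Giorgi/Moser iteration for \eqref{eq:minimizer} based on the fractional Sobolev inequality on $\R^n_+$ applied to truncations of $u$; it depends only on $\|u\|_{L^{2n/(n-2\sigma)}}$ and in particular renders the nonlinear source $u^{(n+2\sigma)/(n-2\sigma)}$ bounded. Step (ii) is a direct change of variables based on the Kelvin identity above.

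The crux is (iii). The key algebraic fact is that the profile $w(x)=x_n^{2\sigma-1}$ is annihilated, in the principal-value sense, by the regional fractional Laplacian
\[
L u(x) := 2\,\mathrm{PV}\!\int_{\R^n_+}\!\frac{u(x)-u(y)}{|x-y|^{n+2\sigma}}\,\ud y
\]
on $\R^n_+$, i.e. the operator appearing on the left of \eqref{eq:minimizer}. Multiplying $w$ by a smooth cutoff $\eta$ equal to $1$ on $B_{r/2}^+$ and supported in $B_r^+$, one verifies that $Aw\eta$ is a supersolution on $B_{r/2}^+$ for $A$ sufficiently large (using boundedness of the source from Step (i)) and dominates $u$ outside $B_{r/2}^+$ by Step (i); a comparison principle for $L$ then yields the upper bound $u\le Aw$ on $B_{r/2}^+$. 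For the lower bound, $u>0$ in $\R^n_+$ by a strong maximum principle for $L$, and a Hopf-type construction based again on $Lw=0$ produces a subsolution pinching $u\ge cw$ on a smaller half-ball.

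The main obstacle is precisely the barrier analysis in (iii): justifying $L(x_n^{2\sigma-1})=0$ rigorously, controlling the error introduced by the cutoff $\eta$, and establishing a workable comparison principle and Hopf lemma for $L$ on half-balls. This is also where the hypothesis $\sigma>1/2$ enters in an essential way: for $\sigma\le 1/2$ the exponent $2\sigma-1$ is nonpositive, so $x_n^{2\sigma-1}$ does not vanish at $\partial\R^n_+$ and is not an admissible barrier in the relevant function space.
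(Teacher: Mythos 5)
Your reduction of the global estimate to a local one via the Kelvin transform $u_1(y)=|y|^{2\sigma-n}u(y/|y|^2)$ is exactly the paper's step, and your reading of why $\sigma>1/2$ is essential (so that $x_n^{2\sigma-1}$ vanishes on the boundary) is in line with the paper's remarks. Where you diverge is in the local estimate on $B_1^+$, and this is a genuinely different route. The paper does not build barriers: it invokes the two-sided bounds of Chen--Kim--Song on the Green's function $G$ of the regional operator $L_\sigma$ (Proposition~\ref{green}), rewrites \eqref{eq:minimizer} as the integral identity $u=a_{n,\sigma}\int_{\R^n_+}G(\cdot,y)\,u(y)^{(n+2\sigma)/(n-2\sigma)}\,\ud y$, and then reads off the upper bound $u\le Cx_n^{2\sigma-1}$ by inserting a previously established global decay bound into the upper estimate for $G$, while the lower bound $u\ge c\,x_n^{2\sigma-1}$ comes from interior positivity (maximum principle) plus the lower estimate for $G$. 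The $L^\infty$ input is likewise obtained by a bootstrap on the integral equation (using the resolvent Green's function $\tilde G$ of $L_\sigma+1$ and Lemma~A.1 of Br\'ezis--Lieb) rather than by De Giorgi/Moser iteration on the differential form of the equation, though that choice is a matter of taste. In effect, the entire boundary behaviour that you want to extract by comparison with the $L$-harmonic profile $x_n^{2\sigma-1}$ is already encoded in the Chen--Kim--Song Green's function bound and is imported as a black box.

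Your alternative is plausible — the fact that $x_n^{2\sigma-1}$ is $L$-harmonic for $\sigma>1/2$ is the same fact that underlies the Chen--Kim--Song asymptotics, and barrier-plus-comparison is how boundary Harnack for censored stable processes is typically argued. But as you candidly note, the crux — a rigorous $L(x_n^{2\sigma-1})=0$, the cutoff error when localizing a profile that is neither in $\mathring H^\sigma(\R^n_+)$ nor decaying, a usable comparison principle and Hopf lemma for $L$ on half-balls — is exactly the hard part and is left as a program. The paper's choice to cite the heat kernel/Green's function estimates sidesteps all of that at the cost of relying on a nontrivial external result. So: same reduction, genuinely different (and, as written, still incomplete) mechanism for the key local two-sided estimate.
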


It would be interesting to understand the behavior of non-negative solutions of \eqref{eq:minimizer} for $0<\sigma\leq 1/2$. The assumption $\sigma>1/2$ in Proposition \ref{prop:extremalfunct} leads to the same assumption in Theorem \ref{mainbddbinding}.

In order to prove Theorems \ref{mainhs} and \ref{mainbdd} we use the method of the missing mass, an optimization strategy that goes back to Lieb's work \cite{Lieb} as well as his contribution to \cite{BrNi}; see also \cite{BrLi}. Early uses of this method are in \cite{BrLi2,FrLiLo} and more recent ones, for instance, in \cite{TeTi,dVWi,FaVeVi,BeFrVi,FrLi4,FrLiSa}. The intuition, which is easier to explain in the context of Theorem \ref{mainbdd}, is that if a minimizing sequence goes weakly to zero in $\mathring H^\sigma(\Omega)$, then the sequence either concentrates in the interior of the domain or at the boundary and therefore the minimization problem looks `almost' like that on $\R^n$ or on $\R^n_+$. Thus, the strict inequality \eqref{eq:mainbddass} (together with the fact that $S_{n,\sigma}(\R^n_+)\leq S_{n,\sigma}(\R^n)$) excludes this behavior and therefore we have a non-zero weak limit. The non-linear structure of the minimization problem allows to upgrade this weak convergence to strong convergence, thereby proving Theorem \ref{mainbdd}.

The proof of Theorem \ref{mainhs} follows the same idea, but is technically more involved because of the invariance of the problem under dilations and translations parallel to the boundary. These symmetries allow a sequence of functions to go weakly to zero but the only interesting behavior is if a sequence goes to zero in a different way. This is formalized through the notion of weak convergence modulo symmetries. The intuition is that the only sequences that go to zero modulo symmetries are sequences that move away from the boundary in such a way that the problem looks `almost' like that on $\R^n$. Thus, the strict inequality \eqref{eq:bindingineq} excludes this behavior and we have a non-zero weak limit modulo symmetries. The rest of the proof is as in the case of a bounded domain. We note that the analysis here has similarities to that of the Hardy--Sobolev--Maz'ya inequality in \cite{TeTi} and of the Stein--Tomas inequality \cite{FrLiSa}, where one also has to consider weak convergence modulo the symmetries of the problem.


\subsection*{Acknowledgements}

Part of this work was done when T. J. was visiting California Institute of Technology as an Orr foundation Caltech-HKUST Visiting Scholar during 2015-2016. He would like to thank Professor Thomas Y. Hou for hosting his visit. He also thanks Professors Zhen-Qing Chen and  Dong Li for useful discussions. Partial support through National Science Foundation, grant DMS-1363432 (R.L.F.), Hong Kong RGC grant ECS 26300716 (T.J.) and NSFC 11501034, a key project of NSFC 11631002 and NSFC 11571019, (J.X.) is acknowledged.


\section{Verifying the strict inequality \eqref{eq:bindingineq}}

Our goal in this section is to prove Theorem \ref{mainhsbinding}. As a warm-up we prove a much simpler result, namely that the \emph{non-strict} inequality $S_{n,\sigma}(\Omega)\leq S_{n,\sigma}(\R^n)$ holds on any open set $\Omega$ without any additional assumptions on $n$ and $\sigma$.

\begin{lemma}\label{bindingsimple}
Let $0<\sigma<1/2$ if $n=1$ and $0<\sigma<1$ if $n\geq 2$ and let $\Omega\subset\R^n$ be an open set. Then
$$
S_{n,\sigma}(\Omega) \leq S_{n,\sigma}(\R^n) \,.
$$
\end{lemma}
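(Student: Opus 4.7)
The plan is a standard trial-function argument: transplant a rescaled whole-space optimizer into $\Omega$, cut it off, and show that the Sobolev quotient on $\Omega$ approaches $S_{n,\sigma}(\R^n)$ as the scale shrinks.

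Concretely, fix a point $x_0\in\Omega$ and a radius $r>0$ with $B_r(x_0)\subset\Omega$. Let $U(x)=(1+|x|^2)^{-(n-2\sigma)/2}$ be Lieb's optimizer for $S_{n,\sigma}(\R^n)$, and for $\lambda>0$ define the translated/rescaled bubble
$$
U_\lambda(x) := \lambda^{-(n-2\sigma)/2}\, U\!\left(\tfrac{x-x_0}{\lambda}\right).
$$
By translation and dilation invariance, $U_\lambda$ also attains $S_{n,\sigma}(\R^n)$. Choose $\eta\in C^\infty_c(B_r(x_0))$ with $0\leq\eta\leq 1$ and $\eta\equiv 1$ on $B_{r/2}(x_0)$, and set $\phi_\lambda := \eta\, U_\lambda$. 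This is smooth and compactly supported in $\Omega$, hence an admissible trial function for $S_{n,\sigma}(\Omega)$.

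The central step is to show that, with $p=2n/(n-2\sigma)$,
\begin{equation*}
\|\phi_\lambda\|_{L^p(\Omega)}^p \longrightarrow \|U\|_{L^p(\R^n)}^p
\qquad\text{and}\qquad
I_{n,\sigma,\R^n}[\phi_\lambda] \longrightarrow I_{n,\sigma,\R^n}[U]
\qquad \text{as }\lambda\to 0^+.
\end{equation*}
The $L^p$ convergence follows from the change of variables $x=x_0+\lambda z$ together with dominated convergence (since $\eta\to 1$ at the concentration scale). For the seminorm, write $U_\lambda = \phi_\lambda + v_\lambda$ with $v_\lambda := (1-\eta)U_\lambda$. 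Since $I_{n,\sigma,\R^n}[U_\lambda]=I_{n,\sigma,\R^n}[U]$ by scale invariance, the triangle inequality in the seminorm $I_{n,\sigma,\R^n}[\,\cdot\,]^{1/2}$ reduces everything to showing that $I_{n,\sigma,\R^n}[v_\lambda]\to 0$. Outside $B_{r/2}(x_0)$ one has the pointwise bound $U_\lambda(x)\leq C\,\lambda^{(n-2\sigma)/2}\,|x-x_0|^{-(n-2\sigma)}$, and a parallel bound for $|\nabla U_\lambda|$; since $v_\lambda$ vanishes on $B_{r/2}(x_0)$, splitting the double integral into the regions where each variable lies inside or outside $B_r(x_0)$ and using these tail bounds yields $I_{n,\sigma,\R^n}[v_\lambda]=O(\lambda^{n-2\sigma})$, hence $o(1)$.

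Combining the two limits with the trivial domain monotonicity $I_{n,\sigma,\Omega}[\phi_\lambda]\leq I_{n,\sigma,\R^n}[\phi_\lambda]$ gives
$$
S_{n,\sigma}(\Omega) \leq \frac{I_{n,\sigma,\Omega}[\phi_\lambda]}{\|\phi_\lambda\|_{L^p(\Omega)}^2} \leq \frac{I_{n,\sigma,\R^n}[\phi_\lambda]}{\|\phi_\lambda\|_{L^p(\Omega)}^2} \longrightarrow \frac{I_{n,\sigma,\R^n}[U]}{\|U\|_{L^p(\R^n)}^2} = S_{n,\sigma}(\R^n),
$$
which is the desired inequality. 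The only mildly technical point is the cutoff estimate $I_{n,\sigma,\R^n}[v_\lambda]\to 0$; this is routine for the fractional seminorm because $U_\lambda$ concentrates at $x_0$ at rate $\lambda$ while $v_\lambda$ is supported away from $x_0$, so no sub-leading correction needs to be computed (in contrast to the strict inequality in Theorem~\ref{mainhsbinding}, where the sub-leading term is the entire point).
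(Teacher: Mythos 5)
Your proof is correct, but it works considerably harder than necessary, and the paper's own argument is notably slicker. The paper observes that the explicit optimizer plays no role for the non-strict inequality: take an \emph{arbitrary} $W\in C^1_c(\R^n)$ and rescale it as $W_\lambda(x)=\lambda^{(n-2\sigma)/2}W(\lambda x)$. Since $W$ already has compact support, $W_\lambda\in C^1_c(\Omega)$ for all large $\lambda$ (after translating so that $0\in\Omega$), and then $I_{n,\sigma,\Omega}[W_\lambda]\leq I_{n,\sigma,\R^n}[W_\lambda]=I_{n,\sigma,\R^n}[W]$ and $\int_\Omega |W_\lambda|^{2n/(n-2\sigma)}\,\ud x=\int_{\R^n}|W|^{2n/(n-2\sigma)}\,\ud x$ hold as \emph{exact} scaling identities, with no error term at all. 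Taking the infimum over $W$ and invoking density of $C^1_c(\R^n)$ in $\mathring H^\sigma(\R^n)$ gives $S_{n,\sigma}(\Omega)\leq S_{n,\sigma}(\R^n)$ immediately. By insisting on the Lieb bubble, which has non-compact support, you are forced to introduce a cutoff and then prove $I_{n,\sigma,\R^n}[(1-\eta)U_\lambda]\to 0$; your estimate is correct (on $\supp(1-\eta)$ one has $|v_\lambda|+|\nabla v_\lambda|\lesssim\lambda^{(n-2\sigma)/2}|x-x_0|^{-(n-2\sigma)}$, and the limiting rescaled tail has finite seminorm, so indeed $I_{n,\sigma,\R^n}[v_\lambda]=O(\lambda^{n-2\sigma})$), but it is exactly the work the paper avoids by replacing ``explicit optimizer'' with ``density of $C^1_c$''. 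That said, the machinery you set up --- cut-off bubble, tail estimates, tracking sub-leading behavior --- is precisely what \emph{is} required in Proposition~\ref{stricths}, where one needs the strict inequality and the sub-leading term is the whole point; so your computation is a useful rehearsal for that proof rather than a waste, but for the present lemma it is overkill.
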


\begin{proof}
After a translation we may assume that $0\in\Omega$. Let $0\not\equiv W\in C^1_c(\R^n)$ with compact support and set $W_\lambda(x):= \lambda^\frac{n-2\sigma}{2} W(\lambda x)$. Then $W_\lambda\in C^1_c(\Omega)$ for all sufficiently large $\lambda$ and we have
$$
I_{n,\sigma,\Omega}[W_\lambda] \leq I_{n,\sigma,\R^n}[W_\lambda] = I_{n,\sigma,\R^n}[W] \,.
$$
Moreover,
$$
\int_\Omega W_\lambda(x)^\frac{2n}{n-2\sigma}\,\ud x = \int_{\R^n} W(x)^\frac{2n}{n-2\sigma}\,\ud x \,, 
$$
and therefore
$$
S_{n,\sigma}(\Omega) \leq \frac{I_{n,\sigma,\R^n}[W]}{\left( \int_{\R^n} W(x)^\frac{2n}{n-2\sigma}\,\ud x \right)^\frac{n-2\sigma}{n}} \,.
$$
We now take the infimum over all $W\in C^1_c(\R^n)$ and using the density of $C^1_c(\R^n)$ in $\mathring H^\sigma(\R^n)$, we obtain the claim.
\end{proof}

The following proposition implies, in particular, Theorem \ref{mainhsbinding}.

\begin{proposition}\label{stricths}
Let $0<\sigma<1/2$ if $n=1$ and $0<\sigma<1$ if $n\geq 2$ and assume that
$$
n\geq 4\sigma \,.
$$
Let $\Omega\subset\R^n$ be an open set such that $\Omega^c$ has non-empty interior. Then
$$
S_{n,\sigma}(\Omega) < S_{n,\sigma}(\R^n) \,.
$$
\end{proposition}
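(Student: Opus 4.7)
My plan is to construct an explicit trial function concentrating at a point of $\Omega$ and exploit the decomposition, noted in the introduction,
\[
I_{n,\sigma,\Omega}[u] = I_{n,\sigma,\R^n}[u] - 2\int_\Omega u(x)^2 \int_{\Omega^c}\frac{\ud y}{|x-y|^{n+2\sigma}}\,\ud x
\]
(valid for $u$ supported in $\Omega$), to beat $S_{n,\sigma}(\R^n)$ by a sub-leading negative term. After translating I fix $0\in\Omega$, a radius $\rho>0$ with $\overline{B_{2\rho}}\subset\Omega$, and a ball $B_r(y_0)\subset\Omega^c$ (available by the hypothesis that $\Omega^c$ has non-empty interior). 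With $U(x)=(1+|x|^2)^{-(n-2\sigma)/2}$ the Lieb minimizer for $S_{n,\sigma}(\R^n)$ and $U_\lambda(x):=\lambda^{(n-2\sigma)/2}U(\lambda x)$, pick $\eta\in C^1_c(B_{2\rho})$ with $\eta\equiv 1$ on $B_\rho$, and set $u_\lambda:=\eta\, U_\lambda\in C^1_c(\Omega)$. The goal is to show, for all sufficiently large $\lambda$, that the Sobolev ratio of $u_\lambda$ is strictly below $S_{n,\sigma}(\R^n)$.

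Two routine pieces supply the leading behavior. First, since the cut-off only removes the tail of a Lieb bubble past $|z|\ge\lambda\rho$ after rescaling,
\[
\int_\Omega u_\lambda^{2n/(n-2\sigma)}\,\ud x=\int_{\R^n}U^{2n/(n-2\sigma)}\,\ud x+O(\lambda^{-n}).
\]
Second, using the scale invariance $I_{n,\sigma,\R^n}[U_\lambda]=I_{n,\sigma,\R^n}[U]$, writing $u_\lambda=U_\lambda-(1-\eta)U_\lambda$, and controlling $I_{n,\sigma,\R^n}[(1-\eta)U_\lambda]$ via the pointwise decay $U_\lambda(x)\le \lambda^{-(n-2\sigma)/2}|x|^{-(n-2\sigma)}$ on $|x|\ge\rho$ (plus Cauchy--Schwarz on the bilinear cross term), one obtains $I_{n,\sigma,\R^n}[u_\lambda]=I_{n,\sigma,\R^n}[U]+O(\lambda^{-(n-2\sigma)})$.

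The decisive step is a lower bound on
\[
C_\lambda := \int_\Omega u_\lambda(x)^2\int_{\Omega^c}\frac{\ud y}{|x-y|^{n+2\sigma}}\,\ud x.
\]
Substituting $z=\lambda x$, $w=\lambda y$, the Jacobians cancel the scaling of $U_\lambda^2$ and of the kernel, leaving
\[
C_\lambda = \int_{\lambda\Omega}U(z)^2\,\eta(z/\lambda)^2\int_{\lambda\Omega^c}\frac{\ud w}{|z-w|^{n+2\sigma}}\,\ud z.
\]
Restricting the inner integral to $w\in B_{\lambda r}(\lambda y_0)\subset\lambda\Omega^c$ and using $|z-w|\le \lambda(\rho+r+|y_0|)$ for $|z|\le\lambda\rho$, one gets $\int_{\lambda\Omega^c}|z-w|^{-(n+2\sigma)}\,\ud w\ge c_0\lambda^{-2\sigma}$ uniformly there, so
\[
C_\lambda\;\ge\;c_0\,\lambda^{-2\sigma}\int_{B_{\lambda\rho}}U(z)^2\,\ud z.
\]
For $n>4\sigma$ the function $U^2$ is integrable and $C_\lambda\ge c\,\lambda^{-2\sigma}$; in the borderline case $n=4\sigma$, $\int_{B_R}U^2\sim c\log R$, so $C_\lambda\ge c\,\lambda^{-2\sigma}\log\lambda$. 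Because $n-2\sigma\ge 2\sigma$ under the hypothesis $n\ge 4\sigma$, in either case this correction strictly dominates the cut-off error $O(\lambda^{-(n-2\sigma)})$, and combining the three estimates yields
\[
\frac{I_{n,\sigma,\Omega}[u_\lambda]}{\bigl(\int_\Omega u_\lambda^{2n/(n-2\sigma)}\bigr)^{(n-2\sigma)/n}}\;\le\; S_{n,\sigma}(\R^n)-c'\,\lambda^{-2\sigma}\bigl(1+o(1)\bigr)
\]
for all large $\lambda$, which is the desired strict inequality.

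I expect the main obstacle to be the cut-off estimate for the non-local quadratic form, namely $I_{n,\sigma,\R^n}[(1-\eta)U_\lambda]=O(\lambda^{-(n-2\sigma)})$. Unlike in the classical (local) Br\'ezis--Nirenberg setting, where the analogous error is immediate from a gradient computation on the annulus $\{\rho\le|x|\le 2\rho\}$, here one has to split the double integral according to whether $x$ and $y$ lie in the tail $\{|x|\ge\rho\}$ or in the bulk and use the polynomial decay of $U_\lambda$ in each piece, together with the translation invariance of $I_{n,\sigma,\R^n}$. The boundary case $n=4\sigma$ is where the budget is tightest, and the logarithmic divergence of $\int U^2$ is exactly what is needed for the correction to still defeat the cut-off error.
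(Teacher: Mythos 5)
Your overall strategy --- cutting off a Lieb bubble concentrated near an interior point of $\Omega$, exploiting the decomposition $I_{n,\sigma,\Omega}[u]=I_{n,\sigma,\R^n}[u]-2\int_\Omega u^2\int_{\Omega^c}|x-y|^{-n-2\sigma}\,\ud y\,\ud x$, and showing the negative ``potential'' term of order $\lambda^{-2\sigma}$ (or $\lambda^{-2\sigma}\log\lambda$ when $n=4\sigma$) beats the cut-off error --- is the same as the paper's, and the lower bound on $C_\lambda$ is correctly executed. But there is a gap in the handling of the bilinear cross term, and it is fatal precisely in the range $4\sigma\le n\le 6\sigma$ where the hypothesis is tight.

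Writing $\phi_\lambda:=(1-\eta)U_\lambda$, you want $I_{n,\sigma,\R^n}[u_\lambda]=I_{n,\sigma,\R^n}[U]-2I_{n,\sigma,\R^n}[U_\lambda,\phi_\lambda]+I_{n,\sigma,\R^n}[\phi_\lambda]$ to equal $I_{n,\sigma,\R^n}[U]+O(\lambda^{-(n-2\sigma)})$. Granting $I_{n,\sigma,\R^n}[\phi_\lambda]=O(\lambda^{-(n-2\sigma)})$ (which is true, though one has to be careful: $\phi_\lambda\notin L^2$ for $n\le 4\sigma$, so the crude bound $(\phi_\lambda(x)-\phi_\lambda(y))^2\lesssim\phi_\lambda(x)^2+\phi_\lambda(y)^2$ on $|x-y|\gtrsim 1$ diverges; one must split on $|x-y|\lessgtr\max(|x|,|y|)/2$ and use both the size and the derivative decay of $\phi_\lambda$), Cauchy--Schwarz gives only
\[
\bigl|I_{n,\sigma,\R^n}[U_\lambda,\phi_\lambda]\bigr|\le I_{n,\sigma,\R^n}[U_\lambda]^{1/2}\,I_{n,\sigma,\R^n}[\phi_\lambda]^{1/2}=O\bigl(\lambda^{-(n-2\sigma)/2}\bigr),
\]
so your expansion in fact reads $I_{n,\sigma,\R^n}[u_\lambda]=I_{n,\sigma,\R^n}[U]+O(\lambda^{-(n-2\sigma)/2})$. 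This is weaker than your claimed $O(\lambda^{-(n-2\sigma)})$, and $\lambda^{-(n-2\sigma)/2}$ beats the favorable $\lambda^{-2\sigma}$ unless $n>6\sigma$ (and for $n=4\sigma$ it swamps even $\lambda^{-2\sigma}\log\lambda$). The remedy is to use the Euler--Lagrange equation $2\int(U_\lambda(x)-U_\lambda(y))|x-y|^{-n-2\sigma}\,\ud y=S_{n,\sigma}(\R^n)\,U_\lambda^{(n+2\sigma)/(n-2\sigma)}$ to evaluate the cross term as
\[
I_{n,\sigma,\R^n}[U_\lambda,\phi_\lambda]=S_{n,\sigma}(\R^n)\int(1-\eta)\,U_\lambda^{\frac{2n}{n-2\sigma}}\,\ud x=O(\lambda^{-n}),
\]
which is negligible. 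The paper circumvents this issue entirely: instead of expanding $I_{n,\sigma,\R^n}[u_\lambda]$ it writes $I_{n,\sigma,\Omega}[u_\lambda]=\int_\Omega u_\lambda f_\lambda$ with $f_\lambda(x)=2\int_\Omega(u_\lambda(x)-u_\lambda(y))|x-y|^{-n-2\sigma}\,\ud y$ and bounds $f_\lambda$ pointwise via the Euler--Lagrange equation on $B_1$ and $B_3\setminus B_1$ separately; the tail contributions give $O(\lambda^{-(n-2\sigma)/2})\int_{B_3}U_\lambda=O(\lambda^{-(n-2\sigma)})$ without any lossy Cauchy--Schwarz step.

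One smaller comment: you flagged the estimate for $I_{n,\sigma,\R^n}[(1-\eta)U_\lambda]$ as the main obstacle, but the cross term is at least as dangerous, and it is exactly where the $\lambda^{-(n-2\sigma)/2}$ versus $\lambda^{-(n-2\sigma)}$ distinction arises; both require the Euler--Lagrange structure of $U_\lambda$ (or an equivalent pointwise estimate on $(-\Delta)^\sigma U_\lambda$), not just the decay of $U_\lambda$ itself.
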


For the proof we will need the fact, recalled in the introduction, that the optimal constant $S_{n,\sigma}(\R^n)$ is achieved by multiples, translates and dilates of the function $(1+|x|^2)^{-\frac{n-2\sigma}{2}}$. For $\lambda>0$ let
\[
U_\lda(x)=c_0\left(\frac{\lda}{1+\lda^2|x|^2}\right)^\frac{n-2\sigma}{2},
\]
where $c_0$ is a constant (independent of $\lda$) such that $\|U_\lda\|_{L^\frac{2n}{n-2\sigma}(\R^n)}=1$. The Euler--Lagrange equation of the minimization problem reads
\be\label{eq:ELa}
2 \int_{\R^n}\frac{U_\lda(x)-U_\lda(y)}{|x-y|^{n+2\sigma}}\,\ud y= S_{n,\sigma}(\R^n)\ U_\lda(x)^{\frac{n+2\sigma}{n-2\sigma}} \,.
\ee
We recall that the integral on the left side is understood in a principal value sense.

\begin{proof}
We denote by $B_r$ the open ball in $\R^n$ centered at the origin with radius $r>0$. Since the assumption and the conclusion of the proposition are invariant with respect to translations and dilations of $\Omega$, we may assume that $B_4\subset \om$. 

Let $\eta$ be a radial $C^1$ function such that $\eta\equiv 1$ in $B_2$, $0\leq\eta\leq 1$ in $B_3$ and $\eta\equiv 0$ in $B_3^c$, and set
$$
u_\lda=\eta U_\lda \,.
$$
This function belongs to $C^1_c(\Omega)$ and we will estimate $\| u_\lambda\|_\frac{2n}{n-2\sigma}$ and $I_{n,\sigma,\Omega}[u_\lambda]$ as $\lambda\to\infty$ in order to get an upper bound for $S_{n,\sigma}(\Omega)$.

By the normalization and the decay of $U_\lambda$, it is easy to see that
\be\label{eq:Error1}
\int_{\om} u_\lda^{\frac{2n}{n-2\sigma}}\,\ud x= 1- O(\lda^{-n}) \,.
\ee

In order to bound $I_{n,\sigma,\Omega}[u_\lambda]$ we write
\be
\label{eq:Error2a}
I_{n,\sigma,\Omega}[u_\lambda] = \int_{\om } u_\lda(x)f_\lambda(x)\,\ud x
\ee
with
$$
f_\lambda(x) := 2\int_{\om} \frac{u_\lda(x)-u_\lda(y)}{|x-y|^{n+2\sigma}}\,\ud y
$$
and estimate $f_\lambda$ pointwise in the regions $B_1$ and $B_3\setminus B_1$.

For $x\in B_1$ we have
\be \label{eq:Error2}
\begin{split}
f_\lambda(x) & = 2 \int_{\R^n} \frac{U_\lda(x)-U_\lda(y)}{|x-y|^{n+2\sigma}}\,\ud y
+  2 \int_{\R^n} \frac{U_\lda(y)-u_\lda (y)}{|x-y|^{n+2\sigma}}\,\ud y
-2 \int_{\R^n\setminus \om} \frac{U_\lda(x)}{|x-y|^{n+2\sigma}}\,\ud y\\
& =S_{n,\sigma}(\R^n) U_{\lda}(x)^{\frac{n+2\sigma}{n-2\sigma}} 
+ 2\int_{\R^n\setminus B_2} \frac{U_\lda(y)-u_\lda (y)}{|x-y|^{n+2\sigma}}\,\ud y + V_\Omega(x)U_\lda(x)\\
& = S_{n,\sigma}(\R^n) U_{\lda}(x)^{\frac{n+2\sigma}{n-2\sigma}} +O(\lda^{-\frac{n-2\sigma}{2}}) + V_\Omega(x) U_\lda(x) \,.
\end{split}
\ee
with
\begin{equation}
\label{eq:potential}
V_\Omega(x) = - 2 \int_{\Omega^c} \frac{\ud y}{|x-y|^{n+2\sigma}} \,.
\end{equation}
Since $\Omega^c$ has non-empty interior, we have $V_\Omega(x) \leq -\epsilon_0<0$ for all $x\in B_1$.

For $x\in B_3 \setminus B_{1}$, we have
\begin{equation*}
\begin{split}
f_\lambda(x) &
= 2 \int_{\om} \frac{U_\lda(x)-U_\lda(y)}{|x-y|^{n+2\sigma}}\,\ud y
+ 2 \int_{\om} \frac{(u_\lda-U_\lda)(x)-(u_\lda-U_\lda)(y)}{|x-y|^{n+2\sigma}}\,\ud y \\
& = 2 \int_{\R^n} \frac{U_\lda(x)-U_\lda(y)}{|x-y|^{n+2\sigma}}\,\ud y - 2 \int_{\R^n\setminus \om} \frac{U_\lda(x)-U_\lda (y)}{|x-y|^{n+2\sigma}}\,\ud y\\&
\quad+2\int_{\om} \frac{(u_\lda-U_\lda)(x)-(u_\lda-U_\lda)(y)}{|x-y|^{n+2\sigma}}\,\ud y \,.
\end{split}
\end{equation*}
Since $x\in B_3$ and $B_4\subset\Omega$, we have
\[
\begin{split}
- \int_{\R^n\setminus \om} \frac{U_\lda(x)-U_\lda (y)}{|x-y|^{n+2\sigma}}\,\ud y \leq \int_{\R^n\setminus \om} \frac{U_\lda (y)}{|x-y|^{n+2\sigma}}\,\ud y \le C\int_{\R^n\setminus \om} \frac{U_\lda (y)}{|y|^{n+2\sigma}}\,\ud y=O(\lda^{-\frac{n-2\sigma}{2}}) \,.
\end{split}
\]
Also,
\[
\begin{split}
\int_{\om} \frac{(u_\lda-U_\lda)(x)-(u_\lda-U_\lda)(y)}{|x-y|^{n+2\sigma}}\,\ud y 
&\le \int_{B_1(x)} \frac{((\eta-1)U_\lda)(x)-((\eta-1)U_\lda)(y)}{|x-y|^{n+2\sigma}}\,\ud y \\
&\quad+\int_{\om\setminus B_1(x)} \frac{((1-\eta)U_\lda)(y)}{|x-y|^{n+2\sigma}}\,\ud y\\
&\le C \max_{B_1(x)}|\nabla ((\eta-1)U_\lda)|+O(\lda^{-\frac{n-2\sigma}{2}})= O(\lda^{-\frac{n-2\sigma}{2}}) \,.
\end{split}
\]
To summarize, for $x\in B_3 \setminus B_{1}$, we have
\begin{equation}
\label{eq:Error3}
f_\lambda(x) \leq S_{n,\sigma}(\R^n) U_{\lda}(x)^{\frac{n+2\sigma}{n-2\sigma}} +O(\lda^{-\frac{n-2\sigma}{2}}) \,.
\end{equation}

Inserting the pointwise bounds \eqref{eq:Error2} and \eqref{eq:Error3} into \eqref{eq:Error2a} we obtain that
\be
\label{eq:Error4}
\begin{split}
I_{n,\sigma,\Omega}[u_\lambda] & \leq \int_{B_{1}}U_{\lda}(x) \left( S_{n,\sigma}(\R^n) U_{\lda}(x)^{\frac{n+2\sigma}{n-2\sigma}} + O(\lda^{-\frac{n-2\sigma}{2}})- \epsilon_0 U_\lda(x) \right) \ud x\\&
\quad + \int_{B_3\setminus B_{1}} u_{\lda}(x) \left( S_{n,\sigma}(\R^n) U_{\lda}(x)^{\frac{n+2\sigma}{n-2\sigma}} +O(\lda^{-\frac{n-2\sigma}{2}}) \right) \ud x\\
& \le S_{n,\sigma}(\R^n_+) \int_{B_3} U_\lda^{\frac{2n}{n-2\sigma}}\,\ud x - \va_0 \int_{B_{1}} U_\lda^{2}\,\ud x+ C \lda^{-\frac{n-2\sigma}{2}}\int_{B_{3}} U_{\lda}\,\ud x \\
& \le
\begin{cases}
S_{n,\sigma}(\R^n) -\frac{\va_0}{C} \lda^{-2\sigma}+ C\lda^{-n+2\sigma} \,,\qquad\mbox{if }n>4\sigma,\\
S_{n,\sigma}(\R^n) -\frac{\va_0}{C} \lda^{-2\sigma}\log\lambda+ C\lda^{-n+2\sigma}\,,\qquad\mbox{if }n=4\sigma \,.
\end{cases}
\end{split}
\ee
In the last bound we used $\int_{B_3} U_\lda^{\frac{2n}{n-2\sigma}}\,\ud x\leq 1$, which is analogous to \eqref{eq:Error1}, as well as 
$$
\int_{B_{1}} U_\lda^2\,\ud x\geq 
\begin{cases}
C^{-1} \lda^{-2\sigma} \,,\qquad\mbox{if }n>4\sigma,\\
C^{-1} \lda^{-2\sigma}\log\lambda\,,\qquad\mbox{if }n=4\sigma \,.
\end{cases}
$$
Combining \eqref{eq:Error1} and \eqref{eq:Error4}, we find
\begin{align*}
S_{n,\sigma}(\Omega)&\le \frac{I_{n,\sigma,\Omega}[u_\lambda]}{\left(\int_{\om} u_\lda^{\frac{2n}{n-2\sigma}}\,\ud x\right)^{\frac{n-2\sigma}{n}}} \\
& \le (1-C\lambda^{-n}) \times
\begin{cases}
S_{n,\sigma}(\R^n) -\frac{\va_0}{C} \lda^{-2\sigma}+ C\lda^{-n+2\sigma} \quad\mbox{if }n>4\sigma \,,\\
S_{n,\sigma}(\R^n) -\frac{\va_0}{C} \lda^{-2\sigma}\log\lambda+ C\lda^{-n+2\sigma} \quad\mbox{if }n=4\sigma \,.\\
\end{cases}
\end{align*}
The right side is strictly less than $S_{n,\sigma}(\R^n)$ provided that $\lda$ is sufficiently large. This completes the proof of the proposition.
\end{proof}


\section{Existence of a minimizer}

Our goal in this section is to prove Theorem \ref{mainhs}. Let $(u_k)\subset\mathring H^\sigma(\R^n_+)$. We define
$$
u_k \rightharpoonup_{\text{symm}} 0
\qquad\text{in}\ \mathring H^\sigma(\R^n_+)
$$
if for any sequences $(\lambda_k)\subset (0,\infty)$, $(a_k)\subset\R^{n-1}$ one has
$$
\lambda_k^\frac{n-2\sigma}{2} u_k(\lambda_k(x'-a_k),\lambda_k x_n) \rightharpoonup 0
\qquad\text{in}\ \mathring H^\sigma(\R^n_+) \,.
$$
Moreover, we define
$$
S^*_{n,\sigma}(\R^n_+) := \inf \left\{ \liminf_{k\to\infty} \left( \int_{\R^n_+} |u_k|^\frac{2n}{n-2\sigma} \,\ud x \right)^{-\frac{n-2\sigma}{n}} :\ I_{n,\sigma,\R^n_+}[u_k] =1 \,,\ u_k \rightharpoonup_{\text{symm}} 0 \right\}.
$$
We shall see shortly that there are, indeed, sequences $(u_k)$ with $I_{n,\sigma,\R^n_+}[u_k] =1$ and $u_k\rightharpoonup_{\text{symm}} 0$, so the infimum is well-defined.

The key step in the proof of Theorem \ref{mainhs} is

\begin{proposition}\label{mainhsprop}
Let $0<\sigma<1/2$ if $n=1$ and $\sigma\in(0,1/2)\cup(1/2,1)$ if $n\geq 2$. Then
$$
S^*_{n,\sigma}(\R^n_+) = S_{n,\sigma}(\R^n) \,.
$$
\end{proposition}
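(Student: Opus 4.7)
The plan is to prove the two inequalities $S^*_{n,\sigma}(\R^n_+)\le S_{n,\sigma}(\R^n)$ and $S^*_{n,\sigma}(\R^n_+)\ge S_{n,\sigma}(\R^n)$ separately, following the missing-mass/profile-decomposition strategy sketched in the introduction.

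For the upper bound, I will fix $W\in C_c^1(\R^n)$ and set $u_k(x):=W(x',x_n-k)$, which lies in $C_c^1(\R^n_+)$ once $k$ exceeds the diameter of $\supp W$. Using the identity $I_{n,\sigma,\R^n_+}[u]=I_{n,\sigma,\R^n}[u]-\kappa_{n,\sigma}\int_{\R^n_+}u^2 x_n^{-2\sigma}\,\ud x$ from the introduction together with a change of variable showing the Hardy term equals $\kappa_{n,\sigma}\int_{\R^n}W(y)^2(y_n+k)^{-2\sigma}\,\ud y=O(k^{-2\sigma})$, one obtains $I_{n,\sigma,\R^n_+}[u_k]\to I_{n,\sigma,\R^n}[W]$, while $\int_{\R^n_+}|u_k|^{2n/(n-2\sigma)}=\int_{\R^n}|W|^{2n/(n-2\sigma)}$. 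To check $u_k\rightharpoonup_{\text{symm}}0$, I will take arbitrary sequences $(\lambda_k,a_k)$ and examine $v_k(x):=\lambda_k^{(n-2\sigma)/2}u_k(\lambda_k(x'-a_k),\lambda_k x_n)$: its support is a box of side $\lesssim\lambda_k^{-1}$ centered at $(a_k,k/\lambda_k)$, and a change of variable yields $\|v_k\|_{L^2(K)}^2\le\min(\lambda_k^{n-2\sigma}|K|\,\|W\|_\infty^2,\,\lambda_k^{-2\sigma}\|W\|_2^2)$ on any compact $K\subset\R^n_+$. A case analysis on $\lambda_k\to 0$, $\lambda_k\to\infty$, or $\lambda_k$ bounded (in which case $k/\lambda_k\to\infty$ and the support of $v_k$ eventually misses $K$) gives $\|v_k\|_{L^2(K)}\to 0$; combined with the local compact embedding $\mathring H^\sigma(\R^n_+)\hookrightarrow L^2_{\loc}(\R^n_+)$ this implies $v_k\rightharpoonup 0$. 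Letting $W$ approach the Aubin--Talenti minimizer in Sobolev ratio then gives $S^*_{n,\sigma}(\R^n_+)\le S_{n,\sigma}(\R^n)$.

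For the lower bound, given $(u_k)$ with $I_{n,\sigma,\R^n_+}[u_k]=1$ and $u_k\rightharpoonup_{\text{symm}}0$, I will extend each $u_k$ by zero to $\R^n$. The Hardy term $H[u_k]:=\kappa_{n,\sigma}\int_{\R^n_+}u_k^2 x_n^{-2\sigma}\,\ud x$ is then bounded by the half-space fractional Hardy inequality (whose applicability needs $\sigma\neq 1/2$ when $n\ge 2$), so $I_{n,\sigma,\R^n}[u_k]=1+H[u_k]$ is also bounded. I will apply a profile decomposition in $\mathring H^\sigma(\R^n)$: along a subsequence there exist non-zero profiles $\phi^{(j)}\in\mathring H^\sigma(\R^n)$, centers $c_k^{(j)}\in\R^n$, scales $\lambda_k^{(j)}\in(0,\infty)$, and an expansion
\[
u_k=\sum_{j=1}^{J}g_k^{(j)}+r_{k,J},\qquad g_k^{(j)}(x):=(\lambda_k^{(j)})^{(n-2\sigma)/2}\phi^{(j)}\bigl(\lambda_k^{(j)}(x-c_k^{(j)})\bigr),
\]
with asymptotic orthogonality for $I_{n,\sigma,\R^n}$ and $L^{2n/(n-2\sigma)}$ and with $\lim_{J\to\infty}\limsup_k\|r_{k,J}\|_{L^{2n/(n-2\sigma)}}=0$. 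The key observation is that $u_k\rightharpoonup_{\text{symm}}0$ forces $M_k^{(j)}:=\lambda_k^{(j)}(c_k^{(j)})_n\to\infty$ for every $j$: otherwise, along a subsequence with $M_k^{(j)}\to L\in[0,\infty)$, applying the half-space symmetry with parameters $(\lambda_k^{(j)},(c_k^{(j)})')$ to $u_k$ would produce the non-zero weak limit $\phi^{(j)}(x',x_n-L)$, contradicting the hypothesis.

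The condition $M_k^{(j)}\to\infty$ will imply $H[g_k^{(j)}]\to 0$: a change of variable yields
\[
H[g_k^{(j)}]=\kappa_{n,\sigma}\int_{\{y_n>-M_k^{(j)}\}}\phi^{(j)}(y)^2\,(M_k^{(j)}+y_n)^{-2\sigma}\,\ud y,
\]
and a decomposition $\phi^{(j)}=\phi_\delta+(\phi^{(j)}-\phi_\delta)$ with $\phi_\delta\in C_c^1(\R^n)$ and $\|\phi^{(j)}-\phi_\delta\|_{\mathring H^\sigma(\R^n)}<\delta$ will control the main piece pointwise (via compactness of $\supp\phi_\delta$) and the error uniformly in $k$ via the fractional Hardy inequality applied on the shifted half-space $\{y_n>-M_k^{(j)}\}$. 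Consequently $I_{n,\sigma,\R^n_+}[g_k^{(j)}]\to I_{n,\sigma,\R^n}[\phi^{(j)}]$, the $H$-cross terms vanish by Cauchy--Schwarz with weight $x_n^{-2\sigma}$, and combining with the $I_{n,\sigma,\R^n}$-Pythagoras gives $\sum_{j}I_{n,\sigma,\R^n}[\phi^{(j)}]\le\lim_k I_{n,\sigma,\R^n_+}[u_k]=1$. Applying Sobolev on $\R^n$ to each $\phi^{(j)}$, the super-additivity $\sum a_j^{n/(n-2\sigma)}\le(\sum a_j)^{n/(n-2\sigma)}$ for $a_j\ge 0$, and the $J\to\infty$ vanishing of the remainder in $L^{2n/(n-2\sigma)}$ yields $\limsup_k\|u_k\|_{L^{2n/(n-2\sigma)}(\R^n_+)}^2\le S_{n,\sigma}(\R^n)^{-1}$, equivalent to the desired $\liminf_k(\int_{\R^n_+}|u_k|^{2n/(n-2\sigma)})^{-(n-2\sigma)/n}\ge S_{n,\sigma}(\R^n)$. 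The hard part will be the profile decomposition step itself: rigorously ruling out half-space profiles via the non-trivial weak limit argument above, and proving $H[g_k^{(j)}]\to 0$ uniformly enough to transfer Pythagoras from $I_{n,\sigma,\R^n}$ to $I_{n,\sigma,\R^n_+}$; the assumption $\sigma\neq 1/2$ for $n\ge 2$ enters precisely through the Hardy inequality on the (shifted) half-spaces.
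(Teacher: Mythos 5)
Your proof of the easy inequality $S^*_{n,\sigma}(\R^n_+)\le S_{n,\sigma}(\R^n)$ is essentially the paper's (translate a compactly supported test function to infinity; check $\rightharpoonup_{\mathrm{symm}}0$), although you verify the symmetric weak convergence via a local compactness argument in $L^2_{\loc}$ rather than the duality argument of Lemma~\ref{convmodsymm}; both are fine.

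For the main inequality $S^*_{n,\sigma}(\R^n_+)\ge S_{n,\sigma}(\R^n)$ you take a genuinely different route from the paper. The paper extracts a \emph{single} nontrivial profile via the G\'erard--Meyer--Oru refinement of the Sobolev inequality (Lemma~\ref{lem:improvedinequality}), moves it to the boundary, and then applies the missing-mass argument with the Br\'ezis--Lieb lemma, feeding the remainder back into the self-referential definition of $S^*$. You instead invoke the full G\'erard profile decomposition in $\mathring H^\sigma(\R^n)$, show every bubble escapes to infinity relative to the boundary (this part is correct, modulo the detail that the rescaling parameters that recenter $g_k^{(j)}$ should be $(1/\lambda_k^{(j)},-\lambda_k^{(j)}(c_k^{(j)})')$, the inverse of what you wrote, and that $M_k^{(j)}\to -\infty$ is ruled out because the rescaled $u_k$ would then be supported in a set shrinking to $\emptyset$), and then sum up energies with super-additivity. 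This is a valid alternative plan in principle, and it has the advantage of being conceptually transparent; the paper's approach is lighter on machinery because it never needs the full orthogonal decomposition.

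However, there is a genuine gap at the step you acknowledge as ``the hard part'': the claim that $H[g_k^{(j)}]\to 0$ by ``the fractional Hardy inequality applied on the shifted half-space $\{y_n>-M_k^{(j)}\}$'', and more fundamentally the transfer from the $I_{n,\sigma,\R^n}$-Pythagoras to the bound $\sum_j I_{n,\sigma,\R^n}[\phi^{(j)}]\le 1$. Two problems: (a) $g_k^{(j)}$ (or $\phi^{(j)}-\phi_\delta$) is a generic element of $\mathring H^\sigma(\R^n)$, not of $\mathring H^\sigma(\{y_n>-M_k^{(j)}\})$; it does not vanish along $\{y_n=-M_k^{(j)}\}$, so the fractional Hardy inequality on the shifted half-space simply does not apply to it. Indeed $\int_{\{y_n>-M\}}\phi^2(M+y_n)^{-2\sigma}\,\ud y$ need not even be finite for $\phi\in\mathring H^\sigma(\R^n)$ (the natural H\"older estimate fails because $\int_{\{y_n>-M\}}(M+y_n)^{-n}\,\ud y=\infty$ without a constraint on $y'$). (b) Since the $I_{n,\sigma,\R^n}$-Pythagoras only gives $\sum_j I_{n,\sigma,\R^n}[\phi^{(j)}]\le\lim_k I_{n,\sigma,\R^n}[u_k]=1+\lim_k H[u_k]$, you must show the $H$-energy of the bubbles really vanishes, and the naive route to this is blocked by (a). The paper's Step~3 shows exactly how to circumvent this: replace the weak limit by compactly supported approximants $w_k$ whose support grows \emph{with} $k$ (specifically $\supp w_k\subset\{|x'|\le c_k,\ |x_n|\le c_k/2\}$), so that after translation they lie in $\R^n_+$, and then control $\int_{\R^n_+}\tilde w_k^2 x_n^{-2\sigma}\,\ud x$ by splitting the $x_n$-range into $|x_n|<\epsilon c_k$ and $|x_n|\ge\epsilon c_k$ and applying H\"older with the $L^{n/2\sigma}$ norm of the weight, which is now finite thanks to the support constraint on $x'$. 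If you want to carry out your profile-decomposition version, you would need to do the analogous growing-support approximation for every profile $\phi^{(j)}$, not the naive Hardy estimate you propose.
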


The assumption $\sigma\neq1/2$ if $n\geq 2$ comes from the use of Hardy's inequality, both in Steps 1 and 3 of the proof.

Given this proposition it is easy to conclude the

\begin{proof}[Proof of Theorem \ref{mainhs}]
Let $(u_k)\subset\mathring H^\sigma(\R^n_+)$ be a minimizing sequence for $S_{n,\sigma}(\R^n_+)$ with $I_{n,\sigma,\R^n_+}[u_k] =1$ for all $k$. Assumption \eqref{eq:bindingineq} implies that $u_k\not\rightharpoonup_{\text{symm}} 0$, that is, after passing to a subsequence there are $(\lambda_k)\subset(0,\infty)$, $(a_k)\subset\R^{n-1}$ and $0\not\equiv v\in\mathring H^\sigma(\R^n_+)$ such that
$$
v_k(x) := \lambda_k^\frac{n-2\sigma}{2} u_k(\lambda_k(x'-a_k),\lambda_k x_n) \rightharpoonup v
\qquad\text{in}\ \mathring H^\sigma(\R^n_+) \,.
$$
Moreover, by Rellich's theorem after passing to a subsequence if necessary, $v_k\to v$ almost everywhere. Let $r_k:=v-v_k$. Then, by weak convergence in $\mathring H^\sigma(\R^n_+)$,
$$
1 = I_{n,\sigma,\R^n_+}[u_k] =I_{n,\sigma,\R^n_+}[v_k] = I_{n,\sigma,\R^n_+}[v] + I_{n,\sigma,\R^n_+}[r_k] + o(1) \,.
$$
Thus, $I_{n,\sigma,\R^n_+}[r_k]$ converges and
\begin{equation}
\label{eq:mmm1a}
T := \lim_{k\to\infty} I_{n,\sigma,\R^n_+}[r_k]
\qquad\text{satisfies}\qquad
1= I_{n,\sigma,\R^n_+}[v] + T \,.
\end{equation}
Moreover, by almost everywhere convergence and the Br\'ezis--Lieb lemma \cite{BrLi},
\begin{align*}
S_{n,\sigma}(\R^n_+)^{-\frac{n-2\sigma}{n}} + o(1) & = \int_{\R^n_+} |u_k|^\frac{2n}{n-2\sigma}\,\ud x \\
& = \int_{\R^n_+} |v_k|^\frac{2n}{n-2\sigma}\,\ud x \\
& = \int_{\R^n_+} |v|^\frac{2n}{n-2\sigma}\,\ud x + \int_{\R^n_+} |r_k|^\frac{2n}{n-2\sigma}\,\ud x + o(1) \,. 
\end{align*}
Thus, $\int_{\R^n_+} |r_k|^\frac{2n}{n-2\sigma}\,\ud x$ converges and
\begin{equation}
\label{eq:mmm2a}
M := \lim_{k\to\infty} \int_{\R^n_+} |r_k|^\frac{2n}{n-2\sigma}\,\ud x 
\qquad\text{satisfies}\qquad
S_{n,\sigma}(\R^n_+)^{-\frac{n-2\sigma}{n}}  = \int_{\R^n_+} |v|^\frac{2n}{n-2\sigma}\,\ud x + M \,.
\end{equation}
Clearly, by Sobolev's inequality we have
\begin{equation}
\label{eq:mmm3a}
T \geq S_{n,\sigma}(\R^n_+)\ M^\frac{n-2\sigma}{n} \,.
\end{equation}

Given \eqref{eq:mmm1a}, \eqref{eq:mmm2a} and \eqref{eq:mmm3a} the proof is concluded by a standard argument. We use the elementary fact that for $0\leq\theta\leq 1$,
\begin{equation}
\label{eq:elem}
(a-b)^\theta \geq a^\theta - b^\theta
\qquad\text{for all}\ a\geq b\geq 0 \,.
\end{equation}
Applying this with $\theta=(n-2\sigma)/n$ we find
\begin{align*}
1 & = I_{n,\sigma,\R^n_+}[v] + T \\
& \geq I_{n,\sigma,\R^n_+}[v] + S_{n,\sigma}(\R^n_+) M^\frac{n-2\sigma}{n} \\
& = I_{n,\sigma,\R^n_+}[v] + S_{n,\sigma}(\R^n_+) \left(S_{n,\sigma}(\R^n_+)^{-\frac{n-2\sigma}{n}} - \int_{\R^n_+} |v|^\frac{2n}{n-2\sigma}\,\ud x \right)^\frac{n-2\sigma}{n} \\
& \geq I_{n,\sigma,\R^n_+}[v] + 1 - S_{n,\sigma}(\R^n_+) \left( \int_{\R^n_+} |v|^\frac{2n}{n-2\sigma}\,\ud x \right)^\frac{n-2\sigma}{n} \,.
\end{align*}
Thus, we have shown that
$$
I_{n,\sigma,\R^n_+}[v] - S_{n,\sigma}(\R^n_+) \left( \int_{\R^n_+} |v|^\frac{2n}{n-2\sigma}\,\ud x \right)^\frac{n-2\sigma}{n} \leq 0 \,.
$$
Thus, equality must hold everywhere and $v$ is an optimizer. Since equality in \eqref{eq:elem} holds only if $b=0$ or $a=b$, we conclude that $M=0$, and then equality in \eqref{eq:mmm3a} implies that $T=0$. This means that $I_{n,\sigma,\R^n_+}[v]=1$ and therefore $(v_k)$ converges, in fact, \emph{strongly} in $\mathring H^\sigma(\Omega)$ to $v$. This completes the proof of the theorem.
\end{proof}

Thus we are left with the proof of Proposition \ref{mainhsprop}. We begin the proof with an auxiliary result that yields a typical sequence of functions that tends to zero in the sense of $\rightharpoonup_{\mathrm{symm}}$.

\begin{lemma}\label{convmodsymm}
Let $(c_k)\subset\R$ be a sequence with $c_k\to\infty$ and let $(w_k)\subset\mathring H^\sigma(\R^n)$ be a sequence which converges in $\mathring H^\sigma(\R^n)$ and which satisfies $\supp w_k\subset\{ x_n\geq -c_k\}$ for all $k$. Then
$$
w_k(x',x_n-c_k) \rightharpoonup_{\mathrm{symm}} 0
\qquad\text{in}\ \mathring H^\sigma(\R^n_+) \,.
$$
\end{lemma}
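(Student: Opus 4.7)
The goal is to unfold the definition: writing $v_k(x):=w_k(x',x_n-c_k)$, we must show that for every $(\lambda_k)\subset(0,\infty)$ and $(a_k)\subset\R^{n-1}$, the rescaled sequence
\[
\tilde v_k(x):=\lambda_k^{(n-2\sigma)/2}v_k(\lambda_k(x'-a_k),\lambda_k x_n)=\lambda_k^{(n-2\sigma)/2}w_k(\lambda_k(x-p_k)),\qquad p_k:=(a_k,c_k/\lambda_k),
\]
converges weakly to zero in $\mathring H^\sigma(\R^n_+)$. The crucial algebraic observation is that $\tilde v_k=T_k w_k$, where $T_k u(x):=\lambda_k^{(n-2\sigma)/2}u(\lambda_k(x-p_k))$ is the Sobolev--invariant dilation--translation symmetry on $\mathring H^\sigma(\R^n)$. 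The support hypothesis on $w_k$ automatically forces $\supp\tilde v_k\subset\R^n_+$, and the bound $I_{n,\sigma,\R^n_+}[\tilde v_k]\leq I_{n,\sigma,\R^n}[\tilde v_k]=I_{n,\sigma,\R^n}[w_k]$ ensures the sequence is uniformly bounded in $\mathring H^\sigma(\R^n_+)$.

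I would then test against an arbitrary $\phi\in C^1_c(\R^n_+)$ via the Br\'ezis--Nirenberg identity recalled in the introduction, namely
\[
\langle\tilde v_k,\phi\rangle_{\mathring H^\sigma(\R^n_+)}=\langle\tilde v_k,\phi\rangle_{\mathring H^\sigma(\R^n)}-\kappa_{n,\sigma}\int_{\R^n_+}\tilde v_k\,\phi\,x_n^{-2\sigma}\,\ud x,
\]
and show that each term on the right vanishes. For the first, using that $T_k$ is an isometry of $\mathring H^\sigma(\R^n)$ I rewrite it as $\langle w_k,T_k^{-1}\phi\rangle_{\mathring H^\sigma(\R^n)}$ and split $w_k=w+(w_k-w)$, where $w$ is the strong limit. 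The $(w_k-w)$-piece is dispatched by Cauchy--Schwarz together with $\|T_k^{-1}\phi\|_{\mathring H^\sigma(\R^n)}=\|\phi\|_{\mathring H^\sigma(\R^n)}$ and $w_k\to w$. The residual piece $\langle w,T_k^{-1}\phi\rangle_{\mathring H^\sigma(\R^n)}$ vanishes provided $T_k^{-1}\phi\rightharpoonup 0$ weakly in $\mathring H^\sigma(\R^n)$. This is the main obstacle and reduces to a standard escape-to-infinity statement for the group of dilations and translations: $T_k^{-1}\phi$ is a dilation of $\phi$ by $\lambda_k^{-1}$ followed by a translation by $\lambda_k p_k=(\lambda_k a_k,c_k)$, whose $n$-th component diverges. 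After passing to a subsequence so that $\lambda_k$ has a limit in $[0,\infty]$, the weak convergence follows in every regime---by Riemann--Lebesgue on the Fourier side when the translation dominates, and by a shrinking-support/concentrating-amplitude estimate when the dilation dominates.

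For the second term, once $\tilde v_k\rightharpoonup 0$ in $\mathring H^\sigma(\R^n)$ has been established, the continuous Sobolev embedding $\mathring H^\sigma(\R^n)\hookrightarrow L^{2n/(n-2\sigma)}(\R^n)$ upgrades this to weak $L^{2n/(n-2\sigma)}$-convergence to zero. Since $\phi$ is compactly supported in $\{x_n\geq\delta\}$ for some $\delta>0$, the weight $\phi\,x_n^{-2\sigma}$ is bounded with compact support and hence lies in the dual space $L^{2n/(n+2\sigma)}(\R^n_+)$, so the integral vanishes. Combining the two vanishing contributions with the density of $C^1_c(\R^n_+)$ and the uniform boundedness of $\tilde v_k$ yields $\tilde v_k\rightharpoonup 0$ in $\mathring H^\sigma(\R^n_+)$. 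Since $(\lambda_k)$ and $(a_k)$ were arbitrary, this is the desired convergence modulo symmetries.
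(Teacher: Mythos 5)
Your proof is correct in substance, but it takes a genuinely different route from the paper's. The paper first proves the weaker statement that the rescaled sequence tends weakly to zero in $L^{2n/(n-2\sigma)}(\R^n_+)$: it tests against $f\in L^{2n/(n+2\sigma)}(\R^n_+)$, moves the translation--dilation onto $f$ by a change of variables, and then combines the escape--to--infinity of the translated $f$ with the strong $L^{2n/(n-2\sigma)}$-convergence of $w_k$. The upgrade to weak convergence in $\mathring H^\sigma(\R^n_+)$ is then purely abstract: by the Sobolev embedding any $\mathring H^\sigma(\R^n_+)$-weak limit point is also an $L^{2n/(n-2\sigma)}$-weak limit point, hence zero. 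Your argument instead works directly with the $\mathring H^\sigma(\R^n_+)$ inner product, splits it via the identity $\langle\cdot,\cdot\rangle_{\mathring H^\sigma(\R^n_+)}=\langle\cdot,\cdot\rangle_{\mathring H^\sigma(\R^n)}-\kappa\int x_n^{-2\sigma}(\cdot)(\cdot)$, uses that the rescaling $T_k$ is unitary on $\mathring H^\sigma(\R^n)$ to transfer the escape--to--infinity to the test function $\phi$, and handles the Hardy term via the weak $L^{2n/(n-2\sigma)}$-convergence. Both routes hinge on precisely the same escape--to--infinity phenomenon forced by $c_k\to\infty$ (yours on the $\mathring H^\sigma(\R^n)$ side, the paper's on the $L^{2n/(n+2\sigma)}$ side) and on the strong convergence of $w_k$; the paper's is leaner because it needs only Sobolev duality and avoids the Br\'ezis--Nirenberg decomposition. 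One small presentational remark: you should isolate $\tilde v_k\rightharpoonup 0$ in $\mathring H^\sigma(\R^n)$ as a free-standing claim (established by testing against all of $C^1_c(\R^n)$, not only $C^1_c(\R^n_+)$) before invoking it for the Hardy term; indeed, once that claim is in hand you could also conclude more directly, since extension by zero is a bounded map from $\{u\in\mathring H^\sigma(\R^n):\supp u\subset\overline{\R^n_+}\}$ into $\mathring H^\sigma(\R^n_+)$ and hence preserves weak convergence, making the explicit two-term decomposition unnecessary.
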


\begin{proof}
We abbreviate $\tilde w_k(x)= w_k(x',x_n-c_k)$. We shall show that for any sequence $(\lambda_n)\subset(0,\infty)$, $(a_k)\subset\R^{n-1}$ one has
$$
\lambda_k^\frac{n-2\sigma}{2} \tilde w_k(\lambda_k(x'-a_k),\lambda_k x_n) \rightharpoonup 0 \qquad\text{in}\ L^\frac{2n}{n-2\sigma}(\R^n_+) \,.
$$
This implies the result, for if $v$ denotes any weak limit point in $\mathring H^\sigma(\R^n_+)$ of $\lambda_k^\frac{n-2\sigma}{2} \tilde w_k(\lambda_k(x'-\alpha_k),\lambda_k x_n)$ (which exists by weak compactness), then by Sobolev's theorem $v$ is also a weak limit point in $L^\frac{2n}{n-2\sigma}(\R^n_+)$ and therefore, by what we shall show, $v=0$.

Let $f\in L^\frac{2n}{n+2\sigma}(\R^n_+)$ (extended by zero to $\R^n$) and
$$
f_k(y',y_n) = \lambda_k^{-\frac{n-2\sigma}{2}} f(a_k + \lambda_k^{-1}y', \lambda_k^{-1}(y_n+c_k)) \,,
$$
so that
$$
\int_{\R^n_+} f(x) \lambda_k^\frac{n-2\sigma}{2} \tilde w_k(\lambda_k(x'-a_k),\lambda_k x_n) \,\ud x = \int_{\R^n} f_k(y) w_k(y)\,\ud y \,.
$$
Since $c_k\to\infty$, we have $f_k\rightharpoonup 0$ in $L^\frac{2n}{n+2\sigma}(\R^n)$. On the other hand, by Sobolev's inequality $w_n$ converges strongly in $L^\frac{2n}{n-2\sigma}(\R^n)$, and therefore $\int_{\R^n} f_k(y) w_k(y)\,\ud y \to 0$, which proves the claim.
\end{proof}


\subsection{Proof of Proposition \ref{mainhsprop}. Part 1}

We begin with the proof of the inequality
$$
S^*_{n,\sigma}(\R^n_+) \geq S_{n,\sigma}(\R^n) \,.
$$

\emph{Step 1.}
By a diagonal argument we can find a sequence $(u_k)\subset\mathring H^\sigma(\R^n_+)$ with $I_{n,\sigma,\R^n_+}[u_k]=1$ for all $k$, $u_k \rightharpoonup_{\text{symm}} 0$ and
\begin{equation}
\label{eq:sstarseq}
\lim_{k\to\infty} \int_{\R^n} |u_k|^\frac{2n}{n-2\sigma}\,\ud x \to \left( S^*_{n,\sigma}(\R^n_+) \right)^{-\frac{n}{n-2\sigma}}.
\end{equation}
We extend $u_k$ by zero to the lower half-plane and note that
$$
I_{n,\sigma,\R^n}[u_k] = I_{n,\sigma,\R^n_+}[u_k] + \kappa_{n,\sigma} \int_{\R^n_+} \frac{u_k^2}{x_n^{2\sigma}} \,\ud x \,.
$$
Therefore the normalization $I_{n,\sigma,\R^n_+}[u_k]=1$ and Hardy's inequality \cite{BoDy} imply that
\begin{equation}
\label{eq:bdd}
\sup_k I_{n,\sigma,\R^n}[u_k] <\infty \,.
\end{equation}
On the other hand, from \eqref{eq:sstarseq} and the improved Sobolev inequality of Lemma \ref{lem:improvedinequality} we conclude that 
$$
\liminf_{k\to\infty} \sup_{t>0} t^\frac{n-2\sigma}{4} \| e^{t\Delta} u_k \|_{L^\infty(\R^n)} > 0 \,.
$$
Thus, there are $(t_k)\subset(0,\infty)$, $(a_k)\subset\R^{n-1}$ and $(b_k)\subset\R$ such that
\begin{equation}
\label{eq:besovnonzero}
\liminf_{k\to\infty} t_k^\frac{n-2\sigma}{4} \left| e^{t_k\Delta} u_k(a_k,b_k) \right| > 0 \,.
\end{equation}
Let
$$
v_k(x',x_n) := t_k^\frac{n-2\sigma}{4} u_k(a_k + t_k^\frac12 x',b_k +t_k^\frac12x_n ) \,.
$$
Because of $I_{n,\sigma,\R^n}[v_k] = I_{n,\sigma,\R^n}[u_k]$, the boundedness \eqref{eq:bdd} and weak compactness, after passing to a subsequence if necessary, we may assume that $v_k \rightharpoonup v$ in $\mathring H^\sigma(\R^n)$. By Rellich's theorem, after possibly passing to another subsequence, we may also assume that $v_k \to v$ almost everywhere. By Sobolev's theorem, weak convergence in $\mathring H^\sigma(\R^n)$ implies weak convergence in $L^\frac{2n}{n-2\sigma}(\R^n)$, and therefore the identity
$$
t_k^\frac{n-2\sigma}{4} \left( e^{t_k\Delta} u_k \right)(a_k,b_k) = (4\pi)^{-n/2} \int_{\R^n} e^{-|x|^2/4} v_k(x)\,\ud x
$$
together with the fact that $e^{-|x|^2/4}\in L^\frac{2n}{n+2\sigma}(\R^n)$ yields
$$
\lim_{k\to\infty} t_k^\frac{n-2\sigma}{4} \left( e^{t_k\Delta} u_k\right)(a_k,b_k) = (4\pi)^{-n/2} \int_{\R^n} e^{-|x|^2/4} v(x)\,\ud x \,.
$$
The bound \eqref{eq:besovnonzero} now implies that $v\not\equiv 0$.

\emph{Step 2.}
We abbreviate $c_k := t_k^{-\frac12}b_k$ and claim that $c_k\to\infty$. We prove this by contradiction. Indeed, if we had $\limsup_{k\to\infty} |c_k|<\infty$, then along a subsequence $c_k\to c$ and $t_k^\frac{n-2\sigma}{4} u_k(a_k+t_k^\frac12 x',t_k^\frac12x_n)\rightharpoonup v(x',x_n-c)$ in $\mathring H^\sigma(\R^n)$ and therefore in $\mathring H^\sigma(\R^n_+)$. Since $u_k\rightharpoonup_{\text{symm}} 0$, we conclude that $v(x',x_n-c)\equiv 0$ on $\R^n_+$. Since $t_k^\frac{n-2\sigma}{4} u_k(a_k+t_k^\frac12 x',t_k^\frac12x_n)\equiv 0$ on $\R^n_-$, we also have $v(x',x_n-c)\equiv 0$ on $\R^n_-$ and therefore $v\equiv 0$, a contradiction. If we had $\liminf_{k\to\infty} c_k = -\infty$, then along a subsequence we had for every $\phi\in L^\frac{2n}{n+2\sigma}(\R^n)$ 
$$
\int_{\R^n} \phi v \,\ud x = \lim_{k\to\infty} \int_{\R^n} \phi v_k\,\ud x = \lim_{k\to\infty} \int_{x_n \geq - c_k} \phi v_k\,\ud x = 0 \,,
$$
since $\chi_{\{x_n \geq - c_k\}}\phi\to 0$ strongly in $L^\frac{2n}{n+2\sigma}(\R^n)$ by dominated convergence. Thus, again $v\equiv 0$, a contradiction. We therefore have shown that $c_k\to\infty$.

\emph{Step 3.}
Since compactly supported functions are dense in $\mathring H^\sigma(\R^n)$, there is a sequence $(w_k)\subset\mathring H^\sigma(\R^n)$ with $w_k\to v$ in $\mathring H^\sigma(\R^n)$ and $\supp w_k \subset\{ |x'|\leq c_k \,,\ |x_n|\leq c_k/2\}$. Either by construction or by Rellich's theorem after passing to a subsequence, we may assume that $w_k\to v$ almost everywhere.

Let us introduce the translated functions
$$
\tilde v_k(x',x_n) := v_k(x',x_n-c_k) \,,
\qquad
\tilde w_k(x',x_n) := w_k(x',x_n-c_k)
$$
and set $\tilde r_k := \tilde v_k - \tilde w_k$. Note that $\tilde w_k\in\mathring H^\sigma(\R^n_+)$ and therefore also $\tilde r_k\in\mathring H^\sigma(\R^n_+)$.

We claim that $I_{n,\sigma,\R^n_+}[\tilde r_k]$ converges and that
\begin{equation}
\label{eq:mmm1b}
T := \lim_{k\to\infty}I_{n,\sigma,\R^n}[\tilde r_k]
\qquad\text{satisfies}\qquad
1 = I_{n,\sigma,\R^n}[v] + T \,.
\end{equation}
To see this, we write
$$
1= I_{n,\sigma,\R^n_+}[u_k] = I_{n,\sigma,\R^n_+}[\tilde v_k] =I_{n,\sigma,\R^n_+}[\tilde w_k] + I_{n,\sigma,\R^n_+}[\tilde r_k] + \mathcal R_k
$$
with the remainder
$$
\mathcal R_k := 2 I_{n,\sigma,\R^n_+}[\tilde r_k,\tilde w_k] \,.
$$
Here we have introduced the natural bilinear form associated to the quadratic form $I_{n,\sigma,\R^n_+}$. Clearly, $w_k\to v$ in $\mathring H^\sigma(\R^n)$ implies that
$$
I_{n,\sigma,\R^n_+}[\tilde w_k] = \iint_{x_n>-c_k\,,\ y_n>-c_k} \frac{(w_k(x)-w_k(y))^2}{|x-y|^{n+2\sigma}}\,\ud x\,\ud y \to I_{n,\sigma,\R^n}[v] \,,
$$
and it remains to prove $\mathcal R_k\to 0$. We write
$$
\mathcal R_k = 2 I_{n,\sigma,\R^n}[\tilde r_k,\tilde w_k] - 2 \kappa_{n,\sigma} \int_{x_n>0} \frac{\tilde r_k \tilde w_k}{x_n^{2\sigma}} \,\ud x \,.
$$
Since $v_k\rightharpoonup v$ in $\mathring H^\sigma(\R^n)$ and $w_k\to v$ in $\mathring H^\sigma(\R^n)$, we have
$$
I_{n,\sigma,\R^n}[\tilde r_k,\tilde w_k] = I_{n,\sigma,\R^n}[v_k-w_k,w_k]
= I_{n,\sigma,\R^n}[v_k-v,w_k] + I_{n,\sigma,\R^n}[v-w_k,w_k] \to 0 \,.
$$
Moreover, by Hardy's inequality \cite{BoDy} we have
\begin{align*}
\left| \int_{x_n>0} \frac{\tilde r_k \tilde w_k}{x_n^{2\sigma}} \,\ud x \right| & \leq \left( \int_{x_n>0} \frac{\tilde r_k^2}{x_n^{2\sigma}} \,\ud x \right)^{1/2} \left( \int_{x_n>0} \frac{\tilde w_k^2}{x_n^{2\sigma}} \,\ud x \right)^{1/2} \\
& \lesssim \left( I_{n,\sigma,\R^n_+}[\tilde r_k] \right)^{1/2} \left( \int_{x_n>0} \frac{\tilde w_k^2}{x_n^{2\sigma}} \,\ud x \right)^{1/2}
\end{align*}
and, since the first square root factor remains bounded as $k\to\infty$, it suffices to show that the second one tends to zero.

Let $0<\epsilon\leq 1/2$ and split
\begin{align*}
\int_{x_n>0} \frac{\tilde w_k^2}{x_n^{2\sigma}} \,\ud x
& = \int_{x_n>-c_k} \frac{w_k^2}{(x_n+c_k)^{2\sigma}} \,\ud x \\
& \leq \int_{|x_n|< \epsilon c_k} \frac{w_k^2}{(x_n+c_k)^{2\sigma}} \,\ud x
+ \int_{|x_n|\geq \epsilon c_k} \frac{w_k^2}{(x_n+c_k)^{2\sigma}} \,\ud x \,.
\end{align*}
By the support properties of $w_k$ we can bound
$$
\int_{|x_n|\geq \epsilon c_k} \frac{w_k^2}{(x_n+c_k)^{2\sigma}} \,\ud x
\leq \left( \int_{|x_n|\geq \epsilon c_k} |w_k|^\frac{2n}{n-2\sigma}\,\ud x \right)^{\frac{n-2\sigma}{n}} \left( \int_{|x'|<c_k \,,\ |x_n|\leq c_k/2} \frac{\ud x}{(x_n+c_k)^n} \right)^\frac{2\sigma}{n}
$$
and
$$
\int_{|x'|<c_k \,,\ |x_n|\leq c_k/2} \frac{\ud x}{(x_n+c_k)^n} \lesssim
c_k^{n-1} \int_{|x_n|\leq c_k/2} \frac{\ud x_n}{(x_n+c_k)^n} = \int_{|t|\leq 1/2} \frac{\ud t}{(t+1)^n} \,.
$$
Thus, since by Sobolev $w_k\to w$ in $L^\frac{2n}{n-2\sigma}(\R^n)$, and consequently $\int_{|x_n|\geq \epsilon c_k} |w_k|^\frac{2n}{n-2\sigma}\,\ud x\to 0$, we find
$$
\lim_{k\to\infty} \int_{|x_n|\geq \epsilon c_k} \frac{w_k^2}{(x_n+c_k)^{2\sigma}} \,\ud x = 0 \,.
$$
On the other hand,
$$
\int_{|x_n|< \epsilon c_k} \frac{w_k^2}{(x_n+c_k)^{2\sigma}} \,\ud x
\leq \left( \int_{\R^n} |w_k|^\frac{2n}{n-2\sigma}\,\ud x \right)^{\frac{n-2\sigma}{n}} \left( \int_{|x'|<c_k \,,\ |x_n|< \epsilon c_k} \frac{\ud x}{(x_n+c_k)^n} \right)^\frac{2\sigma}{n}
$$
and
$$
\int_{|x'|<c_k \,,\ |x_n|\leq \epsilon c_k} \frac{\ud x}{(x_n+c_k)^n} \lesssim c_k^{n-1} \int_{|x_n|\leq \epsilon c_k} \frac{\ud x_n}{(x_n+c_k)^n}
= \int_{|t|\leq \epsilon} \frac{\ud t}{(t+1)^n} \lesssim \epsilon \,.
$$
This shows that
$$
\limsup_{k\to\infty} \int_{|x_n|< \epsilon c_k} \frac{w_k^2}{(x_n+c_k)^{2\sigma}} \,\ud x \lesssim \epsilon^\frac{2\sigma}{n} \,.
$$
Since $\epsilon>0$ can be chosen arbitrarily small, we conclude that $\mathcal R_k\to 0$.

\emph{Step 4.}
We claim that $\int_{\R^n_+} |\tilde r_k|^\frac{2n}{n-2\sigma}\,\ud x$ converges and that
\begin{equation}
\label{eq:mmm2b}
M:=\lim_{k\to\infty} \int_{\R^n_+} |\tilde r_k|^\frac{2n}{n-2\sigma}\,\ud x
\qquad\text{satisfies}\qquad
\left( S^*_{n,\sigma}(\R^n_+) \right)^{-\frac{n}{n-2\sigma}} = \int_{\R^n} |v|^\frac{2n}{n-2\sigma}\,\ud x + M \,.
\end{equation}
We recall that both $v_k$ and $w_k$ converge almost everywhere to $v$ and therefore $v_k-w_k$ converges almost everywhere to zero. Moreover, $w_k-v$ tends to zero in $L^\frac{2n}{n-2\sigma}(\R^n)$. Therefore by a slight generalization of the Br\'ezis--Lieb lemma \cite{BrLi}, which allows for an additional term that vanishes in the corresponding Lebesgue space \cite{FrLiSa}, we infer that
\begin{align*}
\left( S^*_{n,\sigma}(\R^n_+) \right)^{-\frac{n}{n-2\sigma}} + o(1) & = \int_{\R^n_+} |u_k|^\frac{2n}{n-2\sigma}\,\ud x = \int_{\R^n} |v_k|^\frac{2n}{n-2\sigma}\,\ud x \\
& = \int_{\R^n} |v|^\frac{2n}{n-2\sigma}\,\ud x + \int_{\R^n} |w_k-v_k|^\frac{2n}{n-2\sigma}\,\ud x + o(1) \,.
\end{align*}
Since
$$
\int_{\R^n} |w_k-v_k|^\frac{2n}{n-2\sigma}\,\ud x = \int_{\R^n} |\tilde r_k|^\frac{2n}{n-2\sigma}\,\ud x \,,
$$
we obtain the claim \eqref{eq:mmm2b}.

\emph{Step 5.} We claim that 
\begin{equation}
\label{eq:mmm3b}
T \geq S^*_{n,\sigma}(\R^n_+) M^\frac{n-2\sigma}{n} \,.
\end{equation}
If $M=0$, there is nothing to prove, so we may assume that $M>0$. We know from Lemma \ref{convmodsymm} that $\tilde w_k\rightharpoonup_{\text{symm}} 0$ in $\mathring H^\sigma(\R^n_+)$. Since $\tilde v_k\rightharpoonup_{\text{symm}} 0$ (which follows from $u_k\rightharpoonup_{\text{symm}} 0$ and the fact that this notion of convergence is invariant under dilations and translations parallel to the boundary), we find $\tilde r_k\rightharpoonup_{\text{symm}} 0$. Therefore, applying the definition of $S^*_{n,\sigma}(\R^n_+)$ to the sequence $(\tilde r_k/ I_{n,\sigma,\R^n_+}[\tilde r_k]^\frac12)$ and recalling that $M>0$, and therefore $T>0$, we obtain \eqref{eq:mmm3b}.

\emph{Step 6.} We are now ready to complete the proof. According to \eqref{eq:mmm1b}, \eqref{eq:mmm2b}, \eqref{eq:mmm3b} and the elementary inequality \eqref{eq:elem} we have
\begin{align*}
1 & = I_{n,\sigma,\R^n}[v] + T \\
& \geq I_{n,\sigma,\R^n}[v] + S^*_{n,\sigma}(\R^n_+) M^\frac{n-2\sigma}{n} \\
& = I_{n,\sigma,\R^n}[v] + S^*_{n,\sigma}(\R^n_+) \left( \left( S^*_{n,\sigma}(\R^n_+) \right)^{-\frac{n}{n-2\sigma}} - \int_{\R^n} |v|^\frac{2n}{n-2\sigma}\,\ud x \right)^\frac{n-2\sigma}{n} \\
& \geq I_{n,\sigma,\R^n}[v] + 1 - S^*_{n,\sigma}(\R^n_+) \left( \int_{\R^n} |v|^\frac{2n}{n-2\sigma}\,\ud x \right)^\frac{n-2\sigma}{n} \,.
\end{align*}
Thus, we have shown that
$$
I_{n,\sigma,\R^n}[v] \leq S^*_{n,\sigma}(\R^n_+) \left( \int_{\R^n} |v|^\frac{2n}{n-2\sigma}\,\ud x \right)^\frac{n-2\sigma}{n} \,.
$$
Bounding the left side from below by $S_{n,\sigma}(\R^n) \left( \int_{\R^n} |v|^\frac{2n}{n-2\sigma}\,\ud x \right)^\frac{n-2\sigma}{n}$ and recalling that $v\not\equiv 0$ we obtain the claimed inequality.


\subsection{Proof of Proposition \ref{mainhsprop}. Part 2}

We briefly sketch the proof of the reverse inequality
$$
S^*_{n,\sigma}(\R^n_+) \leq S_{n,\sigma}(\R^n) \,.
$$
Let $0\not\equiv w\in\mathring H^\sigma(\R^n)$ with compact support and let $(c_k)\subset\R$ be a sequence with $c_k\to\infty$. Let $\tilde w_k(x)=w(x',x_n-c_k)$. These functions belong to $\mathring H^\sigma[\R^n_+]$ for all sufficiently large $k$ and by an argument as in Step 3 of the previous proof we see that
$$
I_{n,\sigma,\R^n_+}[\tilde w_k] \to I_{n,\sigma,\R^n}[w] \,.
$$
Moreover, by Lemma \ref{convmodsymm} we know that $\tilde w_k\rightharpoonup_{\mathrm{symm}} 0$, and therefore also $\tilde w_k/I_{n,\sigma,\R^n_+}[\tilde w_k]^{1/2}\rightharpoonup_{\mathrm{symm}} 0$. Thus,
$$
S_{n,\sigma}^*(\R^n_+) \leq \liminf_{k\to\infty} \frac{I_{n,\sigma,\R^n_+}[\tilde w_k]}{\left( \int_{\R^n_+} |\tilde w_k|^\frac{2n}{n-2\sigma} \,\ud x\right)^\frac{n-2\sigma}{n}} = \frac{I_{n,\sigma,\R^n}[w]}{\left( \int_{\R^n} |w|^\frac{2n}{n-2\sigma} \,\ud x\right)^\frac{n-2\sigma}{n}} \,.
$$
Taking the infimum over all $w$ we obtain the claimed inequality.


\section{Bound on the half-space minimizer}
\label{sec:estimateofminimizer}

Our goal in this section is to prove Proposition \ref{prop:extremalfunct}.


\subsection{Reduction to a local bound}\label{sec:reduction}

The first step in the proof of Proposition \ref{prop:extremalfunct} is to reduce the global bound to a local statement. This argument is based on the invariance of equation \eqref{eq:minimizer} under inversion in a sphere. This inversion invariance of the minimization problem, which leads to the invariance of the Euler--Lagrange equation \eqref{eq:minimizer}, was already crucially used in Lieb's work \cite{Lieb} in the dual form of the Hardy--Littlewood--Sobolev inequality. In the local case $\sigma=1$ it appears famously in \cite{CaGiSp} and has also been used before in the case of the fractional Laplacian.

For $r>0$ we shall use the notation
$$
B_r^+ = \{ x\in\R^n_+:\ |x|< r\} \,.
$$
The local bound that we will prove in this section is

\begin{proposition}\label{prop:extremalfunctloc}
Let $n\geq 2$ and $1/2<\sigma<1$. Let $0\not\equiv u\in\mathring H^\sigma(\R^n_+)$ be non-negative and satisfy \eqref{eq:minimizer}. Then there are $0<c\leq C<\infty$ such that
$$
c x_n^{2\sigma-1} \leq u(x) \leq C x_n^{2\sigma-1}
\qquad\text{for}\ x\in B_1^+ \,.
$$
\end{proposition}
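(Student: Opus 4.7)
The plan is to combine an interior $L^\infty$ bound obtained by Moser iteration with an explicit ``regional-harmonic'' barrier $\phi(x) = x_n^{2\sigma-1}$ and a comparison principle for the regional fractional Laplacian
\[
L u(x) := \mathrm{P.V.} \int_{\R^n_+} \frac{u(x)-u(y)}{|x-y|^{n+2\sigma}}\,\ud y.
\]
This will produce matching pointwise upper and lower bounds for $u$ in $B_1^+$.

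The first step is to upgrade $u \in \mathring H^\sigma(\R^n_+) \cap L^{2n/(n-2\sigma)}(\R^n_+)$ to $u \in L^\infty(B_2^+)$. Since \eqref{eq:minimizer} has critical nonlinearity, I would run a Br\'ezis--Kato/Moser iteration on half-balls: test the equation against $\eta^2 u \min\{u,M\}^{2\beta}$ with cutoffs $\eta$ supported slightly larger than $B_2^+$, apply the fractional Sobolev inequality on $\R^n_+$ to bound $\|\eta u \min\{u,M\}^\beta\|_{L^{2n/(n-2\sigma)}}$ by the bilinear form, and iterate in $\beta$ to raise integrability from $L^{2n/(n-2\sigma)}$ to every $L^p$ ($p < \infty$); a De Giorgi--Nash--Moser step for the regional fractional Laplacian then yields $u \in L^\infty(B_2^+)$, and continuity up to $\partial\R^n_+\cap B_2$ with $u=0$ there.

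The key analytic input is the identity
\[
L\phi(x) = 0, \qquad \phi(x) := x_n^{2\sigma-1}, \quad x\in\R^n_+,
\]
valid for $\sigma > 1/2$. It follows from translation invariance in $x'$ and scaling, which force $L\phi$ to be of the form $\kappa\, x_n^{-1}$, together with an explicit one-variable reduction (integrating out $y'$ to produce a one-dimensional kernel of order $|x_n-y_n|^{-1-2\sigma}$) that identifies $2\sigma-1$ as the nontrivial exponent at which the constant $\kappa$ vanishes. Taking a smooth truncation $\tilde\phi$ of $\phi$ supported in $B_3^+$ and using Step~1, I would verify that for $C$ large $C\tilde\phi$ is a pointwise supersolution of \eqref{eq:minimizer} in $B_1^+$ with $C\tilde\phi \geq u$ on $\R^n_+\setminus B_1^+$ (the tail part uses $u \in L^{2n/(n-2\sigma)}$ together with decay estimates obtained by iterating the equation); the comparison principle for $L$, an immediate consequence of the positivity of the associated bilinear form, then gives $u \leq C x_n^{2\sigma-1}$ in $B_1^+$. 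For the matching lower bound, a strong maximum principle (again from the positivity of the bilinear form and $u\not\equiv 0$) produces an interior point where $u>0$, and a subsolution $\psi = c\,\eta(x)\,x_n^{2\sigma-1}$ with small $c>0$ and cutoff $\eta$ supported in $B_2^+$ yields, via the same comparison argument applied in reverse, $u \geq c x_n^{2\sigma-1}$ in $B_1^+$.

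The main obstacle will be the rigorous justification of the identity $L\phi\equiv 0$ and the accurate control of the cutoff and tail errors: the integrand combines a singularity at $y=x$ with a boundary singularity near $\partial\R^n_+$, and the hypothesis $\sigma>1/2$ enters crucially, since only then does $\phi$ vanish on $\partial\R^n_+$ and do the principal value integrals converge absolutely outside an arbitrarily small neighborhood of $x$. A related technical difficulty is formulating the comparison principle so that it applies to the barriers $C\tilde\phi$ and $\psi$, which are not themselves in $\mathring H^\sigma(\R^n_+)$; this requires working with bilinear forms localized to $B_2^+$ and carefully using the monotonicity of $L$ on non-negative functions to absorb the error terms coming from $\tilde\phi-\phi$.
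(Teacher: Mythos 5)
Your proposal is a genuinely different strategy from the paper's. The paper does not use a barrier argument at all: it converts the equation into the integral identity $u = a_{n,\sigma}L_\sigma^{-1}u^{(n+2\sigma)/(n-2\sigma)}$, imports the sharp two-sided Green's function bounds for the regional fractional Laplacian from Chen--Kim--Song (Proposition~\ref{green}), proves $u\in L^\infty$ by a Br\'ezis--Lieb-type bootstrap on the resolvent kernel, upgrades this to the global decay $u(x)\lesssim (1+|x|)^{-(n+2\sigma)}$ via inversion, and then reads off $u(x)\lesssim x_n^{2\sigma-1}$ (and the matching lower bound) directly from the Green's function representation. Your plan replaces the Green's function input with the explicit regional-harmonic function $\phi(x)=x_n^{2\sigma-1}$ and a comparison principle. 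The identity $L\phi=0$ for $1/2<\sigma<1$ is indeed correct --- it is the Bogdan--Burdzy--Chen fact that $x_n^{2\sigma-1}$ is harmonic for the censored $2\sigma$-stable process, and your scaling argument correctly shows $L\phi=\kappa x_n^{-1}$ with a constant $\kappa$ that vanishes precisely at exponent $2\sigma-1$.

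However, there is a genuine gap in the comparison step, and it is exactly at the place where the nonlocality bites. To conclude $u\le C\tilde\phi$ in $B_1^+$ from the comparison principle for $L$ you need $C\tilde\phi\ge u$ on \emph{all} of $\R^n_+\setminus B_1^+$ (not just on $\partial B_1^+$). Two things go wrong. First, any truncation $\tilde\phi$ of $\phi$ vanishes outside a compact set while $u>0$ there (strong maximum principle), so $C\tilde\phi\ge u$ on the far region is simply false for every $C$; you must therefore absorb the far tail into the operator side, which changes the sign analysis. Second, and more seriously, on the near part $(B_2^+\setminus B_1^+)\cap\{x_n\ \text{small}\}$ one has $\tilde\phi(x)=x_n^{2\sigma-1}$, so the inequality $C\tilde\phi(x)\ge u(x)$ required there is \emph{precisely} the estimate $u\lesssim x_n^{2\sigma-1}$ you are trying to prove, just in a slightly shifted half-ball. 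This is a circularity; it is not resolved by $u\in L^{2n/(n-2\sigma)}$ or by interior $L^\infty$ bounds, both of which degenerate in exactly this boundary zone. The symmetric difficulty occurs for the subsolution $\psi=c\eta\,x_n^{2\sigma-1}$ in the lower bound. Breaking this circularity requires an additional quantitative tool --- a boundary Harnack inequality with right-hand side for the regional fractional Laplacian, or heat-kernel/Green's-function estimates for $L$ in a half-ball --- and at that point one is essentially back to the Chen--Kim--Song input the paper uses. The remaining pieces of your plan (Moser iteration for local $L^\infty$, the Kato-type inequality for $(u-C\phi)^+$, the $H^\sigma$-membership of $\tilde\phi$ which holds exactly when $2\sigma-1>\sigma-1/2$, i.e.\ $\sigma>1/2$) are fine; the gap is the boundary comparison.
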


Clearly, by translation and dilation invariance a similar bound holds for half-balls of any radius centered at any point in $\R^{n-1}\times\{0\}$.

Accepting this proposition for the moment we now give the

\begin{proof}[Proof of Proposition \ref{prop:extremalfunct}] Let $u\in \mathring H^\sigma(\R^n_+)$ be a non-negative solution of \eqref{eq:minimizer}. Then by Proposition \ref{prop:extremalfunctloc} there are $0<c\leq C<\infty$ such that
$$
c x_n^{2\sigma-1} \leq u(x) \leq C x_n^{2\sigma-1}
\qquad\text{for all}\ x\in B_1^+ \,.
$$
On the other hand, $u_1(y)=|y|^{2\sigma-n} u(y/|y|^2) $ is also a solution of \eqref{eq:minimizer}. Thus, again by Proposition \ref{prop:extremalfunctloc} there are $0<c'\leq C'<\infty$ such that $c' y_n^{2\sigma-1}\leq u_1(y) \leq C' y_n^{2\sigma-1}$ for all $y\in B_1^+$. This means
$$
c' x_n^{2\sigma -1} |x|^{-n+2-2\sigma} \leq u(x) \leq C' x_n^{2\sigma -1} |x|^{-n+2-2\sigma}
\qquad\text{for all}\ x\in\R^n_+\setminus B_1^+ \,.
$$
Combining the two bounds we obtain the proposition.
\end{proof}


\subsection{Green's function bound}

For the proof of Proposition \ref{prop:extremalfunctloc} we need a bound on the Green's function of the fractional Dirichlet Laplacian $L_\sigma$. This operator is defined as an operator from $\mathring H^\sigma(\R^n_+)$ to the dual space $\mathring H^\sigma(\R^n_+)'$ by
$$
\left( L_\sigma u\right)(x) :=2a_{n,\sigma}\int_{\R^n_+} \frac{u(x)-u(y)}{|x-y|^{n+2\sigma}}\,\ud y \,.
$$
Here
$$
a_{n,\sigma} = 2^{2\sigma-1} \pi^{-\frac n2} \frac{\Gamma(\tfrac{n+2\sigma}{2})}{|\Gamma(-\sigma)|}
$$
is a positive constant which is chosen such that $a_{n,\sigma} I_{n,\sigma,\R^n}[u]= \int |\xi|^{2\sigma} |\hat u(\xi)|^2\,\ud\xi$ for $u\in \mathring H^\sigma(\R^n)$. We emphasize that $L_\sigma$ is \emph{not} the power $\sigma$ of the Dirichlet Laplacian on $\R^n_+$, see, e.g., \cite{Fr}. It will be important for us that the inverse of $L_\sigma$ is an integral operator on whose integral kernel $G$, the Green's function, we have two-sided bounds. These bounds are due to Chen--Kim--Song \cite{ChKiSo} and we are grateful to Prof. Z.-Q. Chen for discussions on them.

\begin{proposition}\label{green}
Let $n\geq 2$ and $1/2<\sigma<1$. Then there are constants $0<c\leq C<\infty$ such that the Green's function $G$ of $L_\sigma$ satisfies for all $x,y\in\R^n_+$
$$
\frac{c}{|x-y|^{n-2\sigma}} \min\left\{1,~ \frac{x_n^{2\sigma-1}y_n^{2\sigma-1}}{|y-x|^{4\sigma-2}} \right\}\le G(x,y)\le 
\frac{C}{|x-y|^{n-2\sigma}} \min\left\{1,~ \frac{x_n^{2\sigma-1}y_n^{2\sigma-1}}{|y-x|^{4\sigma-2}} \right\} \,.
$$
Moreover, the Green's function $\tilde G$ of $L_\sigma+1$ satisfies for all $x,y\in\R^n_+$
$$
\tilde G(x,y)\le 
\frac{C}{|x-y|^{n-2\sigma}} \min\left\{1,~ \frac{1}{|y-x|^{4\sigma}} \right\} \,.
$$
\end{proposition}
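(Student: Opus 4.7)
The plan is to recognize $L_\sigma$, up to the multiplicative constant $2a_{n,\sigma}$, as the self-adjoint operator on $L^2(\R^n_+)$ associated with the Dirichlet form $a_{n,\sigma} I_{n,\sigma,\R^n_+}$; equivalently, it is the infinitesimal generator of the \emph{censored} symmetric $2\sigma$-stable process on $\R^n_+$. Because $1/2<\sigma<1$, the stability index $2\sigma$ lies in $(1,2)$, which is exactly the regime in which the censored process does not approach the boundary in finite time and its potential theory is well developed. Since $\partial\R^n_+$ is smooth, hence $C^{1,1}$, the sharp two-sided Green function estimates of Chen--Kim--Song \cite{ChKiSo} apply. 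Specializing their formula to the half-space, where the distance from $x$ to $\partial\R^n_+$ equals $x_n$, yields the claimed two-sided bound on $G$ after absorbing $2a_{n,\sigma}$ into the generic constants $c,C$.

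For $\tilde G$ I would use the semigroup representation
\[
\tilde G(x,y) = \int_0^\infty e^{-t}\, p_t(x,y)\,\ud t,
\]
where $p_t$ is the transition density of the censored stable process. Chen--Kim--Song also provide two-sided heat kernel estimates; the upper bound relevant here is
\[
p_t(x,y) \leq C \min\!\left\{ t^{-n/(2\sigma)},\ \frac{t}{|x-y|^{n+2\sigma}} \right\}.
\]
For $|x-y|\leq 1$ I would invoke the elementary inequality $\tilde G\leq G$ (which follows from the resolvent identity $\tilde G=G-G\tilde G$ and the positivity of both kernels) together with the Green function bound just established. For $|x-y|\geq 1$ I would split the $t$-integral at $t=|x-y|^{2\sigma}$: the small-$t$ contribution is bounded by $|x-y|^{-n-2\sigma}\int_0^\infty e^{-t} t\,\ud t \lesssim |x-y|^{-n-2\sigma}$, while the large-$t$ contribution is bounded by a multiple of $e^{-|x-y|^{2\sigma}}|x-y|^{-n}$, which decays super-polynomially and is therefore negligible compared to $|x-y|^{-n-2\sigma}$. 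Combining the two cases gives exactly the stated upper bound on $\tilde G$.

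The main obstacle is the bookkeeping required to match the probabilistic normalization used by Chen--Kim--Song with the analytic definition of $L_\sigma$ via the quadratic form $a_{n,\sigma} I_{n,\sigma,\R^n_+}$ (this only affects the values of the constants $c,C$), and to identify the boundary-distance factor in their general $C^{1,1}$ statement with the explicit $x_n$ in the half-space. Once this identification is in place, both parts of the proposition reduce to a direct quotation of the Green and heat kernel estimates of \cite{ChKiSo} followed by the short integration above.
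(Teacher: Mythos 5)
Your identification of $L_\sigma$ (up to the constant $2a_{n,\sigma}$) with the generator of the censored $2\sigma$-stable process on $\R^n_+$, and the remark that $\sigma>1/2$ is precisely the regime where the censored process is non-trivial, are both correct and are exactly the starting point the paper uses; both proofs ultimately rest on Chen--Kim--Song \cite{ChKiSo}. Your treatment of $\tilde G$ is also fine and essentially equivalent to the paper's: the semigroup representation, the upper heat-kernel bound $p_t(x,y)\lesssim\min\{t^{-n/(2\sigma)},\,t|x-y|^{-(n+2\sigma)}\}$, and the split at $t=|x-y|^{2\sigma}$ give the stated bound, and the inequality $\tilde G\le G$ that you use for $|x-y|\le 1$ is immediate from $\tilde G=\int_0^\infty e^{-t}K_t\,\ud t\le\int_0^\infty K_t\,\ud t=G$ (no need for the resolvent identity).

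The gap is in the $G$ estimate. You propose to \emph{directly} quote the two-sided Green function estimates of \cite{ChKiSo} on the grounds that $\partial\R^n_+$ is $C^{1,1}$. But the Green function estimate in \cite{ChKiSo} (their Corollary~1.2) is proved for \emph{bounded} $C^{1,1}$ open sets, and their underlying heat kernel estimate (Theorem~1.1) holds only for $t$ in a fixed bounded interval $[0,T]$, with constants depending on $T$. The half-space is unbounded, so neither result can simply be quoted. The paper circumvents this by first noting that, on $\R^n_+$, the change of variables $(x,y,t)\mapsto(\lambda x,\lambda y,\lambda^{2\sigma}t)$ extends the bound on $K_t$ from $t\in[0,T]$ to all $t>0$ — this uses the scaling invariance of the half-space in an essential way — and only then integrates $G(x,y)=\int_0^\infty K_t(x,y)\,\ud t$, following the method (but not the statement) of \cite[Cor.\ 1.2]{ChKiSo}. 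Without this scaling step your argument for $G$ does not close, so you should make it explicit.
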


The bound on $\tilde G$ is not optimal but enough for our purposes. With little more effort the following proof would also provide an optimal two-sided bound.

\begin{proof}
The integral kernel $K_t$ of $e^{-tL_\sigma}$ satisfies for all $t>0$, $x,y\in\R^n_+$,
\begin{align*}
& \frac{c}{t^{\frac d{2\sigma}}} \min\left\{ 1,\frac{t^\frac{d+2\sigma}{2\sigma}}{|x-y|^{d+2\sigma}} \right\} \min\left\{ 1,\frac{x_n^{2\sigma-1}}{t^\frac{2\sigma-1}{2\sigma}} \right\} \min\left\{ 1,\frac{y_n^{2\sigma-1}}{t^\frac{2\sigma-1}{2\sigma}} \right\} \\
& \qquad \leq K_t(x,y) \leq \frac{C}{t^{\frac d{2\sigma}}} \min\left\{ 1,\frac{t^\frac{d+2\sigma}{2\sigma}}{|x-y|^{d+2\sigma}} \right\} \min\left\{ 1,\frac{x_n^{2\sigma-1}}{t^\frac{2\sigma-1}{2\sigma}} \right\} \min\left\{ 1,\frac{y_n^{2\sigma-1}}{t^\frac{2\sigma-1}{2\sigma}} \right\}.
\end{align*}
This bound is stated in \cite[Thm. 1.1]{ChKiSo} for $t\in [0,T]$ and $x,y\in\R^n_+$ with constants $c$ and $C$ depending on $T$, but by scaling the variables $x,y$ and $t$ the bound for $t=1$ implies the bound for general $t>0$. Since $G(x,y) = \int_0^\infty K_t(x,y)\,\ud t$, we obtain the bounds on $G$ by tedious, but elementary computations as in the proof of \cite[Cor. 1.2]{ChKiSo}. Similarly, $\tilde G(x,y) = \int_0^\infty e^{-t} K_t(x,y)\,\ud t$ and using $K_t(x,y)\leq C t^{-\frac{d}{2\sigma}} \min\{1, t^\frac{d+2\sigma}{2\sigma}|x-y|^{-d+2\sigma}\}$ we obtain the bound on $\tilde G$.
\end{proof}


\subsection{Proof of Proposition \ref{prop:extremalfunctloc}}

We write equation \eqref{eq:minimizer} as
$$
L_\sigma u = a_{n,\sigma} u^\frac{n+2\sigma}{n-2\sigma} \,.
$$
Since $u\in\mathring H^\sigma(\R^n_+)$ we have, by the Sobolev inequality, $u\in L^\frac{2n}{n-2\sigma}(\R^n_+)$, and therefore $u^\frac{n+2\sigma}{n-2\sigma}\in L^\frac{2n}{n+2\sigma}(\R^n_+)$. Since $0\leq G(x,y) \leq C |x-y|^{-n+2\sigma}$ by Proposition \ref{green}, the Hardy--Littlewood--Sobolev inequality implies that $L_\sigma^{-1} u^\frac{n+2\sigma}{n-2\sigma}$ is well-defined and belongs to $L^\frac{2n}{n-2\sigma}(\R^n_+)$. In this way, the equation becomes
\begin{equation}
\label{eq:inter-minimizer}
u(x) = a_{n,\sigma} \int_{\R^n_+} G(x,y) u(y)^\frac{n+2\sigma}{n-2\sigma}\,\ud y 
\qquad\text{for}\ x\in\R^n_+ \,.
\end{equation}

\emph{Step 1. Upper bound.}
We will show that
\begin{equation}
\label{eq:boundedness1}
u(x) \leq \frac{C}{(1+|x|)^{n+2\sigma}}
\qquad\text{for all}\ x\in\R^n \,.
\end{equation}
Indeed, once this is proved, we can combine it with \eqref{eq:inter-minimizer} and the bound on $G$ from Proposition \ref{green} to obtain
\[
u(x) \le C \int_{\R^n_+}\frac{x_n^{2\sigma-1}y_n^{2\sigma-1}}{|y-x|^{n+2\sigma-2}}\frac{1}{(1+|y|)^{n+2\sigma}}\,\ud y\le C' x_n^{2\sigma-1} \,,
\]
which is the claimed upper bound.

It remains to prove \eqref{eq:boundedness1}. Proceeding as in the derivation of \eqref{eq:inter-minimizer} we obtain
\begin{equation*}
u(x) = \int_{\R^n_+} \tilde G(x,y) \left(a_{n,\sigma} u(y)^\frac{n+2\sigma}{n-2\sigma}+ u(y)\right)\ud y 
\qquad\text{for}\ x\in\R^n_+ \,.
\end{equation*}
We use the bound on $\tilde G$ from Proposition \ref{green} and obtain
\begin{equation}
\label{eq:inter-minimizer2}
u(x) \leq h + Y*(Au)
\end{equation}
where
$$
h(x) := \int_{\{ u \leq 1\}} \tilde G(x,y) \left(a_{n,\sigma} u(y)^\frac{n+2\sigma}{n-2\sigma}+ u(y)\right)\ud y \,,
$$
and
$$
Y(x) := C |x|^{-d+2\sigma} \min\{ 1, |x|^{-4\sigma}\}
\qquad\text{and}\qquad
A(x) := \chi_{\{u\geq 1\}} \left( a_{n,\sigma} u(x)^\frac{4\sigma}{n-2\sigma} -1 \right) .
$$
Since $0\leq h \leq (a_{n,\sigma}+1) \int_{\R^n} Y(x)\,\ud x$ and $Y\in L^1(\R^n)$, we have $h\in L^\infty(\R^n)$. Moreover, $Y\in L^\frac{n}{n-2\sigma}_{\rm weak}(\R^n)$, $A\in L^\frac n{2\sigma}(\R^n)$ and $A$ has support of finite measure. Thus, \cite[Lem. A.1]{BrLi2} implies that $\int_{\{u\geq 1\}} u^q\,\ud x <\infty$ for any $q<\infty$. In particular, $Au\in L^p(\R^n)$ for some $p>\frac{n}{2\sigma}$ and therefore $Y*(Au)\in L^\infty(\R^n)$. In view of \eqref{eq:inter-minimizer2} we obtain $u\in L^\infty(\R^n)$ and, in particular, \eqref{eq:boundedness1} for $|x|\leq 1$. (In passing, we note that instead of \cite{BrLi2} we could also have used \cite[Cor. 1.1]{Li}.)

Similar as in Subsection \ref{sec:reduction} we note that $u_1(y)=|y|^{2\sigma-n} u(y/|y|^2)$ is also a solution of \eqref{eq:minimizer} and therefore, by the above argument $u_1\in L^\infty(\R^n)$. This proves \eqref{eq:boundedness1} for $|x|\geq 1$ and therefore concludes the proof of \eqref{eq:boundedness1}.

\emph{Step 2. Lower bound.}
Since $u$ is continuous in $\R^n_+$ (this follows easily from the equation since we have already shown that $u\in L^\infty(\R^n_+)$) and positive by the maximum principle, there is a $c>0$ such that $u\geq c$ in $B_1(3e_n)$. Thus, by the lower bound on $G$ in Proposition \ref{green},
\[
u(x)\ge c \int_{B_1(3e_n)} G(x,y)\,\ud y\ge c' x_n^{2\sigma-1}\quad\mbox{for }x\in B_1^+.
\]


\section{Verifying the strict inequality \eqref{eq:mainbddass}}

Our goal in this section is to prove Theorem \ref{mainbddbinding}. The overall proof strategy resembles that of Theorem \ref{mainhsbinding}, with the important difference, however, that the minimizer of $S_{n,\sigma}(\R^n_+)$ is not known explicitly. We therefore begin by collecting some facts about this function.

We are assuming that the infimum $S_{n,\sigma}(\R^n_+)$ is attained. Let $\Theta$ be a minimizer, normalized so that $\|\Theta\|_{L^\frac{2n}{n-2\sigma}(\R^n_+)}=1$. As discussed before the statement of Proposition \ref{prop:extremalfunct} we may assume that $\Theta$ is non-negative. For $\lambda>0$ let
$$
\Theta_\lambda(x) = \lambda^\frac{n-2\sigma}{2} \Theta(\lambda x) \,.
$$
The Euler--Lagrange equation of the minimization problem reads
$$
2 \int_{\R^n_+} \frac{\Theta_\lambda(x)-\Theta_\lambda(y)}{|x-y|^{n+2\sigma}} \,\ud y = S_{n,\sigma}(\R^n_+) \Theta_\lambda(x)^\frac{n+2\sigma}{n-2\sigma} \,.
$$
In the following proof we will need the following estimates which follow from the bound of Proposition \ref{prop:extremalfunct} (which can be applied to a multiple of $\Theta$. For any $p>\frac{n}{n-1}$ we have
\begin{equation}
\label{eq:thetacomp1}
\int_{\R^n_+} \Theta_\lambda^p \,\ud x = C_p \lambda^{-n+ \frac{p(n-2\sigma)}{2}}
\qquad\text{with}\ C_p = \int_{\R^n_+} \Theta^p \,\ud x <\infty
\end{equation}
and, for any fixed $0<\rho<\infty$,
\begin{equation}
\label{eq:thetacomp2}
\int_{\R^n_+\setminus B_\rho} \Theta_\lambda^p \,\ud x = O( \lambda^{-\frac{p(n+2\sigma-2)}{2}}) \,.
\end{equation}
(To prove the latter bound we split the region of integration into the sets where $x_n\geq |x'|$ and where $x_n<|x'|$.) Moreover, for $0<p<\frac{n}{n-1}$ and again for any fixed $0<\rho<\infty$,
\begin{equation}
\label{eq:thetacomp4}
\int_{B_\rho} \Theta_\lambda^p \,\ud x = O( \lambda^{-\frac{p(n+2\sigma-2)}{2}}) \,.
\end{equation}
Finally, for any fixed $0<\rho<\infty$,
\begin{equation}
\label{eq:thetacomp3}
\int_{\R^n_+\setminus B_\rho} \frac{\Theta_\lambda}{|x|^{n+2\sigma}} \,\ud x = O( \lambda^{-\frac{n+2\sigma-2}{2}}) \,.
\end{equation}

After these preliminaries we are ready to give the

\begin{proof}[Proof of Theorem \ref{mainbddbinding}]
Since the assumption and the conclusion of the proposition are invariant under translations and dilations of $\Omega$, we may assume that
$$
B_4^+ \subset \Omega \,.
$$
Let $\eta$ be a cut-off function as in the proof of Proposition \ref{stricths} and put
$$
\theta_\lambda = \eta \Theta_\lambda \,.
$$
This function belongs to $C^1_c(\Omega)$ and we will estimate $\|\theta_\lambda\|_\frac{2n}{n-2\sigma}$ and $I_{n,\sigma,\Omega}[\theta_\lambda]$ as $\lambda\to\infty$.

By the normalization of $\Theta_\lambda$, \eqref{eq:thetacomp1} and \eqref{eq:thetacomp2} we obtain
\be\label{eq:up-Error1}
\int_{\Omega} \theta_\lda^{\frac{2n}{n-2\sigma}} \,\ud x= 1- O(\lda^{-\frac{n(n+2\sigma-2)}{n-2\sigma}}) \,.
\ee

In order to bound $I_{n,\sigma,\Omega}[\theta_\lambda]$ we write
\begin{equation}
\label{eq:up-Error2a}
I_{n,\sigma,\Omega}[\theta_\lambda] = \int_\Omega \theta_\lambda(x) g_\lambda(x)\,\ud x
\end{equation}
with
$$
g_\lambda(x) := 2 \int_\Omega \frac{\theta_\lambda(x)-\theta_\lambda(y)}{|x-y|^{n+2\sigma}}\,\ud y
$$
and estimate $g_\lambda$ pointwise in $B_1^+$ and $B_3^+\setminus B_1^+$.

For $x\in B_{1}^+$, using the fact that $\Omega\subset\R^n_+$ and \eqref{eq:thetacomp3},
\be \label{eq:up-Error2}
\begin{split}
g_\lambda(x) & = 2 \int_{\R^n_+} \frac{\Theta_\lda(x)-\Theta_\lda(y)}{|x-y|^{n+2\sigma}}\,\ud y
+  2 \int_{\R^n_+} \frac{\Theta_\lda(y)-\theta_\lda (y)}{|x-y|^{n+2\sigma}}\,\ud y
-2 \int_{\R^n_+\setminus \om} \frac{\Theta_\lda(x)}{|x-y|^{n+2\sigma}}\,\ud y\\
& =S_{n,\sigma}(\R^n_+) \Theta_{\lda}(x)^{\frac{n+2\sigma}{n-2\sigma}} 
+ 2\int_{\R^n_+\setminus B_2^+} \frac{\Theta_\lda(y)-\theta_\lda (y)}{|x-y|^{n+2\sigma}}\,\ud y + W_\Omega(x)\Theta_\lda(x)\\
& \leq S_{n,\sigma}(\R^n_+) \Theta_{\lda}(x)^{\frac{n+2\sigma}{n-2\sigma}} 
+ 2\int_{\R^n_+\setminus B_2^+} \frac{\Theta_\lda(y)}{|x-y|^{n+2\sigma}}\,\ud y + W_\Omega(x)\Theta_\lda(x)\\
& = S_{n,\sigma}(\R^n_+) \Theta_{\lda}(x)^{\frac{n+2\sigma}{n-2\sigma}} +O(\lda^{-\frac{n+2\sigma-2}{2}}) + W_\Omega(x) \Theta_\lda(x)
\end{split}
\ee
with
$$
W_\Omega(x) := -2 \int_{\R^n_+\setminus\Omega} \frac{\ud y}{|x-y|^{n+2\sigma}} \,.
$$
Since $\R^n_+\setminus\Omega$ contains an interior point, we have $W_\Omega(x) \leq -\epsilon_0<0$ for all $x\in B_1^+$.

Next, let $x\in B_3^+\setminus B_1^+$ and write
$$
g_\lambda(x) = 2 \eta(x) \int_{\om}  \frac{\Theta_\lda(x)-\Theta_\lda(y)}{|x-y|^{n+2\sigma}}\,\ud y + 2 \int_{\om}  \frac{(\eta(x)-\eta(y))\Theta_\lda(y)}{|x-y|^{n+2\sigma}}\,\ud y \,.
$$
We have, using $B_4^+\subset\Omega\subset\R^n_+$ and \eqref{eq:thetacomp3},
\begin{align*}
2 \int_\Omega \frac{\Theta_\lda(x)-\Theta_\lda(y)}{|x-y|^{n+2\sigma}}\,\ud y & = S_{n,\sigma}(\R^n_+) \Theta_\lambda(x)^\frac{n+2\sigma}{n-2\sigma} - 2 \int_{\R^n_+\setminus\om}  \frac{\Theta_\lda(x)-\Theta_\lda(y)}{|x-y|^{n+2\sigma}}\,\ud y \\
& \leq S_{n,\sigma}(\R^n_+) \Theta_\lambda(x)^\frac{n+2\sigma}{n-2\sigma} + 2 \int_{\R^n_+\setminus\om}  \frac{\Theta_\lda(y)}{|x-y|^{n+2\sigma}}\,\ud y \\
& \leq S_{n,\sigma}(\R^n_+) \Theta_\lambda(x)^\frac{n+2\sigma}{n-2\sigma} + C \int_{\R^n_+\setminus\om}  \frac{\Theta_\lda(y)}{|y|^{n+2\sigma}}\,\ud y \\
& \leq S_{n,\sigma}(\R^n_+) \Theta_\lambda(x)^\frac{n+2\sigma}{n-2\sigma} + O(\lambda^{-\frac{n+2\sigma-2}{2}}) \,.
\end{align*}

Moreover,
\begin{align*}
2 \int_{\om}  \frac{(\eta(x)-\eta(y))\Theta_\lda(y)}{|x-y|^{n+2\sigma}}\,\ud y
& =  2 \int_{B_1^+}  \frac{(\eta(x)-\eta(y))\Theta_\lda(y)}{|x-y|^{n+2\sigma}}\,\ud y \\
& \qquad + 2 \int_{\om\setminus B_4^+}  \frac{(\eta(x)-\eta(y))\Theta_\lda(y)}{|x-y|^{n+2\sigma}}\,\ud y \\
& \qquad + 2 \int_{B_4^+\setminus B_1^+}  \frac{(\eta(x)-\eta(y))\Theta_\lda(y)}{|x-y|^{n+2\sigma}}\,\ud y \,.
\end{align*}
Let us discuss the three terms on the right side separately. We have
$$
2 \int_{B_1^+}  \frac{(\eta(x)-\eta(y))\Theta_\lda(y)}{|x-y|^{n+2\sigma}}\,\ud y = 0
\qquad\text{if}\ x\in B_2^+
$$
and, if $x\in B_3^+\setminus B_2^+$, by \eqref{eq:thetacomp4},
$$
2 \int_{B_1^+}  \frac{(\eta(x)-\eta(y))\Theta_\lda(y)}{|x-y|^{n+2\sigma}}\,\ud y \leq 2 \int_{B_1^+}  \frac{\Theta_\lda(y)}{|x-y|^{n+2\sigma}}\,\ud y \leq C \int_{B_1^+}  \Theta_\lda(y)\,\ud y = O( \lambda^{-\frac{n+2\sigma-2}{2}}) \,. 
$$
We have
$$
2 \int_{\om\setminus B_4^+}  \frac{(\eta(x)-\eta(y))\Theta_\lda(y)}{|x-y|^{n+2\sigma}}\,\ud y
\leq 2 \int_{\om\setminus B_4^+}  \frac{\Theta_\lda(y)}{|x-y|^{n+2\sigma}}\,\ud y
\leq C \int_{\R^n_+\setminus B_4^+}  \frac{\Theta_\lda(y)}{|y|^{n+2\sigma}}\,\ud y = O( \lambda^{-\frac{n+2\sigma-2}{2}}) \,. 
$$
Finally, we estimate the last term in an integral sense. We have
\begin{align*}
& 2 \int_{B_3^+\setminus B_1^+} \theta_\lambda(x) \int_{B_4^+\setminus B_1^+}  \frac{(\eta(x)-\eta(y))\Theta_\lda(y)}{|x-y|^{n+2\sigma}}\,\ud y \,\ud x \\
& \qquad = 2 \int_{B^4\setminus B_1^+} \Theta_\lambda(x) \eta(x) \int_{B_4^+\setminus B_1^+}  \frac{(\eta(x)-\eta(y))\Theta_\lda(y)}{|x-y|^{n+2\sigma}}\,\ud y \,\ud x \\
& \qquad = \iint_{(B_4^+\setminus B_1^+)\times(B_4^+\setminus B_1^+)}  \frac{(\eta(x)-\eta(y))^2 \Theta_\lambda(x) \Theta_\lda(y)}{|x-y|^{n+2\sigma}}\,\ud y \,\ud x \\
& \qquad \leq C \iint_{(B_4^+\setminus B_1^+)\times(B_4^+\setminus B_1^+)}  \frac{\Theta_\lambda(x) \Theta_\lda(y)}{|x-y|^{n+2\sigma-2}}\,\ud y \,\ud x \,.
\end{align*}
We now use Proposition \ref{prop:extremalfunct} to bound $\Theta(x) \lesssim |x|^{-n+1}$ and obtain
\begin{align*}
& \iint_{(B_4^+\setminus B_1^+)\times(B_4^+\setminus B_1^+)}  \frac{\Theta_\lambda(x) \Theta_\lda(y)}{|x-y|^{n+2\sigma-2}}\,\ud y \,\ud x \\
& \qquad \lesssim \lambda^{-n-2\sigma+2} \iint_{(B_4^+\setminus B_1^+)\times(B_4^+\setminus B_1^+)}  \frac{\ud y \,\ud x}{|x|^{n-1} |x-y|^{n+2\sigma-2} |y|^{n-1}} \\
& \qquad \lesssim \lambda^{-n-2\sigma+2} \int_{B_4^+\setminus B_1^+}  \frac{\ud x}{|x|^{2n+2\sigma-4}} \\
& \qquad \lesssim \lambda^{-n-2\sigma+2} \,.
\end{align*}

To summarize we have shown that for $x\in B_3^+\setminus B_1^+$ we have
\begin{equation}
\label{eq:up-Error3}
g_\lambda(x) \leq S_{n,\sigma}(\R^n_+) \eta(x) \Theta_\lambda(x)^\frac{n+2\sigma}{n-2\sigma} + O(\lambda^{-\frac{n+2\sigma-2}{n}}) + \tilde g_\lambda(x)
\end{equation}
with
$$
\int_{B_3^+\setminus B_1^+} \theta_\lambda(x) \tilde g_\lambda(x) \,\ud x = O(\mathcal \lambda^{-n-2\sigma+2} )
$$

Let us insert the bounds \eqref{eq:up-Error2} and \eqref{eq:up-Error3} into \eqref{eq:up-Error2a}. We obtain with the help of \eqref{eq:thetacomp4}
\begin{align*}
I_{n,\sigma,\Omega}[\theta_\lambda] & \leq \int_{B_1^+} \Theta_\lambda(x) \left( S_{n,\sigma}(\R^n_+) \Theta_\lambda(x)^\frac{n+2\sigma}{n-2\sigma} + O(\lambda^{-\frac{n+2\sigma-2}{2}}) -\epsilon_0 \Theta_\lambda(x) \right) \ud x \\
& \qquad + \int_{B_3^+\setminus B_1^+} \theta_\lambda(x) \left( S_{n,\sigma}(\R^n_+) \Theta_\lambda(x)^\frac{n+2\sigma}{n-2\sigma} + O(\lambda^{-\frac{n+2\sigma-2}{2}} \right) \ud x \\
& \qquad + O(\lambda^{-n-2\sigma+2}) \\
& \leq S_{n,\sigma}(\R^n_+) \int_{B_3^+} \Theta_\lambda(x)^\frac{2n}{n-2\sigma} \,\ud x - \epsilon_0 \int_{B_1^+} \Theta_\lambda(x)^2 \,\ud x + O(\lambda^{-n-2\sigma+2}) \,.
\end{align*}
By \eqref{eq:thetacomp1} and the normalization of $\Theta$, we have
$$
\int_{B_3^+} \Theta_\lambda(x)^\frac{2n}{n-2\sigma} \,\ud x \leq 1 \,.
$$
Next, if $n\geq 3$ we have $2>\frac{n}{n-1}$ (this is where the assumption $n\neq 2$ enters!), and therefore by \eqref{eq:thetacomp1} and \eqref{eq:thetacomp2}
$$
\int_{B_1^+} \Theta_\lambda(x)^2 \,\ud x = C_2 \lambda^{-2\sigma} +  O(\lambda^{-n-2\sigma+2}) \,.
$$
Combining these bounds with \eqref{eq:up-Error1} we finally obtain
\begin{align*}
S_{n,\sigma}(\Omega) & \leq \frac{I_{n,\sigma,\Omega}[\theta_\lambda]}{\left( \int_\Omega \theta_\lambda^\frac{2n}{n-2\sigma}\,\ud x\right)^\frac{n-2\sigma}{n}} \\
& \leq \left(1-C \lambda^{-\frac{n(n+2\sigma-2)}{n-2\sigma}} \right) \left( S_{n,\sigma}(\R^n_+) - C_2^{-1} \epsilon_0 \lambda^{-2\sigma} + O(\lambda^{-n-2\sigma+2}) \right).
\end{align*}
The right side is strictly less than $S_{n,\sigma}(\R^n_+)$ provided that $\lambda$ is sufficiently large. This completes the proof of the theorem for $n\geq 3$.

Finally, let $n=2$. (Note that we assume $\sigma>1/2$ and so the case $n=2$ might be void if it is true that $S_{n,\sigma}(\R^n_+)$ is not attained for $n< 4\sigma$.) The proof is essentially the same as for $n\geq 3$, except that we use the \emph{lower} bound from Proposition \ref{prop:extremalfunct} to deduce that $\int_{B_1^+} \Theta_\lambda(x)^2\,\ud x \geq c \lambda^{-2\sigma}\ln\lambda$. Thus, we have again $S_{n,\sigma}(\Omega)< S_{n,\sigma}(\R^2_+)$, as claimed.
\end{proof}


\section{The case of a bounded domain}

\subsection{Outline of the strategy}

In this section we prove Theorem \ref{mainbdd}. We set
$$
S^{**}_{n,\sigma}(\Omega) := \inf \left\{ \liminf_{k\to\infty} \left( \int_{\Omega} |u_k|^\frac{2n}{n-2\sigma} \,\ud x \right)^{-\frac{n-2\sigma}{n}} :\ I_{n,\sigma,\Omega}[u_k] =1 \,,\ u_k \rightharpoonup 0 \ \text{in}\ \mathring H^\sigma(\Omega) \right\} \,.
$$
The key step in the proof of Theorem \ref{mainbdd} is the computation of this number.

\begin{proposition}\label{thresholdbdd}
Let $0<\sigma<1/2$ if $n=1$ and $0<\sigma<1$ if $n\geq 2$. Let $\Omega\subset\R^n$ be a bounded set with $C^1$ boundary. Then
$$
S^{**}_{n,\sigma}(\Omega) = S_{n,\sigma}(\R^n_+) \,.
$$
\end{proposition}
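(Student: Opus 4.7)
The proof splits into the two inequalities $S^{**}_{n,\sigma}(\Omega)\geq S_{n,\sigma}(\R^n_+)$ and $S^{**}_{n,\sigma}(\Omega)\leq S_{n,\sigma}(\R^n_+)$, to be established separately.

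For the lower bound, I would take any sequence $(u_k)\subset\mathring H^\sigma(\Omega)$ with $I_{n,\sigma,\Omega}[u_k]=1$ and $u_k\rightharpoonup 0$. Since $\Omega$ is bounded with $C^1$ boundary, Rellich's theorem gives $u_k\to 0$ in $L^p(\Omega)$ for every $p<2n/(n-2\sigma)$. The plan is to apply a concentration-compactness argument to the bounded positive measures
$$
\ud\nu_k := |u_k|^{\frac{2n}{n-2\sigma}}\,\ud x,
\qquad
\ud\mu_k := \left(\int_\Omega \frac{|u_k(x)-u_k(y)|^2}{|x-y|^{n+2\sigma}}\,\ud y\right)\ud x
$$
on $\overline\Omega$. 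Passing to a subsequence, $\nu_k\stackrel{*}{\rightharpoonup}\nu$ and $\mu_k\stackrel{*}{\rightharpoonup}\mu$ as finite positive measures, and the $L^p_{\mathrm{loc}}$-convergence to zero forces $\nu$ to be purely atomic, $\nu=\sum_j\nu_j\delta_{x_j}$ with $x_j\in\overline\Omega$. A blow-up analysis around each $x_j$ at the natural concentration scale yields a bubble inequality
$$
\nu_j^{\frac{n-2\sigma}{n}}\leq S_*(x_j)^{-1}\mu(\{x_j\}),
\qquad
S_*(x_j)=\begin{cases}S_{n,\sigma}(\R^n)&x_j\in\Omega,\\ S_{n,\sigma}(\R^n_+)&x_j\in\partial\Omega,\end{cases}
$$
the boundary case arising because the $C^1$-regularity of $\partial\Omega$ forces the rescaled domains $\lambda_k(\Omega-x_j)$ to exhaust a half-space. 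By Lemma~\ref{bindingsimple} one has $S_{n,\sigma}(\R^n_+)\leq S_{n,\sigma}(\R^n)$, so $S_*(x_j)\geq S_{n,\sigma}(\R^n_+)$ in all cases. Summing, using $\sum_j\mu(\{x_j\})\leq 1$ and the elementary subadditivity $(\sum_j a_j)^{(n-2\sigma)/n}\leq\sum_j a_j^{(n-2\sigma)/n}$ (valid since $(n-2\sigma)/n\leq 1$), one obtains
$$
\liminf_{k\to\infty}\left(\int_\Omega|u_k|^{\frac{2n}{n-2\sigma}}\,\ud x\right)^{\frac{n-2\sigma}{n}}=\left(\sum_j\nu_j\right)^{\frac{n-2\sigma}{n}}\leq\frac{1}{S_{n,\sigma}(\R^n_+)},
$$
which is the lower bound.

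For the upper bound I would construct an explicit test sequence. Pick any $x_0\in\partial\Omega$; after a rigid motion assume $x_0=0$ with inward normal $e_n$ and $\partial\Omega$ locally given by $x_n=f(x')$ with $f(0)=0$, $\nabla f(0)=0$. Given $\epsilon>0$, choose $\phi\in C^1_c(\R^n_+)$ with $\supp\phi\subset\{x_n\geq\delta\}$ for some $\delta>0$, $I_{n,\sigma,\R^n_+}[\phi]=1$, and $\int_{\R^n_+}|\phi|^{2n/(n-2\sigma)}\,\ud x\geq S_{n,\sigma}(\R^n_+)^{-n/(n-2\sigma)}-\epsilon$; this is possible by the density of $C^1_c(\R^n_+)$ in $\mathring H^\sigma(\R^n_+)$. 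Set $\phi_k(x):=\lambda_k^{(n-2\sigma)/2}\phi(\lambda_k x)$ with $\lambda_k\to\infty$. The $C^1$-flatness of $\partial\Omega$ at $0$ together with $\supp\phi\subset\{x_n\geq\delta\}$ forces $\supp\phi_k\subset\Omega$ for large $k$, so $\phi_k\in C^1_c(\Omega)$; since the supports shrink to $\{0\}$ we also have $\phi_k\rightharpoonup 0$ in $\mathring H^\sigma(\Omega)$. Using the scale invariance of $I_{n,\sigma,\R^n_+}$ and controlling the missing tail $I_{n,\sigma,\R^n_+}[\phi_k]-I_{n,\sigma,\Omega}[\phi_k]$ by the decay of $\phi$ together with the fact that points of $\R^n_+\setminus\Omega$ that see $\supp\phi_k$ are pushed to infinity after unscaling, one concludes $I_{n,\sigma,\Omega}[\phi_k]\to 1$ and $\int_\Omega|\phi_k|^{2n/(n-2\sigma)}\,\ud x\to\int_{\R^n_+}|\phi|^{2n/(n-2\sigma)}\,\ud x$. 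Testing the definition of $S^{**}_{n,\sigma}(\Omega)$ with $(\phi_k)$ and sending $\epsilon\to 0$ yields $S^{**}_{n,\sigma}(\Omega)\leq S_{n,\sigma}(\R^n_+)$.

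The technical heart of the argument is the boundary blow-up in the lower bound. When a concentration point $x_j$ lies on $\partial\Omega$, one must rescale $u_k$ around $x_j$ at the correct rate and show that the rescaled functions converge weakly to a non-trivial limit in $\mathring H^\sigma(\R^n_+)$, with the non-local form $I_{n,\sigma,\Omega}$ passing to $I_{n,\sigma,\R^n_+}$ in the limit. The $C^1$ regularity of $\partial\Omega$ is exactly the hypothesis needed so that the rescaled domains, viewed in the inward-normal frame at $x_j$, converge locally to a half-space; this is what produces the constant $S_{n,\sigma}(\R^n_+)$ rather than some intermediate or degenerate value. The blow-up analysis is structurally parallel to Steps~3--5 in the proof of Proposition~\ref{mainhsprop}, with rescaling around concentration points playing the role of the dilation and boundary-parallel translation symmetries used there.
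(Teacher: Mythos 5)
Your upper bound is essentially the paper's Part~2 argument: concentrate a near-optimal compactly-supported test function for $S_{n,\sigma}(\R^n_+)$ at a boundary point and exploit scale invariance. However, you gloss over a real issue: the boundary is only $C^1$, not flat, so $\Omega$ near $x_0$ is the region above a graph $x_n=f(x')$ rather than a half-space, and the identification of $I_{n,\sigma,\Omega}[\phi_k]$ with $I_{n,\sigma,\R^n_+}[\phi_k]$ up to small errors requires straightening the boundary. The paper does this via the explicit change of variables $\Phi(x)=(x',x_n-\phi(x'))$ in Lemma~\ref{changeofvar}, which gives kernel bounds $|\Phi^{-1}(\xi)-\Phi^{-1}(\eta)|^{-(n+2\sigma)}=(1+O(\epsilon))|\xi-\eta|^{-(n+2\sigma)}$ on small balls; without this you only have convergence of the domains, not a quantitative comparison of the non-local forms, and your ``tail pushed to infinity'' sketch does not close the gap.

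Your lower bound takes a genuinely different route from the paper. You invoke a Lions-type concentration--compactness lemma: weak-$*$ limits of the measures $\nu_k$ and $\mu_k$, atomicity of $\nu$ when $u_k\rightharpoonup 0$, and a bubble inequality whose constant is $S_{n,\sigma}(\R^n)$ at interior points and $S_{n,\sigma}(\R^n_+)$ at boundary points. This is plausible in outline, but two steps are genuinely non-trivial and are left unaddressed. First, the non-local ``energy density'' $\ud\mu_k(x)=\bigl(\int_\Omega|u_k(x)-u_k(y)|^2|x-y|^{-n-2\sigma}\,\ud y\bigr)\ud x$ does not localize the quadratic form in the naive sense: to run the CC machinery one needs commutator estimates relating $I_{n,\sigma,\Omega}[\chi u_k]$ to $\int\chi^2\,\ud\mu_k$, and these are precisely what distinguishes the non-local setting from the $|\nabla u|^2$ case; citing the classical lemma is not enough. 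Second, the boundary bubble inequality with constant $S_{n,\sigma}(\R^n_+)$ again requires a rescaled blow-up in which $I_{n,\sigma,\Omega}$ passes to $I_{n,\sigma,\R^n_+}$, which is the same boundary-straightening issue as above. The paper avoids the CC framework entirely: it proves a Sobolev inequality with $L^2$-remainder, $I_{n,\sigma,\Omega}[u]\geq(1-\epsilon)S_{n,\sigma}(\R^n_+)\bigl(\int|u|^{2n/(n-2\sigma)}\bigr)^{(n-2\sigma)/n}-C_\epsilon\int|u|^2$ (Proposition~\ref{almostineq}), obtained by a partition of unity near the boundary, the change of variables of Lemma~\ref{changeofvar}, and the half-space Sobolev inequality on each piece, and then simply applies Rellich's theorem to kill the $L^2$ term for a weakly null sequence. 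This is shorter, it isolates the boundary straightening into a single clean lemma used for both bounds, and it sidesteps the localization subtleties that your CC approach would have to confront head-on.
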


Accepting this proposition for the moment we give the

\begin{proof}[Proof of Theorem \ref{mainbdd}]
Let $(u_k)$ be a minimizing sequence for $S_{n,\sigma}(\Omega)$, which is normalized in $\mathring H^\sigma(\Omega)$. Then, up to passing to a subsequence, $u_k \rightharpoonup u$ in $\mathring H^\sigma(\Omega)$. Moreover, by Rellich's theorem after passing to another subsequence if necessary, $u_k\to u$ almost everywhere. Let $r_k:=u_k-u$. Then, by weak convergence in $\mathring H^\sigma(\Omega)$,
$$
1 = I_{n,\sigma,\Omega}[u_k] = I_{n,\sigma,\Omega}[u] + I_{n,\sigma,\Omega}[r_k] + o(1) \,.
$$
Thus, $I_{n,\sigma,\Omega}[r_k]$ converges and
\begin{equation}
\label{eq:mmm1}
T := \lim_{k\to\infty} I_{n,\sigma,\Omega}[r_k]
\qquad\text{satisfies}\qquad
1= I_{n,\sigma,\Omega}[u] + T \,.
\end{equation}
Moreover, by almost everywhere convergence and the Br\'ezis--Lieb lemma \cite{BrLi},
$$
S_{n,\sigma}(\Omega)^{-\frac{n-2\sigma}{n}} + o(1) = \int_\Omega |u_k|^\frac{2n}{n-2\sigma}\,\ud x = \int_\Omega |u|^\frac{2n}{n-2\sigma}\,\ud x + \int_\Omega |r_k|^\frac{2n}{n-2\sigma}\,\ud x + o(1) \,. 
$$
Thus, $\int_\Omega |r_k|^\frac{2n}{n-2\sigma}\,\ud x$ converges and
\begin{equation}
\label{eq:mmm2}
M := \lim_{k\to\infty} \int_\Omega |r_k|^\frac{2n}{n-2\sigma}\,\ud x 
\qquad\text{satisfies}\qquad
S_{n,\sigma}(\Omega)^{-\frac{n-2\sigma}{n}}  = \int_\Omega |u|^\frac{2n}{n-2\sigma}\,\ud x + M \,.
\end{equation}
Moreover, by Proposition \ref{thresholdbdd},
\begin{equation}
\label{eq:mmm3}
T \geq S_{n,\sigma}(\R^n_+)\ M^\frac{n-2\sigma}{n} \,.
\end{equation}
(Here one distinguishes the cases $M=0$, where the inequality is trivial, and $M>0$, where one can apply the definition of $S^{**}_{n,\sigma}(\Omega)$.)

Given \eqref{eq:mmm1}, \eqref{eq:mmm2} and \eqref{eq:mmm3} the proof is concluded by the same arguments as before. We use the elementary inequality \eqref{eq:elem} with $\theta=(n-2\sigma)/n$ and find
\begin{align*}
1 & = I_{n,\sigma,\Omega}[u] + T \\
& \geq I_{n,\sigma,\Omega}[u] + S_{n,\sigma}(\R^n_+) M^\frac{n-2\sigma}{n} \\
& = I_{n,\sigma,\Omega}[u] + \left( S_{n,\sigma}(\R^n_+) - S_{n,\sigma}(\Omega) \right) M^\frac{n-2\sigma}{n} + S_{n,\sigma}(\Omega) \left(S_{n,\sigma}(\Omega)^{-\frac{n-2\sigma}{n}} - \int_\Omega |u|^\frac{2n}{n-2\sigma}\,\ud x \right)^\frac{n-2\sigma}{n} \\
& \geq I_{n,\sigma,\Omega}[u] + \left( S_{n,\sigma}(\R^n_+) - S_{n,\sigma}(\Omega) \right) M^\frac{n-2\sigma}{n} + 1 - S_{n,\sigma}(\Omega) \left( \int_\Omega |u|^\frac{2n}{n-2\sigma}\,\ud x \right)^\frac{n-2\sigma}{n} \,.
\end{align*}
Thus, we have shown that
$$
I_{n,\sigma,\Omega}[u] - S_{n,\sigma}(\Omega) \left( \int_\Omega |u|^\frac{2n}{n-2\sigma}\,\ud x \right)^\frac{n-2\sigma}{n} + \left( S_{n,\sigma}(\R^n_+) - S_{n,\sigma}(\Omega) \right) M^\frac{n-2\sigma}{n} \leq 0 \,.
$$
Since $I_{n,\sigma,\Omega}[u] \geq S_{n,\sigma}(\Omega) \left( \int_\Omega |u|^\frac{2n}{n-2\sigma}\,\ud x \right)^\frac{n-2\sigma}{n}$ and since $S_{n,\sigma}(\R^n_+) > S_{n,\sigma}(\Omega)$ by assumption, we deduce that $M=0$, so $u\not\equiv 0$, and then
$$
I_{n,\sigma,\Omega}[u] \leq S_{n,\sigma}(\Omega) \left( \int_\Omega |u|^\frac{2n}{n-2\sigma}\,\ud x \right)^\frac{n-2\sigma}{n},
$$
implies that $u$ is an optimizer. Finally, inequality \eqref{eq:mmm3} must be an equality and therefore $T=0$. This means that $I_{n,\sigma,\Omega}[u]=1$ and therefore $(u_k)$ converges, in fact, \emph{strongly} in $\mathring H^\sigma(\Omega)$ to $u$. This completes the proof of the theorem.
\end{proof}

Thus, we are left with proving Proposition \ref{thresholdbdd}. For the proof of the inequality $S^{**}_{n,\sigma}(\Omega)\geq S_{n,\sigma}(\R^n_+)$ (which is the only thing needed in the proof of Theorem \ref{mainbdd}) we use the following bound.

\begin{proposition}\label{almostineq}
Let $0<\sigma<1/2$ if $n=1$ and $0<\sigma<1$ if $n\geq 2$. Let $\Omega\subset\R^n$ be a bounded set with $C^1$ boundary. For every $\epsilon>0$ there is a $C_\epsilon<\infty$ such that for all $u\in\mathring H^\sigma(\Omega)$
$$
I_{n,\sigma,\Omega}[u] \geq (1-\epsilon) S_{n,\sigma}(\R^n_+) \left( \int_\Omega |u|^\frac{2n}{n-2\sigma} \,\ud x \right)^\frac{n-2\sigma}{n} - C_\epsilon \int_\Omega |u|^2\,\ud x \,.
$$
\end{proposition}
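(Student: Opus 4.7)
The strategy is to localize the problem to small balls on which either the $\R^n$ or the $\R^n_+$ Sobolev inequality applies, and to combine the local estimates via an IMS-type identity together with a converse H\"older bound. Write $q = 2n/(n-2\sigma)$. Given $\epsilon>0$, fix $r_\epsilon>0$ small enough that around every $x_0\in\partial\Omega$ the $C^1$ boundary can be straightened by a $C^1$ diffeomorphism $\Psi_{x_0}$ on $B_{r_\epsilon}(x_0)$ whose differential is uniformly $\epsilon$-close to a rotation. Cover $\overline\Omega$ by finitely many balls $B_{r_\epsilon/2}(x_i)$ of two types, \emph{interior} ($B_{r_\epsilon}(x_i)\subset\Omega$) and \emph{boundary} ($x_i\in\partial\Omega$), and pick smooth nonnegative cutoffs with $\mathrm{supp}\,\phi_i\subset B_{r_\epsilon/2}(x_i)$ and $\sum_i\phi_i^2\equiv 1$ on $\overline\Omega$.

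Expanding $\sum_i((\phi_i u)(x) - (\phi_i u)(y))^2$ and using $\sum_i\phi_i^2\equiv 1$ yields the IMS identity
\begin{equation*}
\sum_i I_{n,\sigma,\Omega}[\phi_i u] = I_{n,\sigma,\Omega}[u] + \iint_{\Omega\times\Omega}\frac{u(x)u(y)\sum_i(\phi_i(x)-\phi_i(y))^2}{|x-y|^{n+2\sigma}}\,\ud x\,\ud y.
\end{equation*}
Since the $\phi_i$ are Lipschitz one has $\sum_i(\phi_i(x)-\phi_i(y))^2\leq C\min(|x-y|^2,1)$, and together with $\sigma<1$ this bounds the $y$-integral of $\sum_i(\phi_i(x)-\phi_i(y))^2|x-y|^{-n-2\sigma}$ uniformly in $x$. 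An AM--GM step $|u(x)u(y)|\leq(u(x)^2+u(y)^2)/2$ then controls the remainder by $C_\epsilon\|u\|_{L^2(\Omega)}^2$, giving $I_{n,\sigma,\Omega}[u]\geq\sum_i I_{n,\sigma,\Omega}[\phi_i u] - C_\epsilon\|u\|_{L^2(\Omega)}^2$.

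Each local piece is shown to satisfy $I_{n,\sigma,\Omega}[\phi_i u]\geq (1-C\epsilon)S_{n,\sigma}(\R^n_+)\|\phi_i u\|_{L^q(\Omega)}^2 - C_\epsilon\|\phi_i u\|_{L^2(\Omega)}^2$. For an interior ball, extending $\phi_i u$ by zero to $\R^n$ and absorbing the potential $\int_{\Omega^c}|x-y|^{-n-2\sigma}\,\ud y$ (uniformly bounded on $\mathrm{supp}\,\phi_i$) into the $L^2$ term reduces the estimate to Sobolev on $\R^n$, and Lemma \ref{bindingsimple} replaces $S_{n,\sigma}(\R^n)$ by $S_{n,\sigma}(\R^n_+)$. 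For a boundary ball, let $v$ be the pushforward of $\phi_i u$ by $\Psi_i$; the bounds $(1-C\epsilon)|x-y|\leq|\Psi_i(x)-\Psi_i(y)|\leq(1+C\epsilon)|x-y|$ and $|\det D\Psi_i| = 1+O(\epsilon)$ imply that $I_{n,\sigma,\Omega\cap B_{r_\epsilon}(x_i)}[\phi_i u]$ and $\|\phi_i u\|_{L^q(\Omega)}^q$ equal their $\Psi_i$-transported counterparts up to factors $1+O(\epsilon)$. After absorbing one more potential term into $\|v\|_{L^2}^2$, applying the Sobolev inequality on $\R^n_+$ to $v$ finishes the local estimate; interaction terms in $I_{n,\sigma,\Omega}[\phi_i u]$ from pairs with $y$ outside the local ball contribute $\leq C_\epsilon\|\phi_i u\|_{L^2}^2$ because $|x-y|$ is then bounded below on the support of $\phi_i u$.

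Summing and using $\sum_i\|\phi_i u\|_{L^2}^2 = \|u\|_{L^2}^2$ yields $I_{n,\sigma,\Omega}[u]\geq(1-C\epsilon)S_{n,\sigma}(\R^n_+)\sum_i\|\phi_i u\|_{L^q(\Omega)}^2 - C'_\epsilon\|u\|_{L^2(\Omega)}^2$. The recombination is completed by the converse bound $\sum_i\|\phi_i u\|_{L^q(\Omega)}^2\geq\|u\|_{L^q(\Omega)}^2$, which follows by writing $|u|^q = \sum_i\phi_i^2|u|^q = \sum_i|\phi_i u|^2|u|^{q-2}$, applying H\"older with conjugate exponents $q/2$ and $q/(q-2)$ to each summand, and dividing by $\|u\|_{L^q}^{q-2}$. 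A final rescaling $\epsilon\mapsto\epsilon/C$ produces the announced inequality. The main technical difficulty is the boundary-flattening step: one must quantitatively verify that a $C^1$ change of variables whose differential is $\epsilon$-close to a rotation distorts the non-local form $I_{n,\sigma}$, which couples all length scales through the kernel $|x-y|^{-n-2\sigma}$, by at most $1+O(\epsilon)$, and this is where both the $C^1$ regularity of $\partial\Omega$ and the smallness of the cover scale $r_\epsilon$ are needed.
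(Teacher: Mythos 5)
Your proposal is correct and follows essentially the same route as the paper: an IMS partition of unity $\sum\phi_i^2\equiv1$ with Lipschitz cutoffs, a Schur/AM--GM bound on the commutator remainder giving $-C_\epsilon\|u\|_2^2$, a boundary-flattening change of variables distorting the kernel by $1+O(\epsilon)$ for the boundary pieces (the paper's Lemma~\ref{changeofvar}), the $\R^n$ Sobolev inequality plus the bound $S_{n,\sigma}(\R^n)\geq S_{n,\sigma}(\R^n_+)$ for the interior piece, and recombination via $\sum_i\|\phi_i u\|_q^2\geq\|u\|_q^2$. The only cosmetic difference is that the paper chooses the straightening map $(x',x_n)\mapsto(x',x_n-\phi(x'))$ so the Jacobian is \emph{exactly} one, whereas you allow $|\det D\Psi_i|=1+O(\epsilon)$; both work, but the paper's choice slightly simplifies the $L^q$ bookkeeping.
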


Let us use this proposition to give the

\begin{proof}[Proof of Proposition \ref{thresholdbdd}. Part 1]
We prove that $S^{**}_{n,\sigma}(\Omega)\geq S_{n,\sigma}(\R^n_+)$. Let $(u_k)\subset\mathring H^\sigma(\Omega)$ with $I_{n,\sigma,\Omega}[u_k]=1$ and $u_k\rightharpoonup 0$ in $\mathring H^\sigma(\Omega)$. For any $\epsilon>0$ we have by Proposition~\ref{almostineq}
$$
I_{n,\sigma,\Omega}[u_k] \geq (1-\epsilon) S_{n,\sigma}(\R^n_+) \left( \int_\Omega |u_k|^\frac{2n}{n-2\sigma}\,\ud x \right)^\frac{n-2\sigma}{n} - C_\epsilon \int_\Omega |u_k|^2\,\ud x \,.
$$
By Rellich's theorem we have $u_k\to 0$ in $L^2(\Omega)$ and therefore
$$
1 \geq (1-\epsilon) S_{n,\sigma}(\R^n_+) \limsup_{k\to\infty} \left( \int_\Omega |u_k|^\frac{2n}{n-2\sigma}\,\ud x \right)^\frac{n-2\sigma}{n} \,.
$$
Since $\epsilon>0$ is arbitrary, we obtain
$$
\limsup_{k\to\infty} \left( \int_\Omega |u_k|^\frac{2n}{n-2\sigma}\,\ud x \right)^\frac{n-2\sigma}{n} \leq S_{n,\sigma}(\R^n_+)^{-1} \,,
$$
which is the claimed inequality.
\end{proof}


\subsection{Proof of Proposition \ref{almostineq}}

The proof of Proposition \ref{almostineq} is based on a simple straightening of the boundary, which appears in the proof of the following lemma. It essentially appears already as \cite[Lemma 14]{FrGe}, but we include the simple proof for the sake of completeness.

\begin{lemma}\label{changeofvar}
Let $0<\sigma<1/2$ if $n=1$ and $0<\sigma<1$ if $n\geq 2$. Let $\Omega\subset\R^n$ be a bounded set with $C^1$ boundary and let $\epsilon>0$.

\begin{enumerate}
\item There are $\delta>0$ and $C<\infty$ such that for every ball $B$ of radius $\delta$ centered at a point in $\partial\Omega$ and for every $u\in\mathring H^\sigma(\Omega)$ with support in $\overline B$ one has
$$
I_{n,\sigma,\Omega}[u] \geq (1-\epsilon) I_{n,\sigma,\R^n_+}[\tilde u] - C \int_\Omega |u|^2\,\ud x \,.
$$
Here $\tilde u\in\mathring H^\sigma(\R^n_+)$ and is obtained from $u$ by a change of variables with Jacobian equal to one.
\item There are $\delta>0$ and $C<\infty$ such that for every ball $B$ of radius $\delta$ centered at a point in $\partial\R^n_+$ and for every $v\in\mathring H^\sigma(\R^n_+)$ with support in $\overline B$ one has
$$
I_{n,\sigma,\Omega}[\tilde v] \leq (1+\epsilon) I_{n,\sigma,\R^n_+}[v] + C \int_{\R^n_+} |v|^2\,\ud x \,.
$$
Here $\tilde v\in\mathring H^\sigma(\Omega)$ and is obtained from $v$ by a change of variables with Jacobian equal to one.
\end{enumerate}
\end{lemma}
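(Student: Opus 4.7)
My plan is to straighten $\partial\Omega$ locally by a $C^1$ diffeomorphism $\Phi$ whose Jacobian equals one and whose derivative differs from the identity by an amount that can be made arbitrarily small by choosing the ball small enough. Fix $x_0\in\partial\Omega$ and, after a rotation, write $\partial\Omega$ locally as the graph $\{x_n=\gamma(x')\}$ with $\gamma\in C^1$ and $\nabla\gamma(x_0')=0$; set $\Phi(x',x_n):=(x',x_n-\gamma(x'))$. Then $\det D\Phi\equiv 1$, and $\Phi$ sends $\Omega$ into $\R^n_+$ locally. Since $\nabla\gamma$ is continuous and vanishes at $x_0'$, by shrinking the neighborhood $U$ on which $\Phi$ is considered (say, to a ball of radius $2\delta$ around $x_0$) I can arrange $\|D\Phi-I\|_\infty\le\eta$ for $\eta$ as small as I wish. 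The mean value theorem applied to $\Phi-\mathrm{id}$ then yields the bi-Lipschitz kernel estimate $(1-C\eta)|x-y|\le|\Phi(x)-\Phi(y)|\le(1+C\eta)|x-y|$ on $U$, whence $|\Phi(x)-\Phi(y)|^{-n-2\sigma}=(1\pm\epsilon)|x-y|^{-n-2\sigma}$ once $\eta$ is small.

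To prove (i), fix $\epsilon>0$, choose $\delta$ accordingly, let $B$ be the ball of radius $\delta$ centered at any $x_0\in\partial\Omega$, and let $U$ be the concentric ball of radius $2\delta$. For $u\in\mathring H^\sigma(\Omega)$ with $\supp u\subset\overline B$, set $\tilde u:=u\circ\Phi^{-1}$ and extend by zero to $\R^n_+$; all $L^p$ norms of $u$ and $\tilde u$ agree since $\det D\Phi\equiv 1$. Writing $V:=\Phi(U\cap\Omega)\subset\R^n_+$, both $I_{n,\sigma,\Omega}[u]$ and $I_{n,\sigma,\R^n_+}[\tilde u]$ split into a double integral over $(U\cap\Omega)^2$ (respectively $V^2$) plus a non-negative cross term $2\int u(x)^2\psi(x)\,\ud x$ (resp.\ $2\int\tilde u(\tilde x)^2\tilde\psi(\tilde x)\,\ud\tilde x$), where $\psi(x):=\int_{\Omega\setminus U}|x-y|^{-n-2\sigma}\,\ud y$ and $\tilde\psi$ is the analogous integral over $\R^n_+\setminus V$. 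A single change of variables combined with the kernel estimate relates the diagonal integrals and produces the factor $1-\epsilon$. Because $\supp u$ lies at distance $\ge\delta$ from $\partial U$ and $\Phi$ is bi-Lipschitz, $\supp\tilde u$ lies at distance $\ge\delta/2$ from $\R^n_+\setminus V$, so $\tilde\psi\le C\delta^{-2\sigma}$ on $\supp\tilde u$; this bounds the $\tilde u$-cross term by $C\int u^2\,\ud x$. Dropping the non-negative $u$-cross term then gives (i). Part (ii) is symmetric: given $B$ centered at $\bar x_0\in\partial\R^n_+$, first translate $v$ horizontally so that it is supported near $\Phi(x_0)$ (this preserves both $I_{n,\sigma,\R^n_+}[v]$ and $\int v^2$ by translation invariance of $\R^n_+$), set $\tilde v:=v\circ\Phi$ on $\Omega$, and run the same argument with the reversed kernel estimate, now dropping the non-negative cross term on the $\R^n_+$ side and bounding the cross term on the $\Omega$ side by the same distance argument.

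The main (and rather mild) obstacle is obtaining uniform constants $\delta$ and $C$ that work for every admissible $B$, not just one preferred center. For part (i) this follows from compactness of $\partial\Omega$ (a compact $C^1$ hypersurface because $\Omega$ is bounded with $C^1$ boundary): $\partial\Omega$ admits a finite covering by graph charts with uniformly continuous graphing functions $\gamma_i$, so the smallness threshold for $\eta$ needed to ensure the kernel estimate, and hence the choice of $\delta$, can be taken uniformly in the center of $B$. For part (ii), uniformity across the non-compact boundary $\partial\R^n_+$ is reduced to a single fixed center by the translation invariance of $\R^n_+$, as above.
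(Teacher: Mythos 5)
Your proposal is correct and follows essentially the same route as the paper: the same boundary-straightening diffeomorphism $\Phi(x',x_n)=(x',x_n-\gamma(x'))$ with unit Jacobian, the same $(1\pm\epsilon)$-kernel comparison via smallness of $\nabla\gamma$, and the same decomposition of $I_{n,\sigma,\cdot}$ into a diagonal double integral plus a cross term which is either dropped (when it has the favorable sign) or bounded by $C\delta^{-2\sigma}\int u^2$ using the distance of the support to the complement. The only cosmetic differences are that you derive the kernel comparison via the mean value theorem rather than the paper's explicit computation with the difference quotient of $\gamma$, and that you make the translation-invariance reduction in part (ii) explicit, which the paper handles by a brief remark about adjusting $\delta$.
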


\begin{proof}
Let $B^*$ and $D^*$ be balls of radius $2\delta$ in $\R^n$ and $\R^{n-1}$, respectively, centered at the origin. After a translation and a rotation we may assume that
$$
\partial \Omega \cap B^* = \{ (x',x_n) :\ x'\in D^* \,,\ x_n =\phi(x')\} \cap B^*
$$
and
$$
\Omega \cap B^* = \{ (x',x_n) :\ x'\in D^* \,,\ x_n >\phi(x')\} \cap B^* \,,
$$
where $\phi: D^*\to\R$ is a $C^1$ function with $\phi(0)=0$ and $\nabla\phi(0)=0$. We change variables
$$
\xi_j = \Phi_j(x) = x_j \quad \text{if}\ 1\leq j\leq n-1
\qquad\text{and}\qquad
\xi_n = \Phi_n(x) = x_n - \phi(x') \,.
$$
Note that the Jacobian of $\Phi$ is equal to one. 

Given $\epsilon>0$ we choose $\delta>0$ so small that
$$
\sup_{x'\in D^*} |\nabla\phi(x')| + \sup_{x'\in D^*} |\nabla\phi(x')|^2 \leq (1-\epsilon)^{-\frac{2}{n+2\sigma}} - 1 \,,
\quad
\sup_{x'\in D^*} |\nabla\phi(x')| \leq 1- (1+\epsilon)^{-\frac{2}{n+2\sigma}} \,.
$$
This is possible since $\nabla\phi(0)=0$. (More precisely, the inequality should hold for any point on the boundary and any $\phi$ corresponding to that point. This is possible since the boundary is compact and all $\nabla\phi$'s can be controlled by a common modulus of continuity.)

We claim that we have
\begin{align*}
\left| \frac{1}{|\Phi^{-1}(\xi)-\Phi^{-1}(\eta)|^{n+2\sigma}} - \frac{1}{|\xi-\eta|^{n+2\sigma}} \right| \leq \frac{\epsilon}{|\xi-\eta|^{n+2\sigma}}
\qquad\text{for all}\ \xi',\eta'\in D^* \,.
\end{align*}
In fact, this is equivalent to
$$
\left| \left( \frac{(\xi'-\eta')^2 + (\xi_n-\eta_n)^2}{(\xi'-\eta')^2 + (\xi_n+\phi(\xi') -\eta_n - \phi(\eta'))^2} \right)^\frac{n+2\sigma}{2} - 1 \right| \leq \epsilon \,,
$$
which is the same as
$$
(1+\epsilon)^{-\frac{2}{n+2\sigma}} \leq \frac{(\xi'-\eta')^2 + (\xi_n+\phi(\xi') -\eta_n - \phi(\eta'))^2}{(\xi'-\eta')^2 + (\xi_n-\eta_n)^2} \leq (1-\epsilon)^{-\frac{2}{n+2\sigma}}
$$
or as
$$
-1+ (1+\epsilon)^{-\frac{2}{n+2\sigma}} \leq \frac{2(\xi_n-\eta_n)(\phi(\xi')-\phi(\eta')) + (\phi(\xi')-\phi(\eta'))^2}{(\xi'-\eta')^2 + (\xi_n-\eta_n)^2} \leq (1-\epsilon)^{-\frac{2}{n+2\sigma}} - 1.
$$
Since $|\phi(\xi')-\phi(\eta')|\leq (\sup_D |\nabla\phi|) |\xi'-\eta'|$, the latter inequality follows immediately from the choice of $\delta$.

Finally, given $u\in\mathring H^\sigma(\Omega)$ with support in $\overline B$ we define $\tilde u \in\mathring H^\sigma(\R^n_+)$ by
$$
\tilde u(\xi) = u(\Phi^{-1}(\xi)) \qquad\text{if}\ \xi\in\Phi(\Omega\cap B^*)
$$
and $\tilde u(\xi) = 0$ if $\xi\in\R^n_+\setminus\Phi(\Omega\cap B^*)$. Then
\begin{align*}
I_{n,\sigma,\Omega}[u] & \geq \iint_{(\Omega\cap B^*)\times(\Omega\cap B^*)} \frac{(u(x)-u(y))^2}{|x-y|^{n+2\sigma}}\,\ud x\,\ud y \\
& = \iint_{\Phi(\Omega\cap B^*)\times\Phi(\Omega\cap B^*)} \frac{(\tilde u(\xi)-\tilde u(\eta))^2}{|\Phi^{-1}(\xi) -\Phi^{-1}(\eta)|^{n+2\sigma}}\,\ud \xi\,\ud \eta \\
& \geq (1-\epsilon) \iint_{\Phi(\Omega\cap B^*)\times\Phi(\Omega\cap B^*)} \frac{(\tilde u(\xi) -\tilde u(\eta))^2}{|\xi-\eta|^{n+2\sigma}} \,\ud \xi\,\ud \eta \\
& = (1-\epsilon) \iint_{\R^n_+\times\R^n_+} \frac{(\tilde u(\xi) -\tilde u(\eta))^2}{|\xi-\eta|^{n+2\sigma}} \,\ud \xi\,\ud \eta \\
& \qquad - 2 \int_{\Phi(\Omega\cap B^*)} \tilde u(\xi)^2 \int_{\R^n_+\setminus \Phi(\Omega\cap B^*)} \frac{\ud \eta}{|\xi-\eta|^{n+2\sigma}} \,\ud \xi \\
& \geq (1-\epsilon) I_{n,\sigma,\R^n_+}[\tilde u] - C \int_\Omega u^2\,\ud x \,.
\end{align*}
In the last inequality we use the fact that $\tilde u$ has support in $\Phi(\Omega\cap B)$ and that
$$
2 \int_{\R^n_+\setminus \Phi(\Omega\cap B^*)} \frac{\ud \eta}{|\xi-\eta|^{n+2\sigma}} \leq C
\qquad \text{for all}\ \xi\in\Phi(\Omega\cap B) \,.
$$
This proves the first part of the lemma.

For the proof of the second part, given $v\in\mathring H^\sigma(\R^n_+)$ with support in $\Phi(\Omega\cap B)$, we define $\tilde v\in\mathring H^\sigma(\Omega)$ by
$$
\tilde v(x) = v(\Phi(x)) \qquad\text{if}\ x\in\Omega\cap B^*
$$
and $\tilde v(x)=0$ if $x\in\Omega\setminus B^*$. Then
\begin{align*}
I_{n,\sigma,\Omega}[\tilde v] & = \iint_{(\Omega\cap B^*)\times(\Omega\cap B^*)} \frac{(\tilde v(x)-\tilde v(y))^2}{|x-y|^{n+2\sigma}}\,\ud x\,\ud y + 2 \int_{\Omega\cap B^*} \tilde v(x)^2 \int_{\Omega\setminus B^*} \frac{\ud y}{|x-y|^{n+2\sigma}}\,\ud x \\
& \leq \iint_{\Phi(\Omega\cap B^*)\times\Phi(\Omega\cap B^*)} \frac{(v(\xi)-v(\eta))^2}{|\Phi^{-1}(\xi)-\Phi^{-1}(\eta)|^{n+2\sigma}}\,\ud\xi\,\ud\xi + C \int_{\R^n_+} v(\xi)^2 \,\ud \xi \\
& \leq (1+\epsilon) \iint_{\Phi(\Omega\cap B^*)\times\Phi(\Omega\cap B^*)} \frac{(v(\xi)-v(\eta))^2}{|\xi-\eta|^{n+2\sigma}}\,\ud\xi\,\ud\xi + C \int_{\R^n_+} v(\xi)^2 \,\ud \xi \\
& \leq (1+\epsilon) I_{n,\sigma,\R^n_+}[v] + C \int_{\R^n_+} v(\xi)^2 \,\ud \xi \,.
\end{align*}
In the first inequality we used the fact that
$$
2 \int_{\Omega\setminus B^*} \frac{\ud y}{|x-y|^{n+2\sigma}} \leq C
\qquad\text{for all}\ x\in\Omega\cap B \,.
$$
After replacing $\delta$ by $\delta'$ such that $\Phi(\Omega\cap B)$ contains the ball with radius $\delta'$ centered at the origin, we obtain the lemma.
\end{proof}

\begin{proof}[Proof of Proposition \ref{almostineq}]
Given $\epsilon>0$ let $\delta>0$ be as in the first part of Lemma \ref{changeofvar}. We cover the boundary by finitely many balls of radius $\delta$ and choose real Lipschitz functions $\chi_0,\ldots,\chi_N$ such that
$$
\chi_0^2 + \chi_1^2 + \ldots + \chi_N^2 \equiv 1
\qquad\text{in}\ \Omega
$$
and, for $j=1,\ldots, N$, $\chi_j$ has support in one of the balls from the covering and the support of $\chi_0$ is contained in $\Omega$. A simple computation shows that
$$
I_{n,\sigma,\Omega}[u] = \sum_{j=0}^N I_{n,\sigma,\Omega}[\chi_j u] - \sum_{j=0}^N \iint_{\Omega\times\Omega} u(x) \frac{(\chi_j(x)-\chi_j(y))^2}{|x-y|^{n+2\sigma}} u(y) \,\ud x\,\ud y \,.
$$
Since the $\chi_j$ are Lipschitz, so that the singularity of the integral kernel is mitigated, it follows from the Schur test that there is a $C<\infty$ such that
$$
\sum_{j=0}^N \iint_{\Omega\times\Omega} u(x) \frac{(\chi_j(x)-\chi_j(y))^2}{|x-y|^{n+2\sigma}} u(y) \,\ud x\,\ud y \leq C \int_\Omega u^2\,\ud x \,.
$$
Let $j=1,\ldots,N$. Since the function $u_j =\chi_j u$ is supported in a ball of radius $\delta>0$ we can apply Lemma \ref{changeofvar} and we obtain
\begin{align*}
I_{n,\sigma,\Omega}[\chi_j u] & \geq (1-\epsilon) I_{n,\sigma,\R^n_+}[\tilde u_j] - C'  \int_\Omega \chi_j^2 u^2\,\ud x \\
& \geq (1-\epsilon) S_{n,\sigma}(\R^n_+) \left( \int_{\R^n_+} |\tilde u_j|^\frac{2n}{n-2\sigma}\,\ud \xi \right)^\frac{n-2\sigma}{n} 
-C'  \int_\Omega \chi_j^2 u^2\,\ud x \\
& = (1-\epsilon) S_{n,\sigma}(\R^n_+) \left( \int_{\Omega} |\chi_j u|^\frac{2n}{n-2\sigma}\,\ud x \right)^\frac{n-2\sigma}{n} -C'  \int_\Omega \chi_j^2 u^2\,\ud x\,.
\end{align*}
In the last identity we used the fact that the change of variables has Jacobian equal to one.

Finally, for $j=0$ we write
$$
I_{n,\sigma,\Omega}[\chi_0 u] = \iint_{\R^n\times\R^n} \frac{(\chi_0(x) u(x) - \chi_0(y)u(y))^2}{|x-y|^{n+2\sigma}}\,\ud x\,\ud y + \int_{\Omega} V_\Omega(x) \chi_0(x)^2 u(x)^2
$$
with $V_\Omega$ from \eqref{eq:potential}. Since $\chi_0$ is supported away from the boundary, there is a $C'<\infty$ such that
$$
V_\Omega(x) \chi_0(x)^2 \geq -C''
\qquad\text{for all}\ x\in\Omega \,.
$$
Thus,
$$
I_{n,\sigma,\Omega}[\chi_0 u] \geq S_{n,\sigma}(\R^n) \left( \int_{\Omega} |\chi_0 u|^\frac{2n}{n-2\sigma}\,\ud x \right)^\frac{n-2\sigma}{n} - C'' \int_\Omega \chi_0^2 u^2\,\ud x \,.
$$

Since, by Lemma \ref{bindingsimple}, $S_{n,\sigma}(\R^n) \geq S_{n,\sigma}(\R^n_+)$, we conclude that
$$
I_{n,\sigma,\Omega}[u] \geq (1-\epsilon) S_{n,\sigma}(\R^n_+) \sum_{j=0}^N \left( \int_\Omega |\chi_j u|^\frac{2n}{n-2\sigma}\,\ud x \right)^\frac{n-2\sigma}{n} - (C+ \max\{C',C''\}) \int_\Omega u^2\,\ud x \,.
$$
Since
\begin{align*}
\sum_{j=0}^N \left( \int_\Omega |\chi_j u|^\frac{2n}{n-2\sigma}\,\ud x \right)^\frac{n-2\sigma}{n} & = \sum_{j=0}^N \left\| \chi_j^2 u^2 \right\|_\frac{n}{n-2\sigma} \geq \left\| \sum_{j=0}^N \chi_j^2 u^2 \right\|_\frac{n}{n-2\sigma}
 = \left( \int_\Omega | u|^\frac{2n}{n-2\sigma}\,\ud x \right)^\frac{n-2\sigma}{n},
\end{align*}
we have shown the proposition.
\end{proof}

Finally, we sketch the

\begin{proof}[Proof of Proposition \ref{thresholdbdd}. Part 2]
We show that $S^{**}_{n,\sigma}(\Omega)\leq S_{n,\sigma}(\R^n)$. We may assume that $S^{**}_{n,\sigma}(\Omega)>0$, for otherwise there is nothing to show. After a translation and rotation we may assume that $0\in\partial\Omega$ and that the outward normal to $\partial\Omega$ at $0$ is $(0,\ldots,0,-1)$. Let $\epsilon\in (0,1)$ and choose $\delta>0$ as in the second part of Lemma \ref{changeofvar}. Let $0\not\equiv v\in\mathring H^\sigma(\R^n_+)$ have compact support and set $v_\lambda(x):=\lambda^\frac{n-2\sigma}{2}v(\lambda x)$.  Then for all sufficiently large $\lambda$ (depending on $\delta$) there is a $\tilde v_\lambda\in\mathring H^\sigma(\Omega)$ such that
$$
I_{n,\sigma,\Omega}[\tilde v_\lambda] \leq (1+\epsilon) I_{n,\sigma,\R^n_+}[v_\lambda] + C \int_{\R^n} v_\lambda^2\,\ud x = (1+\epsilon) I_{n,\sigma,\R^n_+}[v]+ C \lambda^{-2\sigma} \int_{\R^n_+} v^2\,\ud x
$$
and similarly $I_{n,\sigma,\Omega}[\tilde v_\lambda] \geq (1-\epsilon) I_{n,\sigma,\R^n_+}[v]- C \lambda^{-2\sigma} \int_{\R^n_+} v^2\,\ud x$. Since the change of variables has Jacobian equal to one, we have
$$
\int_\Omega |\tilde v_\lambda|^\frac{2n}{n-2\sigma}\,\ud x = \int_{\R^n_+} |v_\lambda|^\frac{2n}{n-2\sigma}\,\ud x = \int_{\R^n_+} |v|^\frac{2n}{n-2\sigma}\,\ud x \,.
$$
We clearly have $\tilde v_\lambda\rightharpoonup 0$ in $\mathring H^\sigma(\Omega)$ and, because of the lower bound on $I_{n,\sigma,\Omega}[\tilde v_\lambda]$, also $\tilde v_\lambda/I_{n,\sigma,\Omega}[\tilde v_\lambda]^{1/2}\rightharpoonup 0$ in $\mathring H^\sigma(\Omega)$. Using this sequence in the definition of $S^{**}_{n,\sigma}(\Omega)$ we obtain
$$
S^{**}_{n,\sigma}(\Omega) \leq \liminf_{\lambda\to\infty} \frac{I_{n,\sigma,\Omega}[\tilde v_\lambda]}{\left(\int_\Omega |\tilde v_\lambda|^\frac{2n}{n-2\sigma}\,\ud x \right)^\frac{n-2\sigma}{n}} \leq
(1+\epsilon) \frac{I_{n,\sigma,\R^n_+}[v]}{\left(\int_{\R^n_+} |v|^\frac{2n}{n-2\sigma}\,\ud x \right)^\frac{n-2\sigma}{n}} \,.
$$
Taking the infimum over all compactly supported $v\in\mathring H^\sigma(\R^n_+)$, which is a dense set, and recalling that $\epsilon>0$ is arbitrary we obtain the claimed inequality.
\end{proof}


\appendix

\section{Some facts about the Sobolev spaces $\mathring H^\sigma(\R^n)$}

We begin with an improvement of the fractional Sobolev inequality due to G\'erard, Meyer and Oru \cite{GeMeOr}. More general ones can be found in \cite{LiD}. For the sake of completeness we include a simple proof following the lines of \cite[Theorem 1.43]{BaChDa}. We use the notation
\[
\left( e^{t\Delta}u \right) (x):=\int_{\R^n} \frac{1}{(4\pi t)^{n/2}}e^{-|x-y|^2/4t}u(y)\,d y.
\]

\begin{lemma}\label{lem:improvedinequality}
Let $0<\sigma<1/2$ if $n=1$ and $0<\sigma<1$ if $n\geq 2$. Then there is a constant $C_{n,\sigma}$ such that for all $u\in \mathring H^\sigma(\R^n)$,
\[
\|u\|_{L^\frac{2n}{n-2\sigma}(\R^n)}\le C_{n,\sigma} \left( \int_{\R^n}\int_{\R^n} \frac{(u(x)-u(y))^2}{|x-y|^{n+2\sigma}}\,d x\,d y\right)^{\frac{n-2\sigma}{2n}} \left(\sup_{t>0}t^{\frac{n-2\sigma}{4}} \|e^{t\Delta}u\|_{L^\infty(\R^n)}\right)^{\frac{2\sigma}{n}} \,.
\]
\end{lemma}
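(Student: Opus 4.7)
The plan is to combine the layer-cake representation of the $L^p$ norm with an appropriate ``heat-semigroup splitting'' of $u$ at each level $\lambda$, and then swap the order of integration to interpret the resulting double integral via the Plancherel formula.

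First, abbreviate $A := \sup_{t>0} t^{(n-2\sigma)/4}\|e^{t\Delta}u\|_{L^\infty}$, $B^2 := I_{n,\sigma,\R^n}[u]$ and $p := 2n/(n-2\sigma)$. Writing
\[
\|u\|_{L^p}^p = p\int_0^\infty \lambda^{p-1}|\{|u|>\lambda\}|\,\ud\lambda,
\]
the idea is to split, for each $\lambda>0$, the function as $u = e^{t\Delta}u + (u - e^{t\Delta}u)$ and to choose $t=t(\lambda)$ so that the first piece is pointwise bounded by $\lambda/2$. By definition of $A$, it suffices to take $t(\lambda) = (2A/\lambda)^{4/(n-2\sigma)}$. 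Then $\{|u|>\lambda\} \subset \{|u-e^{t(\lambda)\Delta}u|>\lambda/2\}$, and Chebyshev's inequality gives
\[
|\{|u|>\lambda\}| \le \frac{4}{\lambda^2}\,\|u-e^{t(\lambda)\Delta}u\|_{L^2}^2.
\]

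Next I would use Plancherel to write $\|u-e^{t\Delta}u\|_{L^2}^2 = \int_{\R^n}(1-e^{-t|\xi|^2})^2|\hat u(\xi)|^2\,\ud\xi$, insert this into the layer-cake integral, and apply Fubini--Tonelli (legitimate since everything is nonnegative) to exchange the order of the $\lambda$- and $\xi$-integrals:
\[
\|u\|_{L^p}^p \le 4p\int_{\R^n}|\hat u(\xi)|^2\left(\int_0^\infty \lambda^{p-3}(1-e^{-t(\lambda)|\xi|^2})^2\,\ud\lambda\right)\ud\xi.
\]
The inner integral is then evaluated by the change of variables $s = t(\lambda)|\xi|^2$, which converts it into $C\,A^{p-2}|\xi|^{2\sigma}\int_0^\infty s^{-\sigma-1}(1-e^{-s})^2\,\ud s$. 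A quick check of exponents confirms that the $|\xi|$-power is exactly $2\sigma$ and the $A$-power is exactly $p-2 = 4\sigma/(n-2\sigma)$, which is precisely the scaling dictated by the target inequality.

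The bulk of the work is then the elementary verification that $\int_0^\infty s^{-\sigma-1}(1-e^{-s})^2\,\ud s < \infty$, which holds for $0<\sigma<2$ (integrability at $0$ uses $(1-e^{-s})^2\sim s^2$ and $\sigma<2$; integrability at $\infty$ uses $\sigma>0$). Collecting constants and recalling $\int_{\R^n}|\xi|^{2\sigma}|\hat u(\xi)|^2\,\ud\xi = a_{n,\sigma}\,I_{n,\sigma,\R^n}[u]/2$ yields $\|u\|_{L^p}^p \le C\,A^{p-2}B^2$, which after extracting roots is the desired inequality with exponents $(n-2\sigma)/n$ on $B$ and $2\sigma/n$ on $A$.

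The main thing to keep an eye on is really just the bookkeeping of exponents in the change of variables (making sure $2(p-2)/\alpha = 2\sigma$ and $(p-2)/\alpha+1 = \sigma+1$ where $\alpha = 4/(n-2\sigma)$), since the rest is routine Fubini plus Chebyshev. There is no genuine obstacle; the subtle point worth recording is that a naive distribution-function bound at this stage gives only weak-$L^p$, and it is the swap of integration order that upgrades the estimate to the strong-$L^p$ form stated in the lemma.
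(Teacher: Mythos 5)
Your proposal is correct and follows essentially the same route as the paper: layer--cake representation, heat-semigroup splitting of $u$ with a level-dependent time $t(\lambda)$ chosen so that $\|e^{t(\lambda)\Delta}u\|_\infty\le\lambda/2$, Chebyshev plus Plancherel, and then Fubini with a change of variables to reduce to $\int|\xi|^{2\sigma}|\hat u(\xi)|^2\,\ud\xi$. The only difference from the paper is cosmetic: the paper normalizes $\sup_t t^{(n-2\sigma)/4}\|e^{t\Delta}u\|_\infty=1$ at the outset while you carry $A$ explicitly through the substitution $s=t(\lambda)|\xi|^2$, but the exponent bookkeeping and the final convergence check of $\int_0^\infty s^{-\sigma-1}(1-e^{-s})^2\,\ud s$ are the same.
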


\begin{proof} 
We abbreviate $q=\frac{2n}{n-2\sigma}$ and $\alpha=-\frac{n-2\sigma}{4}$ and assume, without loss of generality, that
 \[
 \sup_{t>0}t^{-\alpha} \|e^{t\Delta}u\|_{L^\infty(\R^n)}=1.
 \]
Note that
\[
 \int_{\R^n}|u|^q=q \int_{0}^\infty|\{|u|>\lambda\}|\lambda^{q-1}\,d\lambda.
 \]
We bound
 \[
|\{|u|>\lambda\}|\leq |\{|e^{t\Delta}u|>\lambda/2\}|+ |\{|e^{t\Delta}u-u|>\lambda/2\}|
\]
and choose $t=(\lambda/2)^{1/\alpha}$, so that the first term on the right hand side is zero. Thus, by Plancherel's theorem
\begin{align*}
|\{|u|>\lambda\}|&\leq |\{|e^{(\lambda/2)^{1/\alpha}\Delta}u-u|>\lambda/2\}|\\
&\leq (2/\lambda)^2\int_{\R^n }|e^{(\lambda/2)^{1/\alpha}\Delta}u-u|^2\,\,d x\\
&= (2/\lambda)^2\int_{\R^n }(e^{-(\lambda/2)^{1/\alpha}|\xi|^2}-1)^2|\hat u(\xi)|^2\,\,d \xi \,.
\end{align*}
Therefore,
\begin{align*}
 \int_{\R^n}|u|^q&\le q \int_{\R^n}|\hat u(\xi)|^2\,\,d \xi \int_{0}^\infty (2/\lambda)^2(e^{-(\lambda/2)^{1/\alpha}|\xi|^2}-1)^2\lambda^{q-1}\,d\lambda\\
 &= C\int_{\R^n}|\hat u(\xi)|^2|\xi|^{2\sigma}\,\,d \xi
\end{align*}
with
$$
C = q \int_{0}^\infty (2/\lambda)^2(e^{-(\lambda/2)^{1/\alpha}}-1)^2\lambda^{q-1}\,\,d\lambda = \alpha q 2^q \int_0^\infty (e^{-\mu} -1)^2 \mu^{\alpha(q-2)-1}\,d\mu <\infty \,.
$$
Another application of Plancherel's theorem concludes the proof of the inequality.
 \end{proof}

The following lemma shows that on domains of finite measure with sufficiently regular boundary there is no Sobolev inequality for $\sigma<1/2$. The proof uses ideas from \cite{Dy}.

\begin{lemma}\label{nosob}
Let $n\geq 1$, $0<\sigma<1/2$ and let $\Omega\subset\R^n$ be an open set of finite measure such that
$$
\left| \left\{ x\in\Omega:\ \dist(x,\Omega^c)<\delta \right\} \right| = o(\delta^{2\sigma}) 
\qquad\text{as}\ \delta\to 0 \,.
$$
Then
$$
\inf_{0\not\equiv u \in C^1_c(\Omega)} \frac{I_{n,\sigma,\Omega}[u]}{\left( \int_{\Omega} |u|^\frac{2n}{n-2\sigma}\,\ud x \right)^\frac{n-2\sigma}{n}} = 0 \,.
$$
\end{lemma}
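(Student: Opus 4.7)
The strategy is to exploit the fact that for $\sigma<1/2$ the characteristic function $\mathbf{1}_\Omega$ almost lies in the fractional Sobolev space: its Gagliardo seminorm is controlled by the boundary measure of $\Omega$, and is in fact finite under the regularity assumption. Since $\mathbf{1}_\Omega$ is constant on $\Omega$, the quadratic form $I_{n,\sigma,\Omega}$ vanishes on it entirely, so a regularized version should give a trial function with arbitrarily small Rayleigh quotient.

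Concretely, let $\phi\in C^\infty(\R)$ be non-decreasing with $\phi\equiv 0$ on $(-\infty,1/2]$ and $\phi\equiv 1$ on $[1,\infty)$, set $d(x):=\dist(x,\Omega^c)$ and define, for $\delta>0$,
\[
u_\delta(x):=\phi\!\left(\frac{d(x)}{\delta}\right).
\]
This function is Lipschitz with constant $\|\phi'\|_\infty/\delta$, equals $1$ on $\{d\geq\delta\}$, and is supported in $\{d\geq\delta/2\}\Subset\Omega$; by a standard mollification at scale much smaller than $\delta$ we may replace it by a genuine $C^1_c(\Omega)$ function satisfying the same bounds. For the denominator we have $\int_\Omega u_\delta^{2n/(n-2\sigma)}\,\ud x\geq |\Omega|-|A_\delta|$, where $A_\delta:=\{x\in\Omega:d(x)<\delta\}$, which tends to $|\Omega|>0$ since the hypothesis gives $|A_\delta|=o(\delta^{2\sigma})=o(1)$.

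For the numerator, the key observation is that $u_\delta(x)-u_\delta(y)=0$ unless at least one of $x,y$ lies in $A_\delta$, so by symmetry
\[
I_{n,\sigma,\Omega}[u_\delta]\leq 2\int_{A_\delta}\int_\Omega \frac{(u_\delta(x)-u_\delta(y))^2}{|x-y|^{n+2\sigma}}\,\ud y\,\ud x.
\]
Splitting the inner integral at $|x-y|=\delta$ and using the Lipschitz bound $|u_\delta(x)-u_\delta(y)|\leq(\|\phi'\|_\infty/\delta)|x-y|$ on $\{|x-y|\leq\delta\}$ and $|u_\delta(x)-u_\delta(y)|\leq 1$ on $\{|x-y|>\delta\}$, each piece gives a contribution of order $\delta^{-2\sigma}$ (the first via $\int_0^\delta r^{1-2\sigma}\,dr$, the second via $\int_\delta^\infty r^{-1-2\sigma}\,dr$, using polar coordinates and $2\sigma<1$). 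Hence
\[
I_{n,\sigma,\Omega}[u_\delta]\leq C\,\delta^{-2\sigma}|A_\delta|=o(1),
\]
by the standing hypothesis $|A_\delta|=o(\delta^{2\sigma})$.

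Combining the two estimates shows that the Rayleigh quotient of $u_\delta$ tends to zero as $\delta\to 0$, which gives the claim. The only slightly delicate point in the plan is the mollification step, where one must verify that smoothing $u_\delta$ at a scale $\eta\ll\delta$ preserves both the lower bound on $\|u_\delta\|_{L^{2n/(n-2\sigma)}}$ and the Lipschitz/amplitude bounds used above; this is routine because the mollifier is supported inside $\{d\geq\delta/4\}$ once $\eta$ is small enough, so the support remains in $\Omega$, and convolution with a smooth kernel does not increase Lipschitz constants or sup-norms.
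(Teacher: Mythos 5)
Your proof is correct and follows essentially the same approach as the paper: take a cutoff that is $1$ on $\{d\geq\delta\}$, vanishes near $\partial\Omega$, and is $\delta^{-1}$-Lipschitz, reduce by symmetry to integrating over $A_\delta=\{d<\delta\}$, and split the inner integral at $|x-y|=\delta$ to get $I_{n,\sigma,\Omega}[u_\delta]\lesssim\delta^{-2\sigma}|A_\delta|\to 0$. Your explicit construction $\phi(d/\delta)$ plus mollification is slightly more detailed than the paper's, which simply postulates a $C^1_c$ cutoff with those properties; the one small slip is that neither radial integral actually uses $2\sigma<1$ (they converge for all $\sigma\in(0,1)$), so that parenthetical is superfluous.
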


Note that, if $\Omega$ is bounded Lipschitz, then $\left| \left\{ x\in\Omega:\ \dist(x,\Omega^c)<\delta \right\} \right| \lesssim \delta$ for $\delta$ sufficiently small and therefore $S_{n,\sigma}(\Omega)=0$ for $\sigma<1/2$.

\begin{proof}
Let $u_\delta\in C^1_c(\Omega)$ such that $u_\delta(x)=1$ if $x\in\Omega$ and $\dist(x,\Omega^c)\geq\delta$, $0\leq u_\delta\leq 1$ and $|\nabla u_\delta|\lesssim \delta^{-1}$. Then
\begin{align*}
I_{n,\sigma,\Omega}[u_\delta] & \leq 2 \int_{\{x\in\Omega:\ \dist(x,\Omega^c)<\delta\}} \int_\Omega \frac{(u_\delta(x)-u_\delta(y))^2}{|x-y|^{n+2\sigma}}\,\ud y\,\ud x \\
& = 2\int_{\{x\in\Omega:\ \dist(x,\Omega^c)<\delta\}} \left( I(x)+ II(x) \right)\,\ud x
\end{align*}
where
\begin{align*}
I(x) & := \int_{\{y\in\Omega:\ |x-y|<\delta\}} \frac{(u_\delta(x)-u_\delta(y))^2}{|x-y|^{n+2\sigma}}\,\ud y \\
& \lesssim \frac{1}{\delta^2} \int_{\{y\in\Omega:\ |x-y|<\delta\}} \frac{\ud y}{|x-y|^{n+2\sigma-2 }} \\
& \lesssim \frac{1}{\delta^{2\sigma}}
\end{align*}
and
\begin{align*}
II(x) & := \int_{\{y\in\Omega:\ |x-y|\geq \delta\}} \frac{(u_\delta(x)-u_\delta(y))^2}{|x-y|^{n+2\sigma}}\,\ud y \\
& \leq \int_{\{y\in\Omega:\ |x-y|<\delta\}} \frac{\ud y}{|x-y|^{n+2\sigma}} \\
& \lesssim \frac{1}{\delta^{2\sigma}} \,.
\end{align*}
Thus,
$$
I_{n,\sigma,\Omega}[u_\delta] \lesssim \delta^{-2\sigma} \left| \left\{ x\in\Omega:\ \dist(x,\Omega^c)<\delta \right\} \right| \to 0
\qquad\text{as}\ \delta\to 0 \,.
$$
Since $\int_\Omega u_\delta^\frac{2n}{n-2\sigma}\,\ud x \to |\Omega|$, we obtain the lemma.
\end{proof}



\bibliographystyle{amsalpha}

\end{document}